\newcommand{\RNum}[1]{\uppercase\expandafter{\romannumeral #1\relax}}
\newcommand{\la}{\lambda}
\DeclarePairedDelimiter\floor{\lfloor}{\rfloor}
\newtheorem{thm}{Theorem}
\theoremstyle{definition}
\newtheorem{lem}[thm]{Lemma}
\newtheorem{prop}[thm]{Proposition}
\newtheorem{rem}[thm]{Remark}
\newtheorem{ex}[thm]{Example}
\newcommand{\ols}[1]{\mskip0\thinmuskip\overline{\mskip-.5\thinmuskip {#1} \mskip-2.5\thinmuskip}\mskip0\thinmuskip}
\numberwithin{thm}{section}
\title[bar-cores, CSYDs, and doubled distinct cores]
{Results on bar-core partitions, core shifted Young diagrams, and doubled distinct cores}
\author{Hyunsoo Cho}
\address{Hyunsoo Cho, Institute of Mathematical Sciences, Ewha Womans University, Seoul, Republic of Korea}
\email{hyunsoo@ewha.ac.kr}
\author{JiSun Huh}
\address{JiSun Huh, Department of Mathematics, Ajou University, Suwon, Republic of Korea}
\email{hyunyjia@ajou.ac.kr}
\author{Hayan Nam}
\address{Hayan Nam, Department of Mathematics, Duksung Women's University,  Seoul, Republic of Korea}
\email{hnam@duksung.ac.kr}
\author{Jaebum Sohn}
\address{Jaebum Sohn, Department of Mathematics, Yonsei University, Seoul, Republic of Korea}
\email{jsohn@yonsei.ac.kr}
\begin{document}

\begin{abstract}
Simultaneous bar-cores, core shifted Young diagrams (or CSYDs), and doubled distinct cores have been studied since Morris and Yaseen introduced the concept of bar-cores. In this paper, our goal is to give a formula for the number of these core partitions on $(s,t)$-cores and $(s,s+d,s+2d)$-cores for the remaining cases that are not covered yet.
In order to achieve this goal, we observe a characterization of $\overline{s}$-core partitions to obtain characterizations of doubled distinct $s$-core partitions and $s$-CSYDs.
By using them, we construct $NE$ lattice path interpretations of these core partitions on $(s,t)$-cores. Also, we give free Motzkin path interpretations of these core partitions on $(s,s+d,s+2d)$-cores.

\end{abstract}

\maketitle
\sloppy


\section{Introduction}
A \emph{partition} $\la = (\la_1, \la_2, \ldots, \la_{\ell})$ of $n$ is a non-increasing positive integer sequence whose sum of the parts $\la_i$ is $n$. We denote that $\la_i \in \la$ and visualize a partition $\la$ with the \emph{Young diagram} $D(\la)$. For a partition $\la$, $\la'$ is called the \emph{conjugate} of $\la$ if $D(\la')$ is the reflection of $D(\la)$ across the main diagonal, and $\la$ is called \emph{self-conjugate} if $\la=\la'$. An $(i,j)$-box of $D(\la)$ is the box at the $i$th row from the top and the $j$th column from the left. The \emph{hook length} of an $(i,j)$-box, denoted by $h_{i,j}(\la)$, is the total number of boxes on the right and the below of the $(i,j)$-box and itself, and the \emph{hook set} $\mathcal{H}(\la)$ of $\la$ is the set of hook lengths of $\la$. We say that a partition $\la$ is an \emph{$s$-core} if $ks\notin\mathcal{H}(\la)$ for all $k \in \mathbb{N}$ and is an \emph{$(s_1, s_2, \dots, s_p)$-core} if it is an $s_i$-core for all $i=1,2,\dots,p$.
Figure \ref{fig:ex} illustrates the Young diagram of a partition and a hook length.

\begin{figure}[ht!]
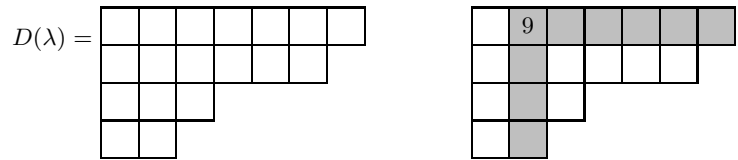

\centering
\small{
$D(\la)=$~\begin{ytableau}
~&~&~&~&~&~&~ \\
~&~&~&~&~&~ \\
~&~&~ \\
~&~
\end{ytableau}
\qquad \qquad
\begin{ytableau}
~&*(gray!50)9&*(gray!50)&*(gray!50)&*(gray!50)&*(gray!50)&*(gray!50) \\
~&*(gray!50)&~&~&~&~ \\
~&*(gray!50)&~ \\
~&*(gray!50)
\end{ytableau}}

\caption{The Young diagram of the partition $\la=(7,6,3,2)$ and a hook length $h_{1,2}(\la)=9$.} \label{fig:ex}
\end{figure}

There have been active research on the number of simultaneous core partitions and self-conjugate simultaneous core partitions since Anderson \cite{Anderson} counted the number of $(s,t)$-core partitions for coprime $s$ and $t$. For more information, see \cite{AL,FMS,Wang} for example. 
In this paper, we investigate the three different types of core partitions, which are called bar-core partitions, core shifted Young diagrams, and doubled distinct core partitions. Researchers have been studied them independently but they are inevitably related to each other.

We first give the definitions of the three objects that we only deal with under the condition that the partition is \emph{strict}, which means that each part is all distinct.

For a strict partition $\la=(\la_1, \la_2, \ldots, \la_{\ell})$, 
an element of the set 
\[
\{\la_i+\la_{i+1}, \la_i+\la_{i+2}, \dots, \la_i+\la_{\ell} \} \cup \left( \{ \la_{i}, \la_{i}-1, \dots, 1 \} \setminus \{\la_{i}-\la_{i+1}, \dots, \la_{i}-\la_{\ell}\} \right)
\]
is called a \emph{bar length} in the $i$th row. A strict partition $\la$ is called an \emph{$\overline{s}$-core} (\emph{$s$-bar-core}) if $s$ is not a bar length in any row in $\la$. For example, the sets of bar lengths in every row of $\la=(7,6,3,2)$ are $\{13,10,9,7,6,3,2\}$, $\{9,8,6,5,2,1\}$, $\{5,3,2\}$, and $\{2,1\}$. Thus, $\la$ is an $\overline{s}$-core partition for $s=4,11,12$, or $s\geq 14$.

The \emph{shifted Young diagram} $S(\la)$ of a strict partition $\la$ is obtained from $D(\la)$ by shifting the $i$th row to the right by $i-1$ boxes for each $i$.
The \emph{shifted hook length} $h^*_{i,j}(\la)$ of an $(i,j)$-box in $S(\la)$ is the number of boxes on its right, below and itself, and the boxes on the $(j+1)$st row if exists. For example, the left diagram in Figure \ref{fig:bar} shows the shifted Young diagram of the partition $(7,6,3,2)$ with the shifted hook lengths. The shifted hook set $\mathcal{H}^*(\la)$ is the set of shifted hook lengths in $S(\la)$. A shifted Young diagram $S(\la)$ is called an \emph{$s$-core shifted Young diagram}, shortly $s$-CSYD, if none of the shifted hook lengths of $S(\la)$ are divisible by $s$. Sometimes we say that ``$\la$ is an $s$-CSYD'' instead of ``$S(\la)$ is an $s$-CSYD''.

Given a strict partition $\la=(\la_1, \la_2, \ldots, \la_{\ell})$, the \emph{doubled distinct partition} of $\la$, denoted by $\la \la$, is a partition whose Young diagram $D(\la \la)$ is defined by adding $\la_i$ boxes to the $(i-1)$st column of $S(\la)$.
In other words, the Frobenius symbol of the doubled distinct partition $\la\la$ is given by
\[
\begin{pmatrix}
\la_1 & \la_2 & \cdots &\la_{\ell}\\
\la_1 -1 & \la_2 -1 & \cdots & \la_{\ell} -1
\end{pmatrix}.
\]
The doubled distinct partition $\la\la$ is called a \emph{doubled distinct $s$-core} if none of the hook lengths are divisible by $s$.
Note that the hook set of $D(\la\la)$ that is located on the right of the main diagonal is the same as $\mathcal{H}^*(\la)$. Indeed, the hook lengths on the $(\ell+1)$st column of $D(\la\la)$ are the parts of $\la$ and the deletion of this column from $D(\la\la)$ gives a self-conjugate partition. See Figure \ref{fig:bar} for example.


\begin{figure}[ht!]
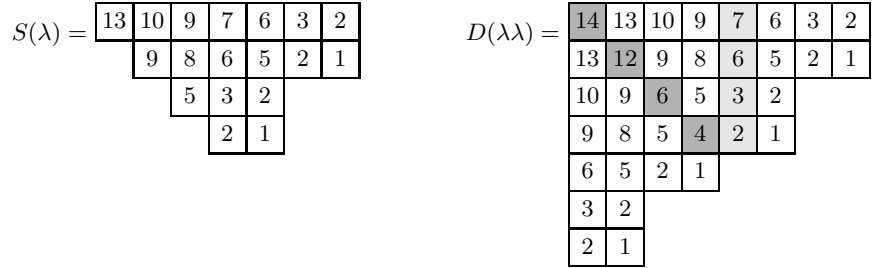

{\small
$S(\la)=~$\begin{ytableau}
13&10&9&7&6&3&2 \\
\none&9&8&6&5&2&1 \\
\none&\none&5&3&2 \\
\none&\none&\none&2&1 \\
\end{ytableau}
\qquad \qquad
$D(\la\la)=~$\begin{ytableau}
*(gray!60)14&13&10&9&*(gray!20)7&6&3&2 \\
13&*(gray!60)12&9&8&*(gray!20)6&5&2&1 \\
10&9&*(gray!60)6&5&*(gray!20)3&2 \\
9&8&5&*(gray!60)4&*(gray!20)2&1 \\
6&5&2&1 \\
3&2 \\
2&1
\end{ytableau}}
\caption{The shifted Young diagram $S(\la)$ with the shifted hook lengths and the doubled distinct partition $\la\la$ with the hook lengths for the strict partition $\la=(7,6,3,2)$.}\label{fig:bar}
\end{figure}

We extend the definition of simultaneous core partitions to bar-core partitions and CSYDs. We use the following notations for the variety sets of core partitions,
\begin{align*}
\mathcal{SC}_{(s_1, s_2, \dots, s_p)} &: \text{~the set of self-conjugate $(s_1, s_2, \dots, s_p)$-cores},\\ 
\mathcal{BC}_{(s_1, s_2, \dots, s_p)} &: \text{~the set of  $(\overline{s_1}, \overline{s_2},\dots, \overline{s_p})$-cores},\\ 
\mathcal{CS}_{(s_1, s_2, \dots, s_p)} &: \text{~the set of $(s_1, s_2, \dots, s_p)$-CSYDs},\\ 
\mathcal{DD}_{(s_1, s_2, \dots, s_p)} &: \text{~the set of doubled distinct $(s_1, s_2, \dots, s_p)$-cores}. 
\end{align*}

There are a couple of results on counting the number of simultaneous core partitions of the three objects, bar-cores, CSYDs, and doubled distinct cores. Bessenrodt and Olsson \cite{BO} adopted the Yin-Yang diagram to count the number of $(\ols{s\phantom{t}},\overline{t})$-core partitions for odd numbers $s$ and $t$, Wang and Yang \cite{WY} counted the same object when $s$ and $t$ are in different parity, and Ding \cite{Ding} counted the number of $(s,s+1)$-CSYDs (as far as the authors know these are the only counting results on the three objects known until now). Our main goal is to fill out all the possible results we could get on $(s,t)$-cores and $(s,s+d,s+2d)$-cores for the three objects by constructing some bijections. Additionally, we hire a well-known object so called self-conjugate core partitions to enumerate the number of such core partitions. For instance, bar-core partitions and self-conjugate core partitions are related to each other; Yang \cite[Theorem 1.1]{Yang} constructed a bijection between the set of self-conjugate $s$-cores and that of $\overline{s}$-cores for odd $s$; Gramain, Nath, and Sellers \cite[Theorem 4.12]{GNS} gave a bijection between self-conjugate $(s,t)$-core partitions and $(\ols{s\phantom{t}},\overline{t})$-core partitions, where both $s$ and $t$ are coprime and odd. 

The following theorems are the main results in this paper. 

\begin{thm}\label{thm:main1}
For coprime positive integers $s$ and $t$, the number of doubled distinct $(s,t)$-core partitions is
\[
|\mathcal{DD}_{(s,t)}|=\binom{\lfloor (s-1)/2 \rfloor + \lfloor (t-1)/2 \rfloor}{\lfloor (s-1)/2 \rfloor},
\]
and the number of $(s,t)$-CSYDs is
\[
|\mathcal{CS}_{(s,t)}|=\binom{\floor*{(s-1)/2} + \floor*{t/2} -1}{\floor*{(s-1)/2}}  +\binom{\floor*{s/2} + \floor*{(t-1)/2}-1}{\floor*{(t-1)/2}}.
\]
\end{thm}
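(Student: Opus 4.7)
The plan is to convert each of the simultaneous core conditions on strict partitions into an NE lattice path enumeration in a rectangle, via a characterization derived from the classical description of an $\overline{s}$-core. I would invoke the standard abacus-type characterization: $\la$ is an $\overline{s}$-core iff $s \notin \la$, every part $\la_i > s$ satisfies $\la_i - s \in \la$, and no two parts of $\la$ sum to $s$. Using the relations visible in Figure~\ref{fig:bar} between the hook set of $D(\la\la)$, the shifted hook set $\mathcal{H}^*(\la)$, and the bar-length set, I would deduce parallel conditions: $\la$ is an $s$-CSYD iff no bar length is a multiple of $s$, and $\la\la$ is a doubled distinct $s$-core iff $\la$ is an $s$-CSYD and additionally $2\la_i \neq s$ for all $i$. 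These are strengthenings of the $\overline{s}$-core condition that rule out all multiples of $s$ on one hand, and the extra diagonal hook $2\la_i$ of $D(\la\la)$ on the other.

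Next I would encode strict partitions satisfying the $(s,t)$-conditions as NE lattice paths. Following the Ford--Mai--Sze approach for self-conjugate $(s,t)$-cores, I group the residues in $\{1, \dots, s-1\}$ into pairs $\{r, s-r\}$ and record which pair contains a part of $\la$; the combined $(s,t)$-conditions then identify $\la$ with an NE path in a rectangle whose side lengths count the available pairs modulo $s$ and modulo $t$. For doubled distinct $(s,t)$-cores, excluding the middle parts $s/2$ and $t/2$ fixes the rectangle to have sides $\lfloor (s-1)/2 \rfloor$ and $\lfloor (t-1)/2 \rfloor$, producing the first formula (and matching the known count $|\mathcal{SC}_{(s,t)}|$).

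For $(s,t)$-CSYDs the characterization permits a middle part $s/2$ or $t/2$ when it is an integer; since $s$ and $t$ are coprime, at most one of them is even. Splitting by whether the available middle part is present gives two disjoint families of paths in rectangles of sizes $\lfloor(s-1)/2\rfloor \times (\lfloor t/2\rfloor - 1)$ and $(\lfloor s/2\rfloor - 1) \times \lfloor(t-1)/2\rfloor$, which sum to the second formula; when both $s, t$ are odd the two terms collapse to the doubled distinct count by Pascal's identity, consistent with the two characterizations coinciding in that parity. The main difficulty will be making the CSYD case split clean across all parity combinations of $(s,t)$: verifying that each CSYD appears in exactly one subcase and that the correct dimension shrinks by exactly $1$. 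The remaining ingredients --- the $\overline{s}$-core characterization and the rectangle-to-path bijection --- follow standard patterns.
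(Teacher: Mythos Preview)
Your strategy is the paper's: restrict the known $(\overline{s},\overline{t})$-core $\leftrightarrow$ NE-path bijection by the extra conditions that single out $\mathcal{DD}$ and $\mathcal{CS}$ inside $\mathcal{BC}$, then read off the smaller rectangles. Two places need sharpening before this goes through. First, the useful parts-level characterizations (Propositions~\ref{prop:dd} and~\ref{prop:CSYD}) are: for even $s$, $\la\la$ is a doubled distinct $s$-core iff $\la$ is an $\overline{s}$-core with $s/2\notin\la$, and $S(\la)$ is an $s$-CSYD iff $\la$ is an $\overline{s}$-core with $3s/2\notin\la$. Your versions (``CSYD plus $2\la_i\neq s$'' for DD, and ``no bar length divisible by $s$'' for CSYD) are equivalent but one step removed; isolating the \emph{single} forbidden part is what makes the path restriction transparent. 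Second, for the even/odd parity the paper does not rebuild a Ford--Mai--Sze residue pairing but invokes the Wang--Yang Yin-Yang diagram $B(s,t)$ directly: there $B_{1,(t-1)/2}=-s/2$ and $B_{1,(t-3)/2}=-3s/2$, so $s/2\notin\la$ forces the corresponding path to end in $N$ (yielding the DD rectangle $\frac{t-1}{2}\times\frac{s-2}{2}$), while $s/2\in\la$ together with $3s/2\notin\la$ forces the ending $NE$ (yielding the second CSYD rectangle $\frac{t-3}{2}\times\frac{s-2}{2}$). That is exactly the mechanism behind your ``dimension shrinks by exactly~$1$'' in the middle-part-present case, and it is not visible until the $3s/2$ condition is made explicit. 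One small error to fix: $|\mathcal{DD}_{(s,t)}|$ does \emph{not} match $|\mathcal{SC}_{(s,t)}|$ when one of $s,t$ is even (compare Theorem~\ref{thm:selfbar} with the formula you are proving).
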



\begin{thm}\label{thm:unifying}
Let $s$ and $d$ be coprime positive integers.
\begin{enumerate}
\item[(a)] For odd $s$ and even $d$, 
\begin{align*}
|\mathcal{BC}_{(s,s+d,s+2d)}|&=|\mathcal{CS}_{(s,s+d,s+2d)}|=|\mathcal{DD}_{(s,s+d,s+2d)}|\\
&=\sum_{i=0}^{(s-1)/2}\binom{(s+d-3)/2}{\lfloor i/2 \rfloor}\binom{(s+d-1)/2-\lfloor i/2 \rfloor}{(s-1)/2-i}.
\end{align*}
\item[(b)] For odd numbers $s$ and $d$, 
\begin{align*}
&|\mathcal{BC}_{(s,s+d,s+2d)}|=|\mathcal{CS}_{(s,s+d,s+2d)}|\\
&~~=\sum_{i=0}^{(s-1)/2}\binom{(d-1)/2+i}{\lfloor i/2 \rfloor}\left( \binom{(s+d-2)/2}{(d-1)/2+i} + \binom{(s+d-4)/2}{(d-1)/2+i}\right).
\end{align*}

\item[(c)] For even $s$ and odd $d$,
\begin{align*}
|\mathcal{BC}_{(s,s+d,s+2d)}|=&\sum_{i=0}^{s/2} \binom{(s+d-1)/2}{\lfloor i/2 \rfloor, \lfloor (d+i)/2\rfloor, s/2 -i},  \\
|\mathcal{CS}_{(s,s+d,s+2d)}|=&\sum_{i=0}^{(s-2)/2}\binom{(s+d-3)/2}{\lfloor i/2 \rfloor}\binom{(s+d-3)/2-\lfloor i/2 \rfloor}{(s-2)/2-i}\\
&+\sum_{i=0}^{(s-2)/2}\binom{(s+d-5)/2}{\lfloor i/2 \rfloor}\binom{(s+d-1)/2-\lfloor i/2 \rfloor}{(s-2)/2-i}.
\end{align*}

\item[(d)] For odd $d$, 
\[
|\mathcal{DD}_{(s,s+d,s+2d)}|=\sum_{i=0}^{ \lfloor(s-1)/2\rfloor} \binom{\lfloor (s+d-2)/2\rfloor }{\lfloor i/2 \rfloor, \lfloor (d+i)/2\rfloor,  \lfloor(s-1)/2\rfloor  -i}.
\]
\end{enumerate}
\end{thm}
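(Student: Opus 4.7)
The plan is to establish Theorem~\ref{thm:unifying} by encoding each of the three families of simultaneous cores as a collection of free Motzkin paths of a common length, and then evaluating the resulting counts by a multinomial summation.

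First, I would invoke the beta-set/abacus characterizations of $\overline{s}$-cores, $s$-CSYDs, and doubled distinct $s$-cores developed earlier in the paper. Each characterization encodes the core in question by a subset $A \subseteq \{0, 1, \ldots, s-1\}$ of residues of a distinguished system of beta-numbers, subject to a symmetry of the form $i \in A \iff s - i \notin A$ (with small parity corrections that distinguish bar-cores, CSYDs, and doubled distinct cores according as $s$ is even or odd). Here it is important to read off the characterization for $\mathcal{DD}_{(s)}$ and $\mathcal{CS}_{(s)}$ from the one for $\mathcal{BC}_{(s)}$, since the hook set of $D(\la\la)$ to the right of the diagonal coincides with $\mathcal{H}^*(\la)$, and this is what explains why the three families agree when the parity corrections vanish.

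Next, I would impose the simultaneous conditions of being an $(s+d)$- and $(s+2d)$-core. Translated to the $s$-abacus, these conditions force $A$ to be compatible with the shift-by-$d$ map $i \mapsto i + d \pmod{s}$: after linearly ordering the residues along the orbit of $d$, the membership in $A$ is a single monotone profile. Combining this linear structure with the symmetry condition yields a bijection between each core family and a set of free Motzkin paths of length roughly $(s+d)/2$ whose numbers of up, down, and level steps are prescribed by the parity case at hand. The inner summand in each of (a)--(d) then corresponds to choosing $i$ level steps and distributing the up and down steps under the symmetry, producing the binomial or multinomial coefficient $\binom{\lfloor(s+d-2)/2\rfloor}{\lfloor i/2\rfloor,\, \lfloor(d+i)/2\rfloor,\, \lfloor(s-1)/2\rfloor - i}$ in (d) and the analogous expressions in (a)--(c); summing over $i$ gives the claimed formula.

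The equalities $|\mathcal{BC}_{(s,s+d,s+2d)}| = |\mathcal{CS}_{(s,s+d,s+2d)}|$ in cases (a) and (b), and the further equality with $|\mathcal{DD}_{(s,s+d,s+2d)}|$ in case (a), I would prove by identifying the three path models directly (rather than merely matching the closed-form expressions), using the characterizations above to transfer the symmetry condition of one family to that of another by an explicit involution of Motzkin paths. The hardest part will be the delicate bookkeeping across the four parity cases: the symmetry condition distinguishing the three core types interacts differently with the $s$-abacus according to the parities of $s$ and $d$, so cases (a)--(d) must be treated separately even after the free-Motzkin encoding is established. In particular, verifying that the ``middle'' runner of the $s$-abacus contributes correctly to the symmetry is the step where the four cases genuinely diverge; once this is pinned down, the multinomial summation is a routine evaluation.
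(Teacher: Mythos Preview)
Your high-level strategy---encode the cores as free Motzkin paths and evaluate by a multinomial sum---is exactly what the paper does, but the implementation you sketch diverges from the paper in one essential respect and leaves one genuine gap.

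\textbf{Different abacus.} The paper does not work with the $s$-abacus and the shift $i\mapsto i+d\pmod{s}$. It instead builds a two-dimensional $(\overline{s+d},d)$-abacus: columns are indexed by $j\in\{0,1,\dots,\lfloor(s+d)/2\rfloor\}$ (already folding in the symmetry $h\leftrightarrow -h$), and the datum recorded is not a subset $A$ of residues but an integer-valued \emph{abacus function} $f(j)$ giving the highest occupied row in column $j$. The $\overline{s}$- and $\overline{s+2d}$-conditions then force $|f(j)-f(j-1)|\le 1$, and the path $P_j=(1,f(j)-f(j-1))$ is the free Motzkin path. The distinctions among $\mathcal{BC}$, $\mathcal{CS}$, $\mathcal{DD}$ appear as constraints on the last one or two values of $f$, i.e.\ as forbidden terminal step-patterns such as $\{U\}$, $\{D\}$, $\{FD,DD,U\}$, $\{UU,DD\}$; the paper's Lemma on $|\mathcal{F}(a+b,-b\,;\,\{U\},\cdot)|$ then yields the stated sums directly.

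\textbf{The gap.} Your encoding by a subset $A\subseteq\{0,\dots,s-1\}$ is too coarse: a single $\overline{s}$-core is not determined by which residues it meets, and it is only after the simultaneous conditions are imposed that a finite object emerges. What actually produces the Motzkin path is the \emph{height} information (how many parts lie in each residue class of the middle modulus $s+d$), and your sketch never introduces that data. Relatedly, you say the membership profile along the $d$-orbit becomes ``monotone'', but a monotone $0/1$ profile gives an $NE$ lattice path, not a three-step Motzkin path; the $U/F/D$ trichotomy comes precisely from adjacent-column height differences, which your setup does not record. Finally, the formulas in each parity case are dictated by the specific forbidden-initial and forbidden-terminal step sets, and these must be pinned down exactly (they are not interchangeable across cases); your proposal does not identify them, so the multinomial evaluation cannot be carried out as written.
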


This paper is organized as follows: In Section \ref{sec:2}, we obtain useful propositions involving the three objects which are used frequently throughout this paper. Restricted those objects by the size of partitions, we get the generating functions of $\overline{s}$-cores and $s$-CSYDs for even $s$. 
Section \ref{sec:double} includes connections between the sets of $NE$ lattice paths and the three objects with the condition being $(s,t)$-cores. We consider the Yin-Yang diagrams to find the number of doubled distinct $(s,t)$-core partitions and the number of $(s,t)$-CSYDs by constructing each bijection to a certain set of $NE$ lattice paths. 
In Section \ref{sec:triple}, we describe the relations between free Motzkin paths and the three objects under the condition of being $(s,s+d,s+2d)$-cores by using the $(\overline{s+d},d)$-abacus diagram, the $(\overline{s+d},d)$-abacus function, and their properties. From the bijections we set up, we count the number of each $(s,s+d,s+2d)$-core partitions as a result of the number of corresponding free Motzkin paths.


\section{Properties and generating functions}\label{sec:2}

 We begin this section by showing a property which follows straightly from the definition of the bar lengths and the shifted hook lengths. 

\begin{lem}\label{lem:barhook}
Let $\la = (\la_1, \la_2, \dots, \la_{\ell})$ be a strict partition. The set of bar lengths in the $i$th row of $\la$ is equal to the set of the shifted hook lengths in the $i$th row of $S(\la)$.
\end{lem}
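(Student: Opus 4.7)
The plan is to reduce to the case $i=1$: both the bar lengths in row $i$ of $\lambda$ and the shifted hook lengths in row $i$ of $S(\lambda)$ are determined entirely by the tail $(\lambda_i,\ldots,\lambda_\ell)$, so after a trivial index shift I may assume $\lambda_1$ is the row in question and $\lambda$ has $\ell$ parts. I will then split the $\lambda_1$ boxes of row~$1$ in $S(\lambda)$ according to whether row $j+1$ exists, which gives groups of sizes $\ell-1$ and $\lambda_1-\ell+1$ matching the sizes of the two pieces of the bar-length set.

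For $1\le j\le \ell-1$ (call this Type~I), strictness of $\lambda$ forces $\lambda_k\ge j-k+1$ for every $1<k\le j$, so row $k$ extends past column $j$ and contributes $1$ to the leg; summing arm, leg, self, and the extra $\lambda_{j+1}$ from row $j+1$ yields $h^*_{1,j}=\lambda_1+\lambda_{j+1}$. As $j$ varies this enumerates $\{\lambda_1+\lambda_k : 1<k\le\ell\}$, the first piece of the bar-length set. For $\ell\le j\le\lambda_1$ (Type~II), the row $j+1$ term disappears and, writing $k_j=\max\{k:\lambda_k+k-1\ge j\}$, one finds $h^*_{1,j}=\lambda_1-j+k_j$. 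Since $k_j$ is weakly decreasing in $j$, these hooks are strictly decreasing, hence distinct, and all lie in $\{1,\ldots,\lambda_1\}$.

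The main obstacle is showing that the Type~II hook set $H$ is disjoint from $T=\{\lambda_1-\lambda_k : 1<k\le\ell\}$; once this is done, counting forces $H=\{1,\ldots,\lambda_1\}\setminus T$, which is exactly the second piece of the bar-length set, and combining Types~I and~II finishes the proof. I will argue by contradiction: if $h^*_{1,j}=\lambda_1-\lambda_k$ then $j=\lambda_k+k_j$, and the defining maximality of $k_j$ gives both $\lambda_{k_j}\ge\lambda_k+1$ (so $k_j<k$) and $\lambda_{k_j+1}<\lambda_k$ (so $k_j+1>k$; the boundary case $k_j=\ell$ is handled directly, since then $k\le\ell=k_j$ already contradicts $k_j<k$). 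These inequalities are incompatible, giving the disjointness.
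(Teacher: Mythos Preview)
Your proof is correct and follows the same high-level plan as the paper: reduce to $i=1$, split the first row of $S(\lambda)$ into the columns $1\le j\le \ell-1$ (where $h^*_{1,j}=\lambda_1+\lambda_{j+1}$) and $\ell\le j\le\lambda_1$, and show the latter hooks are exactly $\{1,\dots,\lambda_1\}\setminus\{\lambda_1-\lambda_k:2\le k\le\ell\}$ via a disjointness-plus-counting argument. The only difference is in the bookkeeping for the second range. The paper introduces the auxiliary ordinary partition $\mu=(\lambda_1-\ell+1,\lambda_2-\ell+2,\dots,\lambda_\ell)$, observes that $h^*_{1,j}(\lambda)=h_{1,\,j-\ell+1}(\mu)$ for $j\ge\ell$, and then uses the ordinary hook formula $h_{a,b}(\mu)=\mu_a+\mu'_b-a-b+1$: assuming $h_{1,c}(\mu)=\lambda_1-\lambda_k=h_{1,1}(\mu)-h_{k,1}(\mu)$ forces $h_{k,c}(\mu)=0$, a contradiction. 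Your quantity $k_j$ is nothing other than $\mu'_{j-\ell+1}$, so the two contradictions are literally the same computation in different notation; your version is more self-contained (no auxiliary partition, no appeal to the ordinary hook formula), while the paper's makes the link to ordinary Young-diagram hooks explicit.
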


\begin{proof}
Let $\mu \coloneqq (\la_1 - \ell +1, \la_2 -\ell +2, \dots, \la_{\ell})$.
By the definition of the shifted hook lengths, we have
\[
h_{i,j}^*(\la)=\begin{cases}
\la_i+\la_{j+1} & \text{ if }~ i \le j \le \ell-1,\\
h_{i, j-\ell+1}(\mu) & \text{ if }~ \ell \le j \le \la_i.
\end{cases}
\]
We show that the statement is true for the first row.
Assume, on the contrary, that $h_{1,j}^*(\la)=h_{1, j-\ell+1}(\mu)=\la_1-\la_k=h_{1,1}(\mu)-h_{k,1}(\mu)$ for some $k$. Then, by the definition of hook lengths, 
\[
\mu_1+\mu_{j-\ell+1}'-(j-\ell+1) = (\mu_1+\mu_1'-1)-(\mu_k+\mu_1' -k),
\]
which implies that $\mu_k+\mu_{j-\ell+1}'-(k+j-\ell)=h_{k, j-\ell+1}(\mu)=0$. Since the hook lengths are always nonzero, we get a contradiction. Similarly, this argument works for the $i$th row in general. 
\end{proof}

\subsection{Characterizations}

In the theory of core partitions, a partition $\la$ is an $s$-core if $s\notin \mathcal{H}(\la)$ or, equivalently, if  $ms\notin\mathcal{H}(\la)$ for all $m$. In \cite[p. 31]{MY}, Morris and Yaseen gave a corollary that $\la$ is an $\overline{s}$-core if and only if none of the bar lengths in the rows of $\la$ are divisible by $s$. However, Olsson \cite[p. 27]{Olsson-book} pointed out that this corollary is not true when $s$ is even. In Figure \ref{fig:bar}, one can see that $\la=(7,6,3,2)$ is a $\overline{4}$-core partition, but $h^*_{2,3}(\la)=8$. Later, Wang and Yang \cite{WY} gave a characterization of $\overline{s}$-core partitions.

\begin{prop}\cite{WY}\label{prop:bar}
For a strict partition $\la=(\la_1,\la_2,\dots,\la_{\ell})$, $\la$ is an $\overline{s}$-core if and only if all the following hold:
\begin{enumerate}
\item[(a)] $s \notin \la$.
\item[(b)] If $\la_i \in \la$ with $\la_i>s$, then $\la_i -s \in \la$.
\item[(c)] If $\la_i, \la_j \in \la$, then $\la_i+\la_j \not\equiv 0 \pmod{s}$ except when $s$ is even and $\la_i,\la_j \equiv s/2 \pmod{s}$.
\end{enumerate}
\end{prop}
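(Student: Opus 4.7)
The plan is to prove the biconditional by treating each direction separately. For the forward direction I would establish (a), (b), (c) in sequence so that the proof of (c) can invoke the previously proven (a) and (b). For the backward direction I would argue by contradiction, splitting into cases according to the two shapes of bar lengths in the definition: the \emph{sum type} $\la_i + \la_j$ with $j > i$, and the \emph{residue type} in $\{1, \ldots, \la_i\} \setminus \{\la_i - \la_j : j > i\}$.

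For (a), if $\la_i = s$ then by strictness $\la_i - \la_j > 0$ for all $j > i$, so $s = \la_i$ lies in the residue-type bar lengths of row $i$, contradicting the $\overline{s}$-core hypothesis. For (b), suppose $\la_i > s$ but $\la_i - s \notin \la$; then $s \in \{1, \ldots, \la_i\}$ while $s \ne \la_i - \la_j$ for any $j > i$, so again $s$ appears as a residue-type bar length in row $i$. For (c), I would first iterate (b) to descend the chain $\la_i, \la_i - s, \la_i - 2s, \ldots$ which terminates at $r_i := \la_i \bmod s$; this residue lies in $(0,s)$ by (a), so in fact $r_i \in \la$. Now suppose $\la_i + \la_j \equiv 0 \pmod{s}$ and let $r_j$ be defined analogously, so that $r_i + r_j = s$. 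If $r_i \ne r_j$, say $r_i < r_j$, then $r_i, r_j$ are two distinct parts of $\la$ with $r_j$ appearing in an earlier row than $r_i$, and the sum $r_j + r_i = s$ realizes a sum-type bar length in the row containing $r_j$, contradicting $\overline{s}$-coreness. Hence $r_i = r_j = s/2$ with $s$ even, precisely the exception in (c).

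For the backward direction, assume (a), (b), (c) and suppose for contradiction that $s$ is a bar length in some row $i$. If $s = \la_i + \la_j$ for some $j > i$, then (c) would force $s$ even and $\la_i \equiv \la_j \equiv s/2 \pmod{s}$; but $0 < \la_j < \la_i < s$ then compels $\la_i = \la_j = s/2$, contradicting strictness. Otherwise $s$ is a residue-type bar length in row $i$; if $s = \la_i$ this directly contradicts (a), and if $s < \la_i$ then $\la_i - s \notin \{\la_{i+1}, \ldots, \la_\ell\}$ by the residue-type condition, while $\la_i - s$ also cannot equal any part strictly before row $i$ (each such part exceeds $\la_i$), so $\la_i - s \notin \la$, contradicting (b).

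I expect the main obstacle to be the chain argument in the proof of (c): one must iterate (b) carefully so that the descent terminates exactly at the residue $r_i \in (0,s)$ rather than overshooting, and the exceptional case when $s$ is even with both residues equal to $s/2$ must be carved out precisely so that the contradiction is drawn only outside that regime. The backward direction, by contrast, is a clean case split that does not require Lemma~\ref{lem:barhook}.
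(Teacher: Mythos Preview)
Your argument is correct. Note, however, that the paper does not supply its own proof of this proposition: it is quoted as a result of Wang and Yang~\cite{WY} and used as a black box. So there is no paper proof to compare against; your direct elementary argument from the definition of bar lengths stands on its own. One small point worth making explicit in the forward proof of (c): the descent via (b) also shows that no part $\la_i$ is itself a multiple of $s$ (otherwise the chain would land on $s$, contradicting (a)), which is what guarantees the residue $r_i$ is genuinely in $(0,s)$ rather than $0$.
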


We extend this characterization to doubled distinct $s$-core partitions and $s$-CSYDs.

\begin{prop}\label{prop:dd}
For a strict partition $\la=(\la_1,\la_2,\dots,\la_{\ell})$, $\la\la$ is a doubled distinct $s$-core partition if and only if all the following hold:
\begin{enumerate}
\item[(a)] $\la$ is an $\overline{s}$-core.
\item[(b)] $s/2 \notin \la$ for even $s$.
\end{enumerate}
\end{prop}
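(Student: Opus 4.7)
The proof hinges on describing $\mathcal{H}(\la\la)$ in terms of data attached to $\la$. Since $\la\la$ has Frobenius symbol $\begin{pmatrix}\la_1 & \cdots & \la_\ell\\ \la_1-1 & \cdots & \la_\ell-1\end{pmatrix}$, its main-diagonal hook lengths are $2\la_1, 2\la_2, \dots, 2\la_\ell$. Combining this with the remark preceding the proposition that the hook set of $D(\la\la)$ strictly to the right of the main diagonal coincides with $\mathcal{H}^*(\la)$, and using the self-conjugacy of $\la\la$, I obtain
\[
\mathcal{H}(\la\la) = \mathcal{H}^*(\la) \cup \{2\la_1, 2\la_2, \dots, 2\la_\ell\}.
\]
Hence $\la\la$ is a doubled distinct $s$-core if and only if no shifted hook length of $\la$ is divisible by $s$ and $s \nmid 2\la_i$ for every $i$, which by Lemma~\ref{lem:barhook} is equivalent to no bar length of $\la$ being divisible by $s$ together with $s \nmid 2\la_i$.

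The forward direction is then immediate: if $\la\la$ is a doubled distinct $s$-core, then $s$ itself is not a bar length of $\la$, so $\la$ is an $\overline{s}$-core by definition; when $s$ is even, the presence of $s/2 \in \la$ would force $2(s/2)=s$ to appear as a diagonal hook of $\la\la$, a contradiction.

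For the converse, assume $\la$ is an $\overline{s}$-core and (when $s$ is even) $s/2 \notin \la$. I first use Proposition~\ref{prop:bar}(b), the descent property ``$\la_i>s \Rightarrow \la_i-s\in\la$'', to argue that no part of $\la$ is $\equiv 0 \pmod{s}$ (otherwise successive descent would produce $s\in\la$, contradicting Proposition~\ref{prop:bar}(a)), and that for even $s$ no part of $\la$ is $\equiv s/2 \pmod{s}$ (otherwise descent would produce $s/2\in\la$). Together these yield $s \nmid 2\la_i$ for all $i$ regardless of the parity of $s$. For the bar lengths I treat the two types in the definition separately. A bar length $\la_i+\la_j$ divisible by $s$ is forbidden by Proposition~\ref{prop:bar}(c) unless $\la_i,\la_j \equiv s/2\pmod{s}$, which has just been excluded. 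For a bar length $v \in \{1,\dots,\la_i\}\setminus\{\la_i-\la_{i+1},\dots,\la_i-\la_\ell\}$, suppose for contradiction $v=ks$; writing $\la_i = qs + r$ with $1\le r \le s-1$ and iterating descent gives $\la_i - ks \in \la$, and since $\la_i-ks<\la_i$ and $\la$ is strict, this value equals some $\la_j$ with $j>i$, contradicting $ks$ lying in the excluded set.

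The only delicate point is the exceptional clause in Proposition~\ref{prop:bar}(c) for even $s$: being an $\overline{s}$-core alone does not preclude bar lengths of the form $\la_i+\la_j$ from being divisible by $s$ (as the example $\la=(7,6,3,2)$ with $s=4$ exhibits), and the extra hypothesis $s/2\notin\la$ is precisely what kills both that exception and the possibility $s\mid 2\la_i$, which is why a single condition suffices to control both portions of $\mathcal{H}(\la\la)$.
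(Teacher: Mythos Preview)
Your argument is correct, but it takes a longer detour than the paper's. One small slip: $\la\la$ is \emph{not} self-conjugate (its Frobenius symbol $(\la_i\mid\la_i-1)$ is asymmetric); what the paper actually notes is that deleting the $(\ell+1)$st column of $D(\la\la)$ leaves a self-conjugate partition. Nonetheless the set identity $\mathcal{H}(\la\la)=\mathcal{H}^*(\la)\cup\{2\la_i\}$ is exactly what the paper's proof asserts, so your conclusion from that point on stands.

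The substantive difference is in the converse direction. You verify by hand, via Proposition~\ref{prop:bar}, that under (a) and (b) \emph{no multiple} of $s$ lies in $\mathcal{H}^*(\la)\cup\{2\la_i\}$ --- treating the two bar-length types and the diagonal hooks separately and invoking the descent property to rule out each residue class. The paper sidesteps all of this by using the standard fact (recalled at the opening of \S2.1) that for an \emph{ordinary} partition $\mu$, being an $s$-core is equivalent to the single condition $s\notin\mathcal{H}(\mu)$. Applied to $\mu=\la\la$, the whole question collapses to whether $s$ itself lies in $\mathcal{H}^*(\la)\cup\{2\la_i\}$: but $s\notin\mathcal{H}^*(\la)$ is, by Lemma~\ref{lem:barhook}, precisely the definition of $\la$ being an $\overline{s}$-core, and $s\notin\{2\la_i\}$ is condition~(b). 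Your route has the virtue of being self-contained --- it never appeals to the $s\notin\mathcal{H}\Leftrightarrow ks\notin\mathcal{H}$ equivalence for ordinary partitions, and in effect reproves it for this configuration --- while the paper's route is a two-line observation once the hook-set decomposition is in hand.
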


\begin{proof}
It is known by Lemma \ref{lem:barhook} and the definition of $\la\la$ that $$\mathcal{H}(\la\la)=\mathcal{H}^*(\la) \cup \{h_{i,i}(\la\la)=2\la_i \mid i=1,2,\dots,\ell \}.$$ Therefore, for an $\overline{s}$-core partition $\la$ and even $s$, $s/2 \in \la$ if and only if $s \in \mathcal{H}(\la\la)$, meaning that $\la\la$ is not a doubled distinct $s$-core.
\end{proof}

\begin{prop}\label{prop:CSYD}
For a strict partition $\la=(\la_1,\la_2,\dots,\la_{\ell})$, $S(\la)$ is an $s$-CSYD if and only if all the following hold:
\begin{enumerate}
\item[(a)] $\la$ is an $\overline{s}$-core.
\item[(b)] $3s/2 \notin \la$ for even $s$.
\end{enumerate}
\end{prop}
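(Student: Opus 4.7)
The plan is to mirror the structure of Proposition 2.3's proof, applying Lemma 2.1 to identify the shifted hook set $\mathcal{H}^*(\la)$ with the union of bar lengths across all rows of $\la$. Under this identification, the condition that $S(\la)$ is an $s$-CSYD becomes the statement that no bar length of $\la$ is a multiple of $s$, so the task reduces to comparing this requirement with the Wang--Yang characterization of $\overline{s}$-cores in Proposition 2.2.

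For necessity, I would first note that if $S(\la)$ is an $s$-CSYD, then $s$ itself cannot be a bar length, so $\la$ is an $\overline{s}$-core, yielding (a). For even $s$, I would then argue that if $3s/2 \in \la$, the inequality $3s/2 > s$ triggers Proposition 2.2(b), forcing $s/2 \in \la$; but then the sum $3s/2 + s/2 = 2s$ appears as a bar length in the row indexed by $3s/2$, contradicting the $s$-CSYD hypothesis. Hence $3s/2 \notin \la$, yielding (b).

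For sufficiency, assuming (a) and (b), I would suppose for contradiction that some positive multiple $ks$ is a bar length in some row $i$. The definition of bar lengths in row $i$ splits into two constituent sets, which I would handle separately. If $ks = \la_i + \la_j$ with $j > i$, then Proposition 2.2(c) forces $s$ to be even with both $\la_i, \la_j \equiv s/2 \pmod{s}$; iterating Proposition 2.2(b) from the larger part then shows that having two distinct parts in this residue class requires $3s/2 \in \la$, contradicting (b). If instead $ks$ lies in the complementary set $\{1, 2, \ldots, \la_i\} \setminus \{\la_i - \la_{j'} : j' > i\}$, I would split on whether $ks = \la_i$ or $ks < \la_i$: in the former, iterated application of Proposition 2.2(b) cascades down to $s \in \la$, contradicting Proposition 2.2(a); in the latter, the same iteration produces $\la_i - ks \in \la$, so $ks = \la_i - \la_{j'}$ for some $j'$, contradicting membership in the complementary set.

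The main obstacle will be the sum case of the sufficiency direction, where one must show that the single combinatorial condition $3s/2 \notin \la$ suffices to exclude every sum $\la_i + \la_j$ that is a multiple of $s$. The key point is to exploit the chain structure imposed by Proposition 2.2(b) on parts congruent to $s/2 \pmod{s}$, together with the fact that $s/2$ is the smallest positive representative of this residue class, so that the absence of $3s/2$ leaves at most one such part in $\la$ and therefore no admissible pair.
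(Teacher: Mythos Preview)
Your proposal is correct and follows essentially the same route as the paper's proof: both directions hinge on Lemma~2.1, Proposition~2.2, and the observation that two distinct parts congruent to $s/2 \pmod{s}$ force $3s/2 \in \la$. Your treatment of the sufficiency direction is in fact more careful than the paper's: where the paper simply asserts that a shifted hook length $sk$ with $k \ge 2$ must arise as a sum $\la_i + \la_j$, you explicitly dispose of the possibility that $ks$ lies in the set $\{1,\dots,\la_i\}\setminus\{\la_i-\la_{j'}\}$ by iterating Proposition~2.2(b), which is the justification the paper leaves implicit.
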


\begin{proof}
 Assume first that $S(\la)$ is an $s$-CSYD. By Lemma \ref{lem:barhook}, $\la$ is an $\overline{s}$-core. If $3s/2 \in \la$, then $s/2 \in \la$ by Proposition \ref{prop:bar} (b). This implies that there is a bar length of $2s$ in $\la$, which means that $S(\la)$ is not an $s$-CSYD. 

 Conversely, suppose that two conditions (a) and (b) hold. If $\la$ is an $\overline{s}$-core but $S(\la)$ is not an $s$-CSYD, then there is a box $(i,j)$ in $S(\la)$ such that $h^*_{i,j}(\la)=sk$ for some $k\geq 2$. It follows from the definition of the bar lengths that there exist $\la_i,\la_j \in \la$ satisfying $\la_i+\la_j=sk$. Also, by Proposition~\ref{prop:bar}~(c), we deduce that $s$ is even and $\la_i,\la_j \equiv s/2 \pmod s$. Hence, when $\la_i > \la_j$, we can write $\la_i = (2m+1)s/2$ for some $m\geq 1$, and therefore $3s/2 \in \la$ by Proposition~\ref{prop:bar}~(b). It leads to a contradiction.
\end{proof}

\begin{rem} \label{rmk:oddoddodd}
From the characterizations we observe that, 
for coprime odd integers $s_1,s_2,\dots,s_p$, we have
\[
\mathcal{BC}_{(s_1, s_2, \dots, s_p)}=\mathcal{CS}_{(s_1, s_2, \dots, s_p)}=\mathcal{DD}_{(s_1, s_2, \dots, s_p)}.
\]
\end{rem}

\subsection{Generating functions}

In this subsection, we consider the generating functions of the following numbers, 
\begin{align*}
sc_s(n) &: \text{~the number of self-conjugate $s$-core partitions of $n$},\\ 
bc_s(n) &: \text{~the number of $\overline{s}$-core partitions of $n$},\\
cs_s(n) &: \text{~the number of  $s$-CSYDs of $n$},\\
dd_s(n) &: \text{~the number of doubled distinct $s$-core partitions of $n$}.  
\end{align*}

Garvan, Kim, and Stanton \cite{GKS} obtained the generating functions of the numbers $sc_s(n)$ and $dd_s(n)$ by using the concept of the core and the quotient of a partition. 

As usual, we use the well-known $q$-product notation $$(a;q)_n=\prod\limits_{i=0}^{n-1}(1-aq^i) \quad  \text{and} \quad (a;q)_{\infty}=\lim\limits_{n \to \infty} (a;q)_n \quad \text{for} ~ |q|<1.$$

\begin{prop}\cite[Equations (7.1a), (7.1b), (8.1a), and (8.1b)]{GKS}\label{prop:gf_GKS}
For a positive integer $s$, we have
\begin{align*}
\sum_{n=0}^{\infty}sc_s(n)q^n&=\begin{dcases*}
               \frac{(-q;q^2)_\infty(q^{2s};q^{2s})^{(s-1)/2}_\infty}{(-q^s;q^{2s})_\infty} & \text{if $s$ is odd},\\
               (-q;q^2)_\infty(q^{2s};q^{2s})^{s/2}_\infty    & \text{if $s$ is even,}
               \end{dcases*}\\[2ex]
\sum_{n=0}^{\infty}dd_s(n)q^n&=\begin{dcases*}
               \frac{(-q^2;q^2)_\infty(q^{2s};q^{2s})^{(s-1)/2}_\infty}{(-q^{2s};q^{2s})_\infty} & \text{if $s$ is odd},\\
               \frac{(-q^2;q^2)_\infty(q^{2s};q^{2s})^{(s-2)/2}_\infty}{(-q^{s};q^{s})_\infty}   & \text{if $s$ is even}.
               \end{dcases*}
\end{align*}
\end{prop}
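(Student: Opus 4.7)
The plan is to reprove both formulas from the abacus/core--quotient description of $s$-cores, adapted to the self-conjugate and doubled-distinct settings; this is the approach taken by Garvan--Kim--Stanton, and the proposition amounts to making it work in both parity cases.

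The starting point is the standard bijection between $s$-core partitions and lattice points $\vec{n}=(n_0,\dots,n_{s-1})\in\mathbb{Z}^{s}$ with $n_0+\cdots+n_{s-1}=0$, together with the quadratic-form expression $|\lambda|=\tfrac{s}{2}\sum n_i^{2}+\sum\bigl(i-\tfrac{s-1}{2}\bigr)n_i$ for the size of the corresponding core. Summing $q^{|\lambda|}$ over all such $\vec{n}$ recovers the usual $s$-core generating function via Jacobi's triple product identity. Both parts of the proposition will be obtained by summing the same quadratic form over the appropriate sublattice of symmetric configurations.

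For the self-conjugate case, conjugation of partitions corresponds to the involution $(n_0,\dots,n_{s-1})\mapsto(-n_{s-1},\dots,-n_0)$ on the $s$-abacus, so self-conjugate $s$-cores are the fixed points. For odd $s$, the coordinates pair up via $n_i\leftrightarrow -n_{s-1-i}$ except that the middle coordinate $n_{(s-1)/2}$ is forced to $0$; for even $s$ the coordinates pair off completely and no middle runner survives. In either parity, the main-diagonal hooks of a self-conjugate partition contribute the factor $(-q;q^2)_{\infty}$, while each pair of free runners contributes a factor $(q^{2s};q^{2s})_{\infty}$ after applying the Jacobi triple product to the Gaussian sum in the paired integer variables. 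In the odd case the fixed middle runner contributes the extra $1/(-q^{s};q^{2s})_{\infty}$ factor; in the even case no such factor appears. This produces exactly the stated right-hand sides.

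For the doubled-distinct case, Proposition~\ref{prop:dd} reduces the problem to $\overline{s}$-cores with the extra constraint $s/2\notin\lambda$ for even $s$. Using the Morris--Olsson $\overline{s}$-quotient (a bar-abacus with a $\mathbb{Z}/2$ symmetry $i\leftrightarrow s-i$) one parametrizes strict $\overline{s}$-cores by a sublattice analogous to the self-conjugate one, but where sizes are computed on the Frobenius entries $\lambda_i$, which in $\lambda\lambda$ get doubled. This replacement effectively sends $q\mapsto q^{2}$ in both the diagonal contribution (giving $(-q^{2};q^{2})_{\infty}$ in place of $(-q;q^{2})_{\infty}$) and the free-runner contribution, and the forbidden-residue condition from Proposition~\ref{prop:dd}(b) kills the central theta factor in exactly the way that produces the denominator $(-q^{2s};q^{2s})_{\infty}$ or $(-q^{s};q^{s})_{\infty}$ according to the parity of $s$.

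The hardest part is purely bookkeeping: aligning the several normalization choices (abacus offset, Frobenius vs.\ hook conventions, and which runner is ``central'') so that the exponent of $(q^{2s};q^{2s})_{\infty}$ comes out to $(s-1)/2$, $s/2$, $(s-1)/2$, or $(s-2)/2$ in the four respective cases, and verifying by Jacobi's triple product that the surviving $\pm q^{s}$ theta factors collapse exactly into the stated numerator/denominator combinations rather than producing spurious extra factors.
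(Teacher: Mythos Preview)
The paper does not prove this proposition at all: it is quoted verbatim from Garvan--Kim--Stanton \cite{GKS} and used as a black box, so there is no ``paper's own proof'' to compare against. Your sketch is, in spirit, exactly the GKS argument (abacus parametrization of $s$-cores by $\vec{n}\in\mathbb{Z}^s$ with $\sum n_i=0$, restriction to the fixed sublattice of the conjugation involution, and evaluation of the resulting Gaussian sums by Jacobi's triple product), so at the level of strategy you are on the right track and nothing genuinely new is required.

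That said, as written this is an outline rather than a proof, and a couple of the narrative shortcuts are misleading. Saying that ``the main-diagonal hooks of a self-conjugate partition contribute the factor $(-q;q^2)_\infty$'' conflates two different derivations: in the GKS lattice computation that factor does not arise from diagonal hooks but from the theta evaluation of the paired-runner sums, and mixing the Frobenius-symbol picture with the abacus picture here is exactly where the bookkeeping errors you warn about tend to creep in. Similarly, for the doubled-distinct case you invoke Proposition~\ref{prop:dd} and an informal ``$q\mapsto q^2$'' substitution, but GKS work directly with the $DD_t$ quotient rather than passing through $\overline{s}$-cores, and your explanation of why the denominator becomes $(-q^{2s};q^{2s})_\infty$ versus $(-q^s;q^s)_\infty$ is asserted rather than computed. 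None of this is fatal, but to turn the outline into an actual proof you would need to write down the fixed sublattice explicitly in each of the four parity cases, compute $|\lambda|$ on it, and carry out the triple-product identifications line by line; the concluding paragraph correctly identifies this as the real content, but does not supply it.
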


The generating function of the numbers $bc_s(n)$ for odd $s$ was found by Olsson \cite{Olsson-book}. Note that for odd $s$, it is clear that $bc_s(n)=cs_s(n)$ as a partition $\la$ is an $\overline{s}$-core if and only if it is an $s$-CSYD by Propositions \ref{prop:bar} and \ref{prop:CSYD}.  

\begin{prop}\cite[Proposition (9.9)]{Olsson-book} \label{prop:gf_O}
For an odd integer $s$, we have
\[
\sum_{n=0}^{\infty}bc_{s}(n)q^n=\sum_{n=0}^{\infty}cs_{s}(n)q^n=\frac{(-q;q)_\infty(q^{s};q^{s})^{(s-1)/2}_\infty}{(-q^s;q^s)_\infty}.
\]
\end{prop}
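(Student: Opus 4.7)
The plan is to derive this via the $\overline{s}$-abacus decomposition for strict partitions, which is the bar analogue of the classical core-quotient bijection.

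First, for odd $s$, I would establish a size-preserving bijection between strict partitions $\la$ and tuples $(\overline{\la}; \mu^{(0)}, \mu^{(1)}, \ldots, \mu^{((s-1)/2)})$, where $\overline{\la}$ is an $\overline{s}$-core, $\mu^{(0)}$ is a strict partition, and $\mu^{(1)}, \ldots, \mu^{((s-1)/2)}$ are ordinary partitions, subject to the size identity
\[
|\la| = |\overline{\la}| + s\bigl(|\mu^{(0)}| + |\mu^{(1)}| + \cdots + |\mu^{((s-1)/2)}|\bigr).
\]
The construction places beads at the positions $\la_1, \ldots, \la_\ell$ on an $s$-runner abacus. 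Using Proposition \ref{prop:bar}, one shows that $\la$ is an $\overline{s}$-core exactly when the configuration is \emph{reduced} in the sense that no bead can be slid down its runner by one step (which would correspond to removing a bar of length divisible by $s$). The quotient components are then extracted by bead-sliding: runner $0$ contributes $\mu^{(0)}$, and each pair of runners $\{i, s-i\}$ with $1 \le i \le (s-1)/2$ together contributes a single $\mu^{(i)}$.

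Second, translating this bijection into generating functions yields
\[
(-q;q)_\infty = \left(\sum_{n \ge 0} bc_s(n)\,q^n\right)\cdot (-q^s;q^s)_\infty \cdot \frac{1}{(q^s;q^s)_\infty^{(s-1)/2}},
\]
where the three right-hand factors correspond respectively to bar-cores, the strict quotient $\mu^{(0)}$ (weight $q^s$ per unit of size), and the $(s-1)/2$ ordinary quotients. Solving for the bar-core series gives exactly the displayed formula.

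Third, the equality $bc_s(n) = cs_s(n)$ for odd $s$ is immediate from Proposition \ref{prop:CSYD}: the extra condition $3s/2 \notin \la$ is vacuous since $3s/2$ is not an integer when $s$ is odd.

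The main obstacle is the first step, where one must justify that runner $0$ encodes a \emph{strict} partition whereas the remaining runners pair up to encode \emph{ordinary} partitions. This asymmetry does not appear in the classical core-quotient theory and is precisely what forces a factor of $(-q^s;q^s)_\infty$ in the denominator rather than a factor of $(q^s;q^s)_\infty^{-1}$. The exceptional clause in Proposition \ref{prop:bar}(c) concerning parts $\equiv s/2 \pmod s$ is a shadow of the same phenomenon; since $s$ here is odd, that clause is empty and the bijection is unambiguous.
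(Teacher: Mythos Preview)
Your approach is sound, but note that the paper does not actually prove this proposition: it is quoted verbatim from Olsson's book \cite[Proposition (9.9)]{Olsson-book}, and the only argument the paper supplies is the one-line observation that $bc_s(n)=cs_s(n)$ for odd $s$ via Propositions~\ref{prop:bar} and~\ref{prop:CSYD} (since the condition $3s/2\notin\la$ is vacuous). You handle that piece identically.

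For the generating function itself, your sketch via the $\overline{s}$-core/$\overline{s}$-quotient decomposition is exactly the method Olsson uses in the cited reference: for odd $s$, runner $0$ of the bar-abacus records a strict partition while each conjugate pair of runners $\{i,s-i\}$ with $1\le i\le (s-1)/2$ records an ordinary partition, and the size splits as $|\la|=|\overline{\la}|+s\cdot(\text{quotient size})$. So there is nothing to compare against in the present paper, and your proposal recovers the original source's argument correctly. The one place to be careful in a full write-up is the claim that each pair $\{i,s-i\}$ contributes an \emph{ordinary} (not strict) partition; this requires the observation that Proposition~\ref{prop:bar}(c) forbids beads on both runners $i$ and $s-i$ simultaneously, so the pair behaves like a single unrestricted runner with beads allowed in either direction.
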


From Propositions \ref{prop:gf_GKS} and \ref{prop:gf_O}, we also see that $dd_s(2n)=bc_{s}(n)$ when $s$ is odd.
We now give generating functions of the numbers $bc_{s}(n)$ and $cs_s(n)$ for even $s$ by using Propositions \ref{prop:bar}, \ref{prop:dd}, and \ref{prop:CSYD}. 

\begin{prop}\label{prop:bargen}
For an even integer $s$, we have
\[
\sum_{n=0}^{\infty}bc_{s}(n)q^n=\frac{(-q;q)_\infty(q^{s};q^{s})^{(s-2)/2}_\infty}{(-q^{s/2};q^{s/2})_\infty}\sum_{n\geq 0} q^{sn^2/2}.
\]
\end{prop}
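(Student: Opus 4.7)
The plan is to decompose any $\overline{s}$-core strict partition $\la$ according to residue classes of its parts modulo $s$, use Proposition \ref{prop:bar} to describe exactly which multisets of parts can occur in each class, and then apply Jacobi's triple product to the resulting sum.

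First I would show that no part is divisible by $s$. Condition (a) says $s \notin \la$, and if $ks \in \la$ for $k \geq 2$, iterating condition (b) forces $s \in \la$, a contradiction. Next, for a residue $r$ with $1 \leq r \leq s-1$, $r \neq s/2$, condition (b) implies that the parts of $\la$ congruent to $r \pmod s$ form a consecutive staircase $\{r, r+s, r+2s, \dots, r+ks\}$ for some $k \geq 0$ (or the empty set). By condition (c), for each $r$ with $1 \leq r < s/2$, at most one of the residue classes $r$ and $s-r$ can appear. For residue $s/2$, the exceptional clause of (c) allows arbitrary sums to be $\equiv 0 \pmod s$, so (b) still forces a staircase $\{s/2, 3s/2, \dots, (2k+1)s/2\}$ (or empty), whose total sum equals $s(k+1)^2/2$. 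This contributes the factor
\[
\sum_{n \geq 0} q^{sn^2/2}
\]
to the generating function.

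Next I would compute the contribution of each pair $\{r, s-r\}$ with $1 \leq r \leq s/2-1$. Summing over staircases $\{r, r+s, \dots, r+(m-1)s\}$ of length $m \geq 0$ from both sides of the pair gives
\[
f_r(q) = 1 + \sum_{m \geq 1} q^{sm(m-1)/2}\bigl(q^{mr} + q^{m(s-r)}\bigr).
\]
The trick is to observe that the substitution $m \mapsto -m$ in the second half satisfies $sm(m+1)/2 + m(r-s) = sm(m-1)/2 + mr$, so in fact
\[
f_r(q) = \sum_{m \in \ZZ} q^{sm(m-1)/2 + mr} = \sum_{m \in \ZZ} (q^r)^m (q^s)^{m(m-1)/2}.
\]
By Jacobi's triple product identity, this equals $(q^s;q^s)_\infty(-q^r;q^s)_\infty(-q^{s-r};q^s)_\infty$.

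Finally I would multiply together the factors over $r = 1, \dots, s/2-1$ together with the residue-$s/2$ factor. The prefactors give $(q^s;q^s)_\infty^{(s-2)/2}$, while the products $(-q^r;q^s)_\infty(-q^{s-r};q^s)_\infty$ run over all residues $r \in \{1,\dots,s-1\}\setminus\{s/2\}$. Using
\[
(-q;q)_\infty = \prod_{r=1}^{s}(-q^r;q^s)_\infty \quad\text{and}\quad (-q^{s/2};q^{s/2})_\infty = (-q^{s/2};q^s)_\infty(-q^s;q^s)_\infty,
\]
the remaining infinite product simplifies to $(-q;q)_\infty/(-q^{s/2};q^{s/2})_\infty$, producing the stated formula. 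The main obstacle is spotting the $m \mapsto -m$ symmetrization that turns $f_r(q)$ into a bilateral theta series suitable for Jacobi's triple product; the rest is bookkeeping with $q$-products.
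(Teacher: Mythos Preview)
Your argument is correct, but the route differs from the paper's. The paper does not compute the generating function from scratch; instead it observes, via Propositions~\ref{prop:bar} and~\ref{prop:dd}, that an $\overline{s}$-core $\la$ with $(2i-1)s/2\in\la$ and $(2i+1)s/2\notin\la$ becomes, after stripping the parts $s/2,3s/2,\dots,(2i-1)s/2$ (whose total is $i^2s/2$), a doubled distinct $s$-core of half the remaining size. This yields $bc_s(n)=\sum_{i\ge 0}dd_s(2n-i^2s)$, and the result then follows immediately from the Garvan--Kim--Stanton formula for $\sum dd_s(n)q^n$ in Proposition~\ref{prop:gf_GKS}. Your approach is more self-contained: you bypass doubled distinct cores entirely, analyze each residue pair $\{r,s-r\}$ directly from Proposition~\ref{prop:bar}, and recognize the bilateral series as a Jacobi triple product. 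The paper's proof is shorter because it cashes in on an external generating function, whereas yours trades that dependence for an explicit theta-function computation; in effect you are re-deriving the infinite-product side of the doubled distinct generating function along the way.
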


\begin{proof}
Let $s$ be a fixed even integer. 
From Propositions \ref{prop:bar} and \ref{prop:dd} we first see that 
the number of $\overline{s}$-core partitions $\la$ of $n$ for which $s/2\notin \la$ is equal to $dd_s(2n)$. We also notice that for a positive integer $i$, the number of $\overline{s}$-core partitions $\la$ of $n$ for which $(2i-1)s/2\in \la$ and $(2i+1)s/2\notin \la$ is equal to $dd_s(2n-i^2s)$ since $(2i-1)s/2\in \la$ implies $(2i-3)s/2, (2i-5)s/2, \dots, s/2 \in \la$ by Proposition \ref{prop:bar} (b).
Therefore, we have 
\[
bc_s(n)=dd_s(2n)+dd_s(2n-s)+dd_s(2n-4s)+\cdots=\sum_{i\geq0} dd_s(2n-i^2s),
\]
which completes the proof from Proposition \ref{prop:gf_GKS}.
\end{proof}

\begin{prop}
For an even integer $s$, we have
\[
\sum_{n=0}^{\infty}cs_s(n)q^n=\frac{(-q;q)_\infty(q^{s};q^{s})^{(s-2)/2}_\infty}{(-q^s;q^{s/2})_\infty}.
\]
\end{prop}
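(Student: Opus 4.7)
The plan is to mimic the proof of Proposition~\ref{prop:bargen}, invoking Proposition~\ref{prop:CSYD} in place of Proposition~\ref{prop:dd}. By Proposition~\ref{prop:CSYD}, an $s$-CSYD of $n$ is precisely an $\overline{s}$-core partition $\la$ of $n$ with $3s/2 \notin \la$. The key observation is that, by Proposition~\ref{prop:bar}(b), if any odd multiple $(2k+1)s/2$ of $s/2$ with $k \geq 1$ lies in $\la$, then iteratively subtracting $s$ forces $3s/2 \in \la$. Hence the $\overline{s}$-cores satisfying $3s/2 \notin \la$ fall into exactly two classes: those with $s/2 \notin \la$, and those with $s/2 \in \la$ but $3s/2 \notin \la$.

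Re-using the counts established in the proof of Proposition~\ref{prop:bargen}, the first class is enumerated by $dd_s(2n)$ (the $i=0$ term there) and the second by $dd_s(2n-s)$ (the $i=1$ term), yielding
\[
cs_s(n) = dd_s(2n) + dd_s(2n-s).
\]
Since the doubled distinct partition $\la\la$ has size $2|\la|$, we have $dd_s(m) = 0$ for odd $m$, so substituting $q \mapsto q^{1/2}$ in the even-$s$ generating function for $dd_s$ from Proposition~\ref{prop:gf_GKS} gives
\[
\sum_{n \geq 0} dd_s(2n)\, q^n = \frac{(-q;q)_\infty (q^{s};q^{s})_\infty^{(s-2)/2}}{(-q^{s/2};q^{s/2})_\infty}.
\]
Packaging the two terms together then produces
\[
\sum_{n \geq 0} cs_s(n)\, q^n = (1+q^{s/2}) \cdot \frac{(-q;q)_\infty (q^{s};q^{s})_\infty^{(s-2)/2}}{(-q^{s/2};q^{s/2})_\infty}.
\]

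The final step is the routine $q$-product identity $(-q^{s/2};q^{s/2})_\infty = (1+q^{s/2})(-q^s;q^{s/2})_\infty$, which cancels the $(1+q^{s/2})$ factor and delivers the claimed formula. The only substantive step is the case split giving $cs_s(n) = dd_s(2n) + dd_s(2n-s)$; once that is in place, the generating-function manipulations are an immediate consequence of Proposition~\ref{prop:gf_GKS} and the parity of $|\la\la|$, so this is where I expect the only genuine (though mild) work to lie.
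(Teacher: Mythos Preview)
Your proposal is correct and takes essentially the same approach as the paper: the paper's own proof consists of the single observation $cs_s(n)=dd_s(2n)+dd_s(2n-s)$ (citing Propositions~\ref{prop:dd} and~\ref{prop:CSYD}), which is exactly the identity you derive. You have simply filled in the generating-function manipulation that the paper leaves to the reader.
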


\begin{proof}
Similar to the proof of Proposition \ref{prop:bargen}, $cs_s(n)=dd_s(2n)+dd_s(2n-s)$ for even $s$ by Propositions \ref{prop:dd} and \ref{prop:CSYD}.
\end{proof}



\section{Enumeration on $(s,t)$-cores} \label{sec:double}

A \emph{north-east ($NE$) lattice path} from $(0,0)$ to $(s,t)$ is a lattice path which consists of steps $N=(0,1)$ and $E=(1,0)$. Let $\mathcal{NE}(s,t)$ denote the set of all $NE$ lattice paths from $(0,0)$ to $(s,t)$.
In this section, we give $NE$ lattice path interpretations for $(\ols{s\phantom{t}},\overline{t})$-core related partitions and count such paths.

Combining the results on self-conjugate $(s,t)$-core partitions and $(\ols{s\phantom{t}},\overline{t})$-core partitions which are independently proved by Ford, Mai, and Sze \cite[Theorem 1]{FMS}, Bessenrodt and Olsson \cite[Theorem 3.2]{BO}, and Wang and Yang \cite[Theorem 1.3]{WY}, we get the following theorem.

\begin{thm}\cite{FMS,BO,WY}\label{thm:selfbar}
For coprime positive integers $s$ and $t$,
\[
|\mathcal{BC}_{(s,t)}|=|\mathcal{SC}_{(s,t)}|=\binom{\lfloor s/2 \rfloor + \lfloor t/2 \rfloor}{\lfloor s/2 \rfloor}.
\]
\end{thm}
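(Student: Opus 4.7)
The plan is to establish both equalities by constructing bijections from each of $\mathcal{SC}_{(s,t)}$ and $\mathcal{BC}_{(s,t)}$ onto the set $\mathcal{NE}(\lfloor s/2 \rfloor, \lfloor t/2 \rfloor)$, whose cardinality is the claimed binomial. Since $\gcd(s,t)=1$, at most one of $s,t$ is even; I would split into the case where both are odd and the case where one is even, assuming without loss of generality that $t$ is odd in the mixed-parity case.

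For the self-conjugate side, I would use the first-column hook length encoding: a self-conjugate partition is determined by its strict partition of first-column hook lengths, and the $s$-core condition forbids specific residue classes modulo $s$. Following the Ford--Mai--Sze approach, I would place the admissible residues on a rectangular grid of dimensions $\lfloor s/2 \rfloor \times \lfloor t/2 \rfloor$ via the Chinese Remainder Theorem, so that the simultaneous $(s,t)$-core and self-conjugacy conditions force the encoded subset to be the complement of an order ideal; reading its boundary gives the corresponding $NE$ lattice path.

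For the bar-core side, I would apply Proposition~\ref{prop:bar} to characterize an $(\ols{s\phantom{t}},\overline{t})$-core purely by its set of parts $P\subset\mathbb{N}$: $P$ is closed under subtraction by $s$ and by $t$ (condition~(b)), avoids both $s$ and $t$ (condition~(a)), and contains at most one element from each pair $\{k,s-k\}$ modulo $s$ and $\{k,t-k\}$ modulo $t$ (condition~(c)), subject to the noted $s/2$ exception. Placing integers on the same rectangle via CRT, these conditions translate into $P$ selecting a down-closed region of cells, again equivalent to an $NE$ lattice path. This construction mirrors the Bessenrodt--Olsson Yin--Yang diagram in the odd case and its Wang--Yang extension for mixed parity, and it will be revisited in Section~\ref{sec:double} when the doubled distinct and CSYD counts are derived.

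The main obstacle is the even-parity case. When $s$ is even, the exception at $s/2$ in Proposition~\ref{prop:bar}(c) means that the residue class $s/2$ behaves differently from the paired classes $\{k,s-k\}$ with $k\neq s/2$; this creates a distinguished row (or column) in the Yin--Yang diagram that must be handled carefully so that the bijection to $\mathcal{NE}(\lfloor s/2\rfloor,\lfloor t/2\rfloor)$ remains dimension-correct. The analogous subtlety on the self-conjugate side is that the central hook can equal $s/2$, and I would need to verify that these two special features match under the bijections so that both sides yield the same binomial coefficient. Alternatively, once the self-conjugate count is established via Ford--Mai--Sze, one can invoke the direct bijection of Gramain--Nath--Sellers between $\mathcal{SC}_{(s,t)}$ and $\mathcal{BC}_{(s,t)}$ in the all-odd case and supply a parallel argument in the mixed-parity case to unify the two enumerations.
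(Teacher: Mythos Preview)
The paper does not supply a proof of this theorem; it is quoted directly from the cited references, with Ford--Mai--Sze handling $\mathcal{SC}_{(s,t)}$, Bessenrodt--Olsson handling $\mathcal{BC}_{(s,t)}$ when both $s,t$ are odd via the Yin--Yang diagram, and Wang--Yang extending the latter to mixed parity. Your plan follows exactly those cited arguments, so there is nothing further to compare.

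One small correction worth noting: the Ford--Mai--Sze encoding of a self-conjugate partition uses its \emph{main-diagonal} hook lengths (a set of distinct odd positive integers), not the first-column hook lengths. Consequently your remark that ``the central hook can equal $s/2$'' is not the relevant even-parity subtlety in that encoding; the actual issue is how the residue class $s/2$ (which has no partner under $k\mapsto s-k$) fits into the rectangular CRT grid. This does not affect your overall strategy, but would need to be phrased correctly in a full write-up.
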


Also, Ding \cite{Ding} examined the Hasse diagram of the poset structure of an $(s,s+1)$-CSYD to count them.

\begin{thm}\cite[Theorem 3.5]{Ding}\label{thm:Ding}
For any positive integer $s\geq 2$, 
\[
|\mathcal{CS}_{(s,s+1)}|=\binom{s-1}{\floor*{(s-1)/2}}+\binom{s-2}{\floor*{(s-1)/2}}.
\]
\end{thm}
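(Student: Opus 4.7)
By Proposition~\ref{prop:CSYD}, a strict partition $\la$ is an $(s,s+1)$-CSYD if and only if $\la \in \mathcal{BC}_{(s,s+1)}$ and $3e/2 \notin \la$, where $e$ is the unique even element of $\{s, s+1\}$. The problem therefore reduces to enumerating $(\ols{s},\ols{s+1})$-cores that avoid $3e/2$ as a part. Since $|\mathcal{BC}_{(s,s+1)}| = \binom{s}{\lfloor s/2 \rfloor}$ by Theorem~\ref{thm:selfbar}, the claim is equivalent to
\[
\binom{s}{\lfloor s/2 \rfloor} - \big|\{\la \in \mathcal{BC}_{(s,s+1)} : 3e/2 \in \la\}\big| = \binom{s-1}{\lfloor(s-1)/2\rfloor} + \binom{s-2}{\lfloor(s-1)/2\rfloor}.
\]

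To count the ``bad'' set I would pass through the Yin-Yang lattice-path bijection of Bessenrodt-Olsson~\cite{BO} and Wang-Yang~\cite{WY}, which realises $\mathcal{BC}_{(s,s+1)}$ as the set $\mathcal{NE}(\lfloor s/2 \rfloor, \lfloor (s+1)/2 \rfloor)$. Under this correspondence each integer that may appear as a part of a $(\ols{s},\ols{s+1})$-core is attached to a specific cell of the rectangle, so the condition ``$3e/2 \in \la$'' translates into the $NE$ path visiting one distinguished lattice point $P_e$ lying on the boundary of the rectangle. The paths forced through $P_e$ are then counted by a single binomial coefficient; a parity case split yields $\binom{s-2}{s/2-2}$ when $s$ is even and $\binom{s-2}{(s-5)/2}$ when $s$ is odd, as direct verification for small $s$ (say $s=4,5,6$) confirms.

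Subtracting this bad count from $\binom{s}{\lfloor s/2 \rfloor}$ and applying Pascal's rule twice rearranges the difference into the claimed sum; the verification splits by parity but is routine. As a sanity check, the same formula falls out of Theorem~\ref{thm:main1} specialised to $t = s+1$ after a short binomial simplification. The main obstacle is the middle step: identifying precisely which boundary cell of the Yin-Yang rectangle is labelled $3e/2$, since the cases $e = s$ and $e = s+1$ behave differently and the relevant cell sits near the rectangle's edge, so one must be careful with the boundary. Once $P_e$ is located and the bad set pinned down, the remaining enumeration and binomial manipulation are straightforward.
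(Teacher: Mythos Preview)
Your plan is correct and essentially mirrors the paper's proof of Theorem~\ref{thm:CSYD2} (the paper itself cites Ding's result without reproving it, but its general $(s,t)$-CSYD count specialises to it). Both arguments hinge on the Yin--Yang bijection $\psi$ and the observation that the cells labeled $-e/2$ and $-3e/2$ occupy positions $(1,(t-1)/2)$ and $(1,(t-3)/2)$ in the top row of the array; the paper decomposes the good paths into those ending in $N$ and those ending in $NE$, whereas you subtract the bad paths ending in $EE$ from the full count $\binom{s}{\lfloor s/2\rfloor}$ --- these are complementary views of the same three-way split. Your flagged ``main obstacle'' (locating $P_e$) is exactly the computation the paper carries out, and the answer is that $P_e$ is the lattice point two $E$-steps left of the upper-right corner of the rectangle, so the bad paths are those forced through it, giving the binomial you wrote down.
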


From now on, we count doubled distinct $(s,t)$-cores 
and $(s,t)$-CSYDs. When $s$ and $t$ are both odd, the numbers of such partitions are already known by Remark \ref{rmk:oddoddodd}. We focus on the case when $s$ is even and $t$ is odd.

For $(\ols{s\phantom{t}},\overline{t})$-cores with coprime odd integers $s$ and $t$ such that $1<s<t$, Bessenrodt and Olsson \cite{BO} defined the Yin-Yang diagram as an array $A(s,t)=\{A_{i,j}\}$, where 

\[
A_{i,j}\coloneqq-\frac{s+1}{2}t+js+it \qquad \text{ for } 1 \le i \le \frac{s-1}{2} \text{ and } 1 \le j \le \frac{t-1}{2}.
\]

The location of $A_{i,j}$ is at the intersection of the $i$th row from the top and the $j$th column from the left. 
For fixed $s$ and $t$, they showed that 
the set of parts consisting of all possible $(\ols{s\phantom{t}},\overline{t})$-core partitions is equal to the set of absolute values of $A_{i,j}$ in $A(s,t)$.
They also gave a bijection $\phi$ between $\mathcal{BC}_{(s,t)}$ and the set $\mathcal{NE}((t-1)/2, (s-1)/2)$ in the Yin-Yang diagram from the lower-left corner to the upper-right corner. For an $NE$ lattice path $P$ in the Yin-Yang diagram $A(s,t)$, let $M(P)$ denote the set consisting of positive entries above $P$ and the absolute values of negative entries below $P$. According to the bijection $\phi$, if $\la$ is an $(\ols{s\phantom{t}},\overline{t})$-core partition and $P=\phi(\la)$ is the corresponding path in $A(s,t)$, then $M(P)$ is equal to the set of parts in $\la$.

For $(\ols{s\phantom{t}},\overline{t})$-cores with coprime even $s$ and odd $t$, Wang and Yang \cite{WY} defined the Yin-Yang diagram to be an array  $B(s,t)$, where
\[
B_{i,j}\coloneqq-\frac{s+2}{2}t+js+it \qquad \text{ for } 1 \le i \le \frac{s}{2} \text{ and } 1 \le j \le \frac{t-1}{2},
\]
and gave a bijection $\psi$ between the sets $\mathcal{BC}_{(s,t)}$ and  $\mathcal{NE}((t-1)/2, s/2)$ in $B(s,t)$ from the lower-left corner to the upper-right corner. Again, the map $\psi$ sends an $(\ols{s\phantom{t}},\overline{t})$-core $\la$ to the path $Q=\psi(\la)$ in $B(s,t)$, where $M(Q)$ is equal to the set of parts in $\la$. See Figure \ref{fig:YinYang} for example.

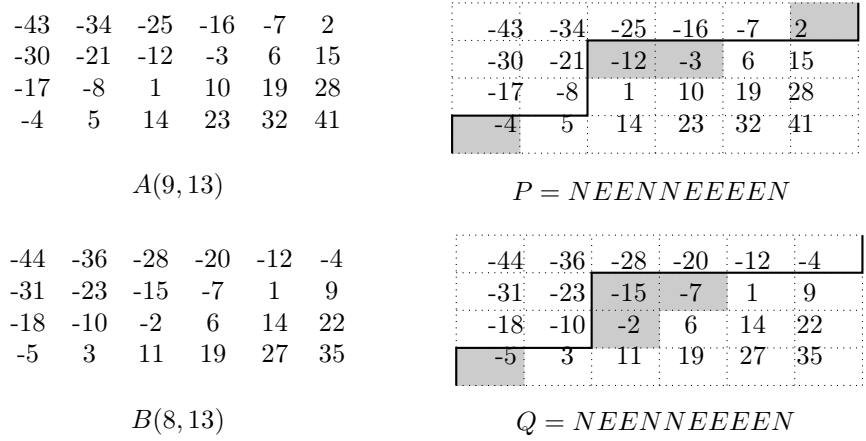
\begin{figure}[ht!]
\centering
\begin{tikzpicture}[scale=.5]
\node at (0,0){
\begin{tabular}{ c c c c c c }
-43 & -34 & -25 & -16 & -7 & 2\\
-30 & -21 & -12 & -3 & 6 & 15\\
-17 & -8 & 1 & 10 & 19 & 28\\
 -4 & 5 & 14 & 23 & 32 & 41
\end{tabular}};
\node at (0,-3) {$A(9,13)$};
\end{tikzpicture}
\qquad \quad
\begin{tikzpicture}[scale=.5]

\filldraw[color=gray!40]
(-5.3,-2) rectangle (-3.5, -1)
(-1.7,0) rectangle (1.9, 1)
(3.7,1) rectangle (5.5, 2)
;
\foreach \i in {0,1,2,3,4}
\draw[dotted] (-5.3,-2+\i)--(5.5,-2+\i);
\foreach \i in {0,1,2,3,4,5,6}
\draw[dotted] (-5.3+1.8*\i,-2)--(-5.3+1.8*\i,2);
\draw[thick] (-5.3,-2)--(-5.3,-1)--(-1.7,-1)--(-1.7,1)--(5.5,1)--(5.5,2);
\node at (0,0){
\begin{tabular}{ c c c c c c }
-43 & -34 & -25 & -16 & -7 & 2\\
-30 & -21 & -12 & -3 & 6 & 15\\
-17 & -8 & 1 & 10 & 19 & 28\\
-4 & 5 & 14 & 23 & 32 & 41
\end{tabular}};

\node at (0,-3) {$P=NEENNEEEEN$};
\end{tikzpicture}\\[2ex]

\begin{tikzpicture}[scale=.5]
\node at (0,0){
\begin{tabular}{ c c c c c c c}
 -44 & -36 & -28 & -20 & -12 & -4 \\ 
 -31 & -23 & -15 & -7 & 1 & 9 \\  
 -18 & -10 & -2 & 6 & 14 & 22\\
 -5 & 3 & 11 & 19 & 27 & 35
\end{tabular}};
\node at (0,-3) {$B(8,13)$};
\end{tikzpicture}
\qquad \quad
\begin{tikzpicture}[scale=.5]
\filldraw[color=gray!40]
(-5.3,-2) rectangle (-3.5, -1)
(-1.7,-1) rectangle (0.1,0)
(-1.7,0) rectangle (1.9, 1)
;
\foreach \i in {0,1,2,3,4}
\draw[dotted] (-5.3,-2+\i)--(5.5,-2+\i);
\foreach \i in {0,1,2,3,4,5,6}
\draw[dotted] (-5.3+1.8*\i,-2)--(-5.3+1.8*\i,2);
\draw[thick] (-5.3,-2)--(-5.3,-1)--(-1.7,-1)--(-1.7,1)--(5.5,1)--(5.5,2);
\node at (0,0){
\begin{tabular}{ c c c c c c c}
 -44 & -36 & -28 & -20 & -12 & -4 \\ 
 -31 & -23 & -15 & -7 & 1 & 9 \\  
 -18 & -10 & -2 & 6 & 14 & 22\\
 -5 & 3 & 11 & 19 & 27 & 35
\end{tabular}};
\node at (0,-3) {$Q=NEENNEEEEN$};
\end{tikzpicture}

\caption{The Yin-Yang diagrams $A(9,13)$ and $B(8,13)$, and the paths $P=\phi((12,4,3,2))$ and $Q=\psi((15,7,5,2))$.}\label{fig:YinYang}
\end{figure}

Now we give path interpretations for doubled distinct $(s,t)$-cores and $(s,t)$-CSYDs for even $s$ and odd $t$
by using this Yin-Yang diagram $B(s,t)$ together with Propositions~\ref{prop:dd} and \ref{prop:CSYD}. 

\begin{thm}\label{thm:dd2} 
 For even $s$ and odd $t$ that are coprime, there is a bijection between the sets  $\mathcal{DD}_{(s,t)}$ and  $\mathcal{NE}((t-1)/2,(s-2)/2)$. In addition,
\[
|\mathcal{DD}_{(s,t)}|=\binom{(s-2)/2 + (t-1)/2}{(s-2)/2}.
\]
\end{thm}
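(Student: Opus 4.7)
The plan is to restrict the Wang--Yang bijection $\psi\colon \mathcal{BC}_{(s,t)} \to \mathcal{NE}((t-1)/2, s/2)$ defined on the Yin--Yang diagram $B(s,t)$. By Proposition~\ref{prop:dd}, a strict partition $\la$ yields an element $\la\la \in \mathcal{DD}_{(s,t)}$ precisely when $\la$ is an $(\ols{s\phantom{t}},\overline{t})$-core with $s/2 \notin \la$. So the task reduces to identifying those paths $Q \in \mathcal{NE}((t-1)/2, s/2)$ for which the parts of $\psi^{-1}(Q)$, read off from $M(Q)$, avoid the value $s/2$.

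First, I would locate the cells $(i,j)$ of $B(s,t)$ with $|B_{i,j}| = s/2$. Writing $B_{i,j} = -\frac{s+2}{2}t + js + it$ and using $\gcd(s,t)=1$ with the range constraints $1\le i\le s/2$ and $1\le j\le (t-1)/2$, a quick divisibility argument rules out $B_{i,j} = +s/2$ entirely, while $B_{i,j} = -s/2$ forces $t\mid (2j+1)$, so $j = (t-1)/2$ and hence $i = 1$. Thus the unique cell of absolute value $s/2$ is the top-right cell $(1,(t-1)/2)$, with value $-s/2$.

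Next, I would determine when the negative entry $-s/2$ lies below $Q$, which is the condition for it to contribute to $M(Q)$. Every path in $\mathcal{NE}((t-1)/2, s/2)$ ends either with an E step (reaching $((t-1)/2, s/2)$ from $((t-1)/2 - 1, s/2)$) or with an N step (reaching it from $((t-1)/2, s/2 - 1)$). In the first case the path traces the top edge of the cell $(1,(t-1)/2)$, placing this cell below $Q$, so $s/2 \in \psi^{-1}(Q)$. In the second case the path traces the right edge, placing the cell above $Q$, so $s/2 \notin \psi^{-1}(Q)$. Therefore $\psi$ restricts to a bijection between $\mathcal{DD}_{(s,t)}$ and the set of paths in $\mathcal{NE}((t-1)/2, s/2)$ whose last step is N.

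Finally, stripping off the terminal N gives the obvious bijection between such paths and $\mathcal{NE}((t-1)/2, (s-2)/2)$, producing the formula
\[
|\mathcal{DD}_{(s,t)}| = \binom{(t-1)/2 + (s-2)/2}{(s-2)/2}.
\]
The main technical obstacle is the divisibility computation pinning down $(1,(t-1)/2)$ as the only cell of absolute value $s/2$; once that is in hand, the path-endpoint analysis is routine geometry.
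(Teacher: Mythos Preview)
Your proof is correct and follows essentially the same route as the paper: restrict the Wang--Yang bijection $\psi$ using Proposition~\ref{prop:dd}, observe that $B_{1,(t-1)/2}=-s/2$ is the only entry of absolute value $s/2$, and conclude that $s/2\notin\la$ exactly when the path ends with an $N$ step. The paper is terser about the uniqueness of that cell (it simply asserts $B_{1,(t-1)/2}=-s/2$), whereas you supply the explicit divisibility check, but the argument is the same.
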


\begin{proof}
Recall the bijection $\psi$ between the sets $\mathcal{BC}_{(s,t)}$ and  $\mathcal{NE}((t-1)/2, s/2)$ in the Yin-Yang diagram $B(s,t)$ from the lower-left corner to the upper-right corner. To find the desired bijection, we restrict the domain of $\psi$ under the set $\mathcal{DD}_{(s,t)}$.
By Proposition~\ref{prop:dd}~(b) and the fact that $B_{1,(t-1)/2}=-s/2$, we see that $Q=\psi(\la)$ corresponds to a partition $\la$ such that $\la\la$ is a doubled distinct $(s,t)$-core if and only if $Q$ is a path in the set $\mathcal{NE}((t-1)/2, s/2)$ in the Yin-Yang diagram $B(s,t)$ that ends with a north step $N$, equivalently $\mathcal{NE}((t-1)/2, (s-2)/2)$.
Hence, the number of doubled distinct $(s,t)$-core partitions is given by $|\mathcal{NE}((t-1)/2, (s-2)/2)|$.
\end{proof}

\begin{thm}\label{thm:CSYD2}
For even $s$ and odd $t$ that are coprime, there is a bijection between the sets $\mathcal{CS}_{(s,t)}$ and
\[
\mathcal{NE}((t-1)/2,(s-2)/2)\cup \mathcal{NE}( (t-3)/2,(s-2)/2).
\]
In addition, 
\[
|\mathcal{CS}_{(s,t)}|=\binom{(s-2)/2 + (t-1)/2}{(s-2)/2}+\binom{(s-2)/2 + (t-3)/2}{(s-2)/2}.
\]
\end{thm}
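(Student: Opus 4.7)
The plan is to mirror the proof of Theorem \ref{thm:dd2}: starting from the bijection $\psi\colon \mathcal{BC}_{(s,t)} \to \mathcal{NE}((t-1)/2, s/2)$ inside the Yin-Yang diagram $B(s,t)$, I will restrict to the subset $\mathcal{CS}_{(s,t)}$ by imposing the extra condition from Proposition \ref{prop:CSYD}, namely $3s/2 \notin \la$.

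First I would locate $3s/2$ in $B(s,t)$. Since row $1$ is an arithmetic progression with common difference $s$ and $B_{1,(t-1)/2} = -s/2$, stepping once to the left yields $B_{1,(t-3)/2} = -3s/2$. A short divisibility argument using $\gcd(s,t) = 1$ together with the ranges $1 \le i \le s/2$ and $1 \le j \le (t-1)/2$ shows that $-3s/2$ appears at this single position only, and that $3s/2$ itself never appears in $B(s,t)$. Therefore $3s/2$ is a part of $\la$ if and only if the cell at matrix position $(1,(t-3)/2)$ lies below the path $Q = \psi(\la)$.

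Next I would translate "above the path" into a combinatorial condition on $Q$. The cell in question sits in the top row of the grid, so it is above $Q$ exactly when the unique east step of $Q$ in column $(t-3)/2$ occurs at height strictly less than $s/2$. Writing $x^*$ for the smallest column at which $Q$ first reaches height $s/2$, this is equivalent to $x^* \ge (t-3)/2$; since $x^* \in \{0,1,\dots,(t-1)/2\}$ the only options are $x^* = (t-1)/2$ (the final step is $N$) and $x^* = (t-3)/2$ (the last two steps are $NE$, where the $N$ is the one that first enters the top row). The two cases are disjoint because their last steps differ.

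Finally I would realize the bijection by stripping: in the first case remove the terminal $N$ to land in $\mathcal{NE}((t-1)/2, (s-2)/2)$; in the second case remove the terminal $NE$ to land in $\mathcal{NE}((t-3)/2, (s-2)/2)$. Each target path extends uniquely back to a valid $Q$, so one obtains a bijection with the disjoint union, and the binomial count is immediate. The step that demands the most care is confirming that $-3s/2$ appears only at $(1,(t-3)/2)$ in $B(s,t)$, since the whole argument hinges on the CSYD obstruction being controlled by that single cell; one should also verify that the formula degenerates sensibly in the corner case $t=3$, where the column $(t-3)/2 = 0$ disappears, the condition $3s/2 \notin \la$ becomes vacuous, and the second binomial coefficient collapses to $1$, still producing the correct total.
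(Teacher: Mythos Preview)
Your proposal is correct and follows essentially the same approach as the paper: both arguments locate $-3s/2$ at the top-row cell $B_{1,(t-3)/2}$ of the Yin-Yang diagram, conclude that the CSYD condition forces $\psi(\la)$ to end in either $N$ or $NE$, and strip this suffix to obtain the bijection. The only cosmetic difference is that the paper organizes the two cases via the dichotomy $s/2\notin\la$ versus $s/2\in\la$ (invoking Theorem~\ref{thm:dd2} for the former), whereas you handle both cases uniformly by tracking the column $x^*$ at which the path first reaches height $s/2$.
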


\begin{proof}
It follows from Propositions~\ref{prop:bar} and \ref{prop:CSYD} that $\la$ is an $(s,t)$-CSYD if and only if $\la$ is an $(\ols{s\phantom{t}},\overline{t})$-core partitions and $3s/2 \notin \la$. 
We first note that $\la\la$ is a doubled distinct $(s,t)$-core partition if and only if $\la$ is an $(s,t)$-CSYD and $s/2 \notin \la$. Indeed, there is a bijection between the set of $(s,t)$-CSYDs $\la$ with $s/2 \notin \la$ and the set $\mathcal{NE}((t-1)/2, (s-2)/2)$ by Theorem~\ref{thm:dd2}. Therefore, it is sufficient to show that there is a bijection between the set of $(s,t)$-CSYDs $\la$ with $s/2 \in \la$ and the set $\mathcal{NE}((t-3)/2,(s-2)/2)$.

Note that for an $(s,t)$-CSYD $\la$ such that $s/2 \in \la$, $Q=\psi(\la)$ is a path in the set $\mathcal{NE}((t-1)/2, s/2)$ in the Yin-Yang diagram $B(s,t)$ that must end with an east step preceded by a north step since $B_{1,(t-1)/2}=-s/2$ and $B_{1,(t-3)/2}=-3s/2$.
Then, we get a bijection between the set of $(s,t)$-CSYDs $\la$ with $s/2 \in \la$ and the set  $\mathcal{NE}((t-3)/2,(s-2)/2)$. Moreover, the number of $(s,t)$-CSYDs is obtained by counting the corresponding lattice paths.
\end{proof}

\begin{proof}[Proof of Theorem \ref{thm:main1}]
It is followed by Remark \ref{rmk:oddoddodd}, Theorems \ref{thm:selfbar}, \ref{thm:dd2}, and \ref{thm:CSYD2}
\end{proof}


\section{Results on $(s,s+d,s+2d)$-cores}\label{sec:triple}

A path $P$ is called a \emph{free Motzkin path of type $(s,t)$} if it is a path from $(0,0)$ to $(s,t)$ which consists of steps $U=(1,1)$, $F=(1,0)$, and $D=(1,-1)$. Let $\mathcal{F}(s,t)$ be the set of free Motzkin paths of type $(s,t)$.
For given sets $A,B$ of sequences of steps, we denote
$\mathcal{F}(s,t \,;\, A,B)$ the set of free Motzkin paths $P$ of type $(s,t)$, where $P$ does not start with the sequences in the set $A$ and does not end with the sequences in the set $B$.

Recently, Cho and Huh \cite[Theorem 8]{ChoHuh} and Yan, Yan, and Zhou \cite[Theorems 1.1 and 1.2]{YYZ2} found a free Motzkin path interpretation of self-conjugate $(s,s+d,s+2d)$-core partitions and enumerated them independently.

\begin{thm}\cite{ChoHuh,YYZ2}
For coprime positive integers $s$ and $d$, there is a bijection between the sets $\mathcal{SC}_{(s,s+d,s+2d)}$ and
\begin{enumerate}
\item[(a)] $\mathcal{F}\left((s+d-1)/2,-d/2\right)$ if $s$ is odd and $d$ is even;
\item[(b)] $\mathcal{F}\left((s+d)/2,-(d+1)/2 \,;\, \emptyset,\{U\}\right)$ if $s$ is odd and $d$ is odd;
\item[(c)] $\mathcal{F}\left((s+d+1)/2,-(d+1)/2 \,;\, \emptyset,\{U\}\right)$ if $s$ is even and $d$ is odd.
\end{enumerate}
In addition, the number of self-conjugate $(s,s+d,s+2d)$-core partitions is
\[
\displaystyle
|\mathcal{SC}_{(s,s+d,s+2d)}|=
\begin{cases}
&\displaystyle\sum_{i=0}^{\lfloor s/4 \rfloor} \binom{(s+d-1)/2 }{i, d/2+i,  (s-1)/2-2i} \qquad \text{if $d$ is even,}\\
&\\
&\displaystyle\sum_{i=0}^{\lfloor s/2\rfloor} \binom{\lfloor (s+d-1)/2 \rfloor}{\lfloor i/2 \rfloor, \lfloor (d+i)/2\rfloor, \lfloor s/2 \rfloor -i} \quad \text{if $d$ is odd.}
\end{cases}
\]
\end{thm}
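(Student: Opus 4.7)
The plan is to encode each self-conjugate $(s,s+d,s+2d)$-core partition $\la$ by its first-column hook set $H=H(\la)$ --- which for a self-conjugate partition consists of distinct odd positive integers --- and to translate the triple core condition into a combinatorial structure naturally enumerated by free Motzkin paths.

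I would begin with the classical correspondence $\la\leftrightarrow H(\la)$ between self-conjugate partitions and finite sets of distinct odd positive integers, together with the characterization that $\la$ is an $s$-core iff $H(\la)$ is closed under subtracting $s$ (i.e.\ $h\in H$ and $h>s$ force $h-s\in H$, and $s\notin H$). Combining the three core conditions, $\la\in\mathcal{SC}_{(s,s+d,s+2d)}$ is equivalent to $H$ being a finite set of distinct odd positive integers, avoiding the numerical semigroup $\langle s,s+d,s+2d\rangle$ and closed under subtracting any of $s$, $s+d$, or $s+2d$. Since $\gcd(s,d)=1$, this semigroup has finite complement, so $H$ lies inside an explicit finite set.

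Next I would arrange the admissible odd integers into a two-dimensional grid: one coordinate records a residue class modulo a suitable modulus (e.g.\ $s+d$) and the other records the number of steps taken under the subtract-$s$ operation. The closure conditions force $H$ to be the set of grid cells lying in a staircase-shaped order ideal, whose residue axis plays the role of vertical position and whose subtract axis plays the role of horizontal position. Reading the upper boundary of this ideal column by column produces a lattice path with up, flat, and down steps --- a free Motzkin path --- whose horizontal length and signed vertical drop match the parameters $(s+d-1)/2$, $(s+d)/2$, or $(s+d+1)/2$ and $-d/2$ or $-(d+1)/2$ appearing in the theorem, according to the parities of $s$ and $d$. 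In cases (b) and (c), a parity obstruction (the top-right cell of the grid would have to be filled by an even integer) prevents the path from ending with $U$, yielding the constraint $B=\{U\}$.

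I expect the most delicate step to be the parity bookkeeping: verifying that the grid dimensions match the claimed Motzkin parameters in each of the three cases $(s,d)\in\{(\text{odd},\text{even}),(\text{odd},\text{odd}),(\text{even},\text{odd})\}$ (both even is excluded by coprimality), and confirming the ``no trailing $U$'' obstruction in the odd-$d$ cases. Once the bijection is in place, the enumeration is a routine multinomial count: a free Motzkin path of type $(n,-k)$ with $i$ up-steps has $i+k$ down-steps and $n-2i-k$ flat-steps, contributing $\binom{n}{i,\,i+k,\,n-2i-k}$, and summing over admissible $i$ yields the two closed forms in the theorem, with the pairing of up-steps forced in the odd-$d$ case producing the floor functions $\lfloor i/2\rfloor$ and $\lfloor(d+i)/2\rfloor$ in the multinomial.
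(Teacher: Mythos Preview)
The paper does not prove this theorem; it is quoted from \cite{ChoHuh,YYZ2} as background, so there is no in-paper proof to compare against directly. However, Section~\ref{sec:triple} develops the analogous construction for bar-cores, CSYDs, and doubled distinct cores via the $(\overline{s+d},d)$-abacus and its abacus function $f$, and that construction is modelled on the one in \cite{ChoHuh} for the self-conjugate case. Your outline --- encode the partition by a hook-type set, lay it on a two-dimensional grid keyed by residues modulo $s+d$, and read the upper profile of the resulting order ideal as a free Motzkin path --- is exactly that approach, so at the strategic level your plan matches the literature and the paper's own parallel arguments.

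Two points in your sketch need correcting before the details go through. First, the set that consists of distinct odd positive integers for a self-conjugate $\la$ is the \emph{main-diagonal} hook set $\{h_{i,i}(\la)\}$, not the first-column hook set $\{h_{i,1}(\la)\}$; for instance $\la=(3,1,1)$ is self-conjugate with first-column hooks $5,2,1$. Second, whichever encoding you use, the $s$-core condition is not merely ``closed under subtracting $s$'': for the diagonal hook set one needs closure under subtracting $2s$ \emph{together with} the condition that no two elements sum to a multiple of $2s$ (this is the Ford--Mai--Sze characterization underlying \cite{FMS}). It is precisely this sum-avoidance condition that folds the abacus down to $\lfloor(s+d)/2\rfloor$ columns and produces the path lengths $(s+d-1)/2$, $(s+d)/2$, $(s+d+1)/2$ in the three parity cases; without it the grid dimensions will not match. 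Once these two points are fixed your argument coincides with the abacus proof in \cite{ChoHuh}, and the enumeration step you describe is then correct as stated.
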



Similar to the construction in \cite{ChoHuh}, we give an abacus construction and a path interpretation for each set of $(\ols{s\phantom{d}},\overline{s+d},\overline{s+2d})$-core partitions, doubled distinct $(s,s+d,s+2d)$-core partitions, and $(s,s+d,s+2d)$-CSYDs.\\


\subsection{$(\ols{s\phantom{d}},\overline{s+d},\overline{s+2d})$-core partitions}\label{sec:bar}

For coprime positive integers $s$ and $d$, let the \emph{$(\overline{s+d},d)$-abacus diagram} be a diagram with infinitely many rows labeled by $i \in \mathbb{Z}$ and $\floor*{(s+d+2)/2}$ columns labeled by $j \in \{0,1,\dots,\floor*{(s+d)/2}\}$ from bottom to top and left to right whose position $(i,j)$ is labeled by $(s+d)i+dj$.

The following proposition guarantees that, for each positive integer $h$, there is at least one position on the $(\overline{s+d},d)$-abacus diagram labeled by either $h$ or $-h$.

\begin{prop} \label{prop:injection}
Let $s$ and $d$ be coprime positive integers and $h$ be a positive integer. For a given $(\overline{s+d},d)$-abacus diagram, we get the following properties.
\begin{itemize}
\item[(a)] If $h\not\equiv 0, (s+d)/2 \pmod{s+d}$, then there exists a unique position labeled by $h$ or $-h$.
\item[(b)] If $h\equiv 0 \pmod{s+d}$, then there are two positions labeled by $h$ and $-h$, respectively, in the first column. 
\item[(c)] If $s+d$ is even and $h\equiv (s+d)/2 \pmod{s+d}$, then there are two positions labeled by $h$ and $-h$, respectively, in the last column. 

\end{itemize} 
\end{prop}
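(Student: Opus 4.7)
The approach is to exploit that $\gcd(d, s+d) = \gcd(d, s) = 1$, so multiplication by $d$ is a bijection on $\mathbb{Z}/(s+d)\mathbb{Z}$. Consequently, for every integer $m$ there is a unique $j_m \in \{0, 1, \ldots, s+d-1\}$ with $dj_m \equiv m \pmod{s+d}$, and then a unique $i_m \in \mathbb{Z}$ with $(s+d)i_m + dj_m = m$. On the ``full'' $(s+d)$-column abacus every integer would thus occur as a label exactly once, and the proposition reduces to checking which of $m \in \{h, -h\}$ has its representative $j_m$ inside the truncated range $\{0, 1, \ldots, \lfloor(s+d)/2\rfloor\}$.

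The relation $dj_h + dj_{-h} \equiv 0 \pmod{s+d}$ forces $j_{-h} = (s+d) - j_h$ when $j_h > 0$, and $j_{-h} = 0$ when $j_h = 0$. So the columns of $h$ and $-h$ are paired by the involution $j \leftrightarrow (s+d) - j$ on $\{0, 1, \ldots, s+d-1\}$. This involution always fixes $j = 0$, and when $s+d$ is even it also fixes $j = (s+d)/2$. Note that $s+d$ even forces both $s$ and $d$ to be odd (by coprimality), and a direct check gives $d(s+d)/2 \equiv (s+d)/2 \pmod{s+d}$, so the last column captures precisely the residue class $(s+d)/2 \pmod{s+d}$.

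The three parts are then immediate from this bookkeeping. For (a), the hypothesis excludes both fixed points of the involution, so $j_h$ and $j_{-h} = (s+d)-j_h$ are distinct and symmetric about $(s+d)/2$; exactly one of them lies in $\{1, \ldots, \lfloor(s+d)/2\rfloor\}$, which gives the unique position on the abacus labeled by $h$ or $-h$. For (b), the residue hypothesis gives $j_h = j_{-h} = 0$, and the two positions $(\pm h/(s+d), 0)$ occupy the first column. For (c), $j_h = j_{-h} = (s+d)/2$, so both labels $\pm h$ live in the last column at distinct rows. The only substantive step is the identification of which residue class populates the last column, settled by the computation $d(s+d)/2 \equiv (s+d)/2 \pmod{s+d}$ quoted above; everything else is just uniqueness of representatives modulo $s+d$.
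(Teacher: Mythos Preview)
Your proof is correct and follows essentially the same route as the paper's: both arguments rest on the fact that $\gcd(d,s+d)=1$ makes multiplication by $d$ a bijection on $\mathbb{Z}/(s+d)\mathbb{Z}$, and then analyze when $dj_1\equiv -dj_2\pmod{s+d}$ (equivalently, your involution $j\leftrightarrow (s+d)-j$) to pick out the fixed columns $j=0$ and $j=(s+d)/2$. Your framing via the ``full'' abacus and the folding involution is a tidy way to package the same counting the paper does directly, and your explicit verification that $d\cdot(s+d)/2\equiv (s+d)/2\pmod{s+d}$ fills in a step the paper leaves implicit.
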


\begin{proof}
In the $(\overline{s+d},d)$-abacus diagram, the absolute values of the labels in column $j$ are
congruent to $dj$ or $-dj$ modulo $s+d$.
We claim that $dj$ and $-dj$ for  $j\in\{0,1,\dots, \floor*{(s+d)/2}$\} are all incongruent modulo $s+d$ except $j=0$ or $(s+d)/2$. 
For $0 \leq j_1 < j_2\leq \floor*{(s+d)/2}$, it is clear that $dj_1$ and $dj_2$ are incongruent modulo $s+d$.
Suppose $dj_1 \equiv -dj_2 \pmod{s+d}$ for some $0 \leq j_1,j_2\leq \floor*{(s+d)/2}$, it follows that $d(j_1+j_2)$ is a multiple of $s+d$. Since $s$ and $d$ are coprime, $d(j_1+j_2)$ is not a multiple of $s+d$ except for $j_1=j_2=0$ or $j_1=j_2=(s+d)/2$, where both $s$ and $d$ are odd. This completes the proof of the claim. 
The claim implies that, for every positive integer $h$, there exists $j\in\{0,1,\dots, \floor*{(s+d)/2}\}$ such that $h$ is congruent to $dj$ or $-dj$ modulo $s+d$. In addition, if $h\not\equiv 0, (s+d)/2 \pmod{s+d}$, then there exists a unique position labeled by $h$ or $-h$ in the $(\overline{s+d},d)$-abacus diagram, which shows the statement (a). The statements (b) and (c) follows immediately. 
\end{proof}
 
For a strict partition $\la=(\la_1,\la_2,\dots)$, the \emph{$(\overline{s+d},d)$-abacus of $\la$} is obtained from the $(\overline{s+d},d)$-abacus diagram by placing a bead on position labeled by $\la_i$ if exists. Otherwise, we place a bead on position labeled by $-\la_i$. A position without bead is called a \emph{spacer}. See Figure \ref{fig:abacus_bar} for example.
We use this $(\overline{s+d},d)$-abacus when we deal with $(\ols{s\phantom{d}},\overline{s+d},\overline{s+2d})$-core partitions.
For the $(\overline{s+d},d)$-abacus of an $(\ols{s\phantom{d}},\overline{s+d},\overline{s+2d})$-core partition $\la$, let $r(j)$ denote the row number such that position $(r(j),j)$ is labeled by a positive integer while position $(r(j)-1,j)$ is labeled by a non-positive integer. The arrangement of beads on the diagram can be determined by the following rules. 

\begin{lem}\label{lem:beads}
Let $\la$ be a strict partition. For coprime positive integers $s$ and $d$, if $\la$ is an $(\ols{s\phantom{d}},\overline{s+d},\overline{s+2d})$-core, then the $(\overline{s+d},d)$-abacus of $\la$ satisfies the following.

\begin{enumerate}
\item[(a)] If a bead is placed on position $(i,j)$ such that $i> r(j)$, then a bead is also placed on each of positions $(i-1,j), (i-2,j), \dots, (r(j),j)$.
\item[(b)] If a bead is placed on position $(i,j)$ such that $i< r(j)-1$, then a bead is also placed on each of positions $(i+1,j), (i+2,j), \dots, (r(j)-1,j)$. 
\item[(c)] For each $j$, at most one bead is placed on positions $(r(j),j)$ or $(r(j)-1,j)$.
\end{enumerate}
\end{lem}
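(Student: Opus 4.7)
My plan is to handle all three parts together by applying Proposition~\ref{prop:bar} with the single modulus $s+d$, since the only hypothesis actually needed is that $\lambda$ is an $\overline{s+d}$-core. The geometric key is that moving one row up or down in a fixed column of the $(\overline{s+d},d)$-abacus changes the label by exactly $s+d$, so the three bar-core axioms translate directly into statements about beads.

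For (a), a bead at $(i,j)$ with $i>r(j)$ sits on a positive label $\ell=(s+d)i+dj\ge (s+d)(r(j)+1)+dj>s+d$, so $\ell$ is a part of $\lambda$ with $\ell>s+d$. Proposition~\ref{prop:bar}(b) then says $\ell-(s+d)\in\lambda$, and this is the still-positive label at position $(i-1,j)$, so a bead sits there. Iterating the argument down through the column supplies beads at every one of $(i-1,j),(i-2,j),\ldots,(r(j),j)$.

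For (b), the dual argument applies. A bead at $(i,j)$ with $i<r(j)-1$ lies on a non-positive label $\ell=(s+d)i+dj$ and represents the part $-\ell$. The inequality $i\le r(j)-2$ combined with the definition of $r(j)$ gives $-\ell\ge s+d$; equality would force $\ell=-(s+d)$, which by coprimality of $s+d$ and $d$ (cf.\ Proposition~\ref{prop:injection}) only happens at $j=0$, $i=-1$, but then $s+d\in\lambda$ contradicts Proposition~\ref{prop:bar}(a). Hence $-\ell>s+d$, and Proposition~\ref{prop:bar}(b) produces the part $-\ell-(s+d)$, which is the absolute value of the (necessarily negative, by the same coprimality argument) label at $(i+1,j)$; a bead thus appears at $(i+1,j)$. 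Iterating yields beads at $(i+1,j),(i+2,j),\ldots,(r(j)-1,j)$.

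For (c), suppose beads sit at both $(r(j),j)$ and $(r(j)-1,j)$, giving parts $\lambda_a=(s+d)r(j)+dj$ and $\lambda_b=-((s+d)(r(j)-1)+dj)$ of $\lambda$. Their sum is $\lambda_a+\lambda_b=s+d\equiv 0\pmod{s+d}$, which Proposition~\ref{prop:bar}(c) forbids unless $s+d$ is even and $\lambda_a\equiv\lambda_b\equiv(s+d)/2\pmod{s+d}$. In that exceptional case $s$ and $d$ are both odd, and by Proposition~\ref{prop:injection}(c) the only column whose labels lie in the residue class $(s+d)/2$ is $j=(s+d)/2$. A short direct computation in that column pins down $r(j)=-(d-1)/2$ and forces $\lambda_a=\lambda_b=(s+d)/2$, violating the strictness of $\lambda$. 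This exception-chasing in (c) is the only real obstacle; parts (a) and (b) amount to reading Proposition~\ref{prop:bar}(b) through the abacus coordinates.
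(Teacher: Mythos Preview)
Your proof is correct and follows essentially the same approach as the paper: all three parts are deduced from Proposition~\ref{prop:bar} applied with modulus $s+d$, translating the bar-core axioms into the abacus geometry via the observation that a vertical step changes the label by $s+d$. You are in fact slightly more careful than the paper in part~(b), where you verify the strict inequality $-\ell>s+d$ needed to invoke Proposition~\ref{prop:bar}(b) (ruling out the $j=0$, $i=-1$ edge case), and in part~(c), where you explicitly cite Proposition~\ref{prop:injection}(c) to localize the exceptional column and compute $r(j)$; the paper handles the exception by appealing directly to the abacus convention that a bead for the part $(s+d)/2$ is always placed on the positive label, which is the same contradiction phrased differently.
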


\begin{proof}
\begin{enumerate} 
\item[(a)] The fact that a bead is placed on position $(i,j)$ with $i>r(j)$ implies that $(s+d)i+dj$ is a part in $\la$. Since $\la$ is an $(\overline{s+d})$-core, it follows from Proposition~\ref{prop:bar}~(b) that $(s+d)(i-1)+dj$ is a part in $\la$. In a similar way, we also have $(s+d)(i-2)+dj, \dots, (s+d)r(j)+dj \in \la$ so that a bead is placed on each of positions $(i-1,j), (i-2,j), \dots, (r(j),j)$.
\item[(b)] If a bead is placed on position $(i,j)$ with $i<r(j)-1$, then $-(s+d)i-dj$ is a part in $\la$. Again, it follows from Proposition~\ref{prop:bar}~(b) that $-(s+d)(i+1)-dj$ is a part in $\la$ and so are  $-(s+d)(i+2)-dj, \dots, -(s+d)(r(j)-1)-dj \in \la$. Thus, we place a bead on  positions $(i+1,j), (i+2,j), \dots, (r(j)-1,j)$.
\item[(c)] Suppose that beads are placed on both positions $(r(j),j)$ and $(r(j)-1,j)$ labeled by $(s+d)r(j)+dj$ and $(s+d)(r(j)-1)+dj$, respectively.
One can notice that $(s+d)(r(j)-1)+dj$ is a non-positive integer and the sum of the absolute values of $(s+d)r(j)+dj$ and $(s+d)(r(j)-1)+dj$ is $s+d$, which contradicts to Proposition~\ref{prop:bar}~(c). In particular, if one of them is labeled by $(s+d)/2$, then the other must be labeled by $-(s+d)/2$, which is also a contradiction to the definition of the $(\overline{s+d},d)$-abacus. 
\end{enumerate}
\end{proof}

For an $(\ols{s\phantom{d}},\overline{s+d},\overline{s+2d})$-core partition $\la$, in order to explain the properties of the $(\overline{s+d},d)$-abacus of $\la$ more simply, we define the \emph{$(\overline{s+d},d)$-abacus function of $\la$}
\[
f:\{0,1,\dots,\lfloor (s+d)/2 \rfloor\}\rightarrow \mathbb{Z}
\]
as follows:
For each $j \in \{0,1,\dots,\lfloor (s+d)/2 \rfloor\}$, 
if there is a bead labeled by a positive integer in column $j$, let $f(j)$ be the largest row number in column $j$, where a bead is placed on. Otherwise, let $f(j)$ be the largest row number in column $j$, where position $(f(j),j)$ is a spacer with a non-positive labeled number. 

The following propositions give some basic properties of the $(\overline{s+d},d)$-abacus function of an $(\ols{s\phantom{d}},\overline{s+d},\overline{s+2d})$-core partition. 

\begin{prop}\label{prop:f_initial}
Let $s$ and $d$ be coprime positive integers. If $\la$ is an $(\ols{s\phantom{d}},\overline{s+d},\overline{s+2d})$-core partition, then the $(\overline{s+d},d)$-abacus function $f$ of $\la$ satisfies the following.
\begin{enumerate}
\item[(a)] $f(0)=0$ and $f(1)=0$ or $-1$.
\item[(b)] $f(j-1)$ is equal to one of the three values $f(j)-1$, $f(j)$, and $f(j)+1$ for $j=1,2,\dots, \lfloor(s+d)/2\rfloor$.
\end{enumerate}
\end{prop}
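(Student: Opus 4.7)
My plan is as follows.

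Part (a) follows directly from Lemma~\ref{lem:beads} and Proposition~\ref{prop:bar}(a). For column $0$, the condition $s+d\notin\la$ forces $(1,0)$ to be a spacer; since the label $0$ at $(0,0)$ is not a part of $\la$, Lemma~\ref{lem:beads}(a,b) propagates to show column $0$ has no beads at all, so $f(0)=0$. For column $1$, the conditions $s,s+2d\notin\la$ force $(-1,1)$ and $(1,1)$ to be spacers, and Lemma~\ref{lem:beads} then leaves $(0,1)$ (labeled $d$) as the only possible bead, giving $f(1)\in\{0,-1\}$.

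For part (b), the crucial observation is the pair of algebraic identities
\[
(s+d)i+dj-s=(s+d)(i-1)+d(j+1),\qquad (s+d)i+dj-(s+2d)=(s+d)(i-1)+d(j-1),
\]
which translate the subtraction rules of Proposition~\ref{prop:bar}(b) into abacus shifts $(i,j)\mapsto(i-1,j+1)$ for $\overline{s}$-cores and $(i,j)\mapsto(i-1,j-1)$ for $\overline{s+2d}$-cores, with mirror shifts $(i,j)\mapsto(i+1,j-1)$ and $(i,j)\mapsto(i+1,j+1)$ applying to non-positive-side beads. Consequently, a bead at $(i,j)$ whose corresponding part exceeds $s$ (respectively $s+2d$) forces a bead at the shifted position. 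Applied to the top positive bead of column $j-1$, the $\overline{s}$-shift yields the upper bound $f(j-1)\le f(j)+1$; applied to the top positive bead of column $j$, the $\overline{s+2d}$-shift yields the lower bound $f(j-1)\ge f(j)-1$.

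The main obstacle is the case analysis when these direct shift arguments are vacuous. By Lemma~\ref{lem:beads}, each column lies in one of three configurations: (I) positive-side beads, (II) no beads, or (III) non-positive-side beads. When a relevant topmost bead has label too small for Proposition~\ref{prop:bar}(b) to apply---for instance, $f(j-1)=0$ with bead at $(0,j-1)$ of label $d(j-1)\le s$---one replaces the shift argument by a pair-sum contradiction from Proposition~\ref{prop:bar}(c): if $f(j)\le -2$, then the parts $d(j-1)\in\la$ and $s+d-dj\in\la$ sum to $s$, a forbidden pair-sum. The even-$s$ exception of Proposition~\ref{prop:bar}(c) would force $d=1$, $s$ even, and $j=s/2+1$, exceeding the column range $\lfloor(s+d)/2\rfloor=s/2$; an analogous check disposes of the $\overline{s+2d}$-core exceptional case. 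The remaining configurations---particularly when both columns are free of positive-side beads, so that one must combine mirror shifts from non-positive-side beads with pair-sum contradictions for non-positive-part sums---are handled similarly. The technical burden of the proof lies in verifying that no exceptional subcase of Proposition~\ref{prop:bar}(c) actually arises within the column range $j\le\lfloor(s+d)/2\rfloor$.
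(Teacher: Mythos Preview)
Your proposal is correct and uses the same underlying ingredients as the paper---the diagonal identities $(s+d)i+dj-s=(s+d)(i-1)+d(j+1)$ and $(s+d)i+dj-(s+2d)=(s+d)(i-1)+d(j-1)$, Proposition~\ref{prop:bar}(b) for propagation, and Proposition~\ref{prop:bar}(c) for pair-sum contradictions---but the organization differs. You split into configurations (I)--(III) for each of the two columns $j-1$ and $j$ separately, applying the $\overline{s}$-shift from the top bead of column $j-1$ for the upper bound and the $\overline{s+2d}$-shift from the top bead of column $j$ for the lower bound, and then patch the small-label boundary cases with explicit pair-sum checks and verification that the even-modulus exception of Proposition~\ref{prop:bar}(c) never lands in the column range.

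The paper instead anchors the entire argument at the single position $(f(j),j)$ and splits into just two cases: either a bead sits there with positive label, or it is a spacer with non-positive label. In each case both inequalities are obtained directly by examining the three neighbouring positions $(i-1,j-1)$, $(i+1,j)$, $(i+2,j-1)$ (respectively $(i-1,j-1)$, $(i+1,j)$, $(i+2,j-1)$ in the spacer case). This avoids your $3\times 3$ configuration split and makes the exceptional-subcase bookkeeping essentially invisible: when the label at $(i-1,j-1)$ is non-positive, its absolute value and the label at $(i,j)$ sum to $s+2d$, and the ``both congruent to $(s+2d)/2$'' exception cannot arise because Proposition~\ref{prop:injection} guarantees only one of the two positions $\pm(s+2d)/2$ lies in the diagram. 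Your approach certainly works, but the paper's choice of anchor point reduces the case analysis from roughly nine configurations with several boundary patches down to two short paragraphs.
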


\begin{proof}
We consider the $(\overline{s+d},d)$-abacus of $\la$. 
\begin{enumerate}
\item[(a)] Since positions $(0,0)$ and $(1,0)$ are  
labeled by $0$ and $s+d$, respectively, there is no bead in column $0$. Hence, $f(0)=0$. Similarly, since positions $(-1,1)$, $(0,1)$, and $(1,1)$ are labeled by $-s$, $d$, and $s+2d$ respectively, there is at most one bead on position $(0,1)$ in column $1$. Hence, $f(1)=0$ or $-1$.
\item[(b)] For a fixed $j$, let $f(j)=i$.
Suppose that a bead is placed on position $(i,j)$ which is labeled by a positive integer. If position $(i-1,j-1)$ is labeled by a positive integer, then a bead is placed on this position by Proposition~\ref{prop:bar}~(b). Otherwise, position $(i-1,j-1)$ is a spacer by Proposition~\ref{prop:bar}~(c). In any case, it follows from the definition of $f$ that $f(j-1)\geq f(j)-1$. Additionally, since position $(i+1,j)$ is a spacer, position $(i+2,j-1)$ is a spacer by Proposition~\ref{prop:bar}~(b). Hence, $f(j-1)\leq f(j)+1$.

Next, suppose that position $(i,j)$ is a spacer which is labeled by a negative integer. Since position $(i-1,j-1)$ is labeled by a negative integer, it is a spacer, so $f(j-1)\geq f(j)-1$. We now assume that $f(j-1)\geq i+2$. If position $(i+2,j-1)$ is labeled by a positive integer, then a bead is placed on this position by Lemma~\ref{lem:beads}~(a). In this case, position $(i+1,j)$ either has with a bead labeled by a positive integer or is a spacer labeled by a negative integer by Proposition~\ref{prop:bar}~(b) and (c), which contradicts to $f(j)=i$. Otherwise, if position $(i+2,j-1)$ is labeled by a negative integer, then it is a spacer. Therefore, position $(i+1,j)$ is a spacer by Proposition~\ref{prop:bar}~(b), which also contradicts to $f(j)=i$. Hence, $f(j-1)\leq f(j)+1$.

\end{enumerate}
\end{proof}

\begin{prop}\label{prop:barf}
Let $s$ and $d$ be coprime integers. For an $(\ols{s\phantom{d}},\overline{s+d},\overline{s+2d})$-core partition $\la$, the $(\overline{s+d},d)$-abacus function $f$ of $\la$ satisfies the following.
\begin{enumerate}
    \item [(a)] If $s$ is odd and $d$ is even, then $f(\frac{s+d-1}{2})\in \{-\frac{d+2}{2}, -\frac{d}{2}\}$.
    \item [(b)] If $s$ and $d$ are both odd, then $f(\frac{s+d}{2}) \in  \{-\frac{d+1}{2},-\frac{d-1}{2}\}$. In addition, $f(\frac{s+d-2}{2})=-\frac{d+1}{2}$ when $f(\frac{s+d}{2})=-\frac{d-1}{2}$.
    \item [(c)] If $s$ is even and $d$ is odd, then $f(\frac{s+d-1}{2})\in \{-\frac{d+3}{2}, -\frac{d+1}{2},  -\frac{d-1}{2}\}$.
\end{enumerate}
\end{prop}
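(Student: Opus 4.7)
The plan is to locate, in each case, the threshold row $r(j_0)$ of the last column $j_0$ (and also of the second-to-last column $j_0 - 1 = (s+d-2)/2$ in case (b)), identify the small-value labels at the positions $(r(j_0), j_0), (r(j_0)-1, j_0), (r(j_0)-2, j_0),\ldots$, and then use Proposition~\ref{prop:bar} simultaneously for $\overline{s}$, $\overline{s+d}$, and $\overline{s+2d}$ together with Lemma~\ref{lem:beads} to constrain $f(j_0)$. Explicitly, in case (a) one computes $r(j_0) = 1 - d/2$ with relevant labels $s+d/2, -d/2, -(s+3d/2)$; in case (b), column $(s+d)/2$, $r(j_0) = -(d-1)/2$ with labels $\pm(s+d)/2, \pm 3(s+d)/2$; in case (c) $r(j_0) = -(d-1)/2$ with labels $s/2, -(s/2+d), -(3s/2+2d)$. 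A bead on a positive-label row encodes that the label itself is a part of $\la$, while a bead on a non-positive-label row encodes the same for the negation of that label.

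The heart of each case is a short implication chain converting a hypothetical bead into a forbidden pair of beads. In case (a), a bead at $(r(j_0), j_0)$ means $s+d/2 \in \la$, which by $\overline{s}$-core (b) (subtracting $s$) forces $d/2 \in \la$, i.e., a bead at $(r(j_0)-1, j_0)$; this violates Lemma~\ref{lem:beads}~(c), so no positive-region bead can exist and $f(j_0) \in \{r(j_0)-1, r(j_0)-2\}$. The option $f(j_0) < -(d+2)/2$ is ruled out because simultaneous beads at $(r(j_0)-1, j_0)$ and $(r(j_0)-2, j_0)$ encode $d/2 + (s+3d/2) = s+2d \equiv 0 \pmod{s+2d}$, violating $\overline{s+2d}$-core (c); the central exception does not apply since $s+2d$ is odd in case (a). Case (c) follows the same template with three admissible rows distinguished by whether $s/2 \in \la$ and whether $s/2+d \in \la$, the lowest elimination using $\overline{s+2d}$-core (b) instead: $3s/2 + 2d \in \la$ would imply $s/2 \in \la$ on subtracting $s+2d$, contradicting the standing assumption $s/2 \notin \la$ that underlies that branch.

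Case (b) carries one extra ingredient: every label in column $(s+d)/2$ is $\equiv (s+d)/2 \pmod{s+d}$, and by the placement rule every part $\la_i \equiv (s+d)/2 \pmod{s+d}$ is assigned to a positive-label position, so the entire non-positive region of that column consists of spacers. This immediately forces $f((s+d)/2) \in \{r(j_0), r(j_0)-1\}$ once I rule out beads at $(r(j_0)+k, j_0)$ for $k \ge 1$ by combining $(s+d)/2, 3(s+d)/2 \in \la$ with $\overline{s}$-core (b) to produce $(s+3d)/2 \in \la$, and then invoking $\overline{s+2d}$-core (c) via $(s+d)/2 + (s+3d)/2 = s+2d$ (no exception since $s+2d$ is odd in case (b)). For the supplementary claim on column $(s+d-2)/2$ I would split on $s > d$ versus $s < d$: when $s > d$, the label $(s-d)/2$ at $(r', (s+d-2)/2)$ is positive and $(s-d)/2 + (s+d)/2 = s$ forbids the bead via $\overline{s}$-core (c); when $s < d$, $\overline{s}$-core (b) applied to $(s+d)/2 \in \la$ delivers $(d-s)/2 \in \la$, which is precisely the bead at row $-(d-1)/2$ needed to push $f$ down to $-(d+1)/2$, with the row below it a spacer by the same $(s+3d)/2 \notin \la$ argument. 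The main obstacle throughout is the careful bookkeeping of which of the three moduli's sum-condition (c) admits the central exception in each sub-case, since the parities of $s+d$ and $s+2d$ decide every step; a handful of very small cases (such as $s=d=1$) must be verified separately.
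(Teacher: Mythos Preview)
Your proof is correct and follows essentially the same route as the paper: locate the last column, read off the labels at and near the threshold row, and use Proposition~\ref{prop:bar} for each of the three moduli (together with Lemma~\ref{lem:beads}) to eliminate bead configurations outside the claimed range. The one step you leave to ``the same template'' in case~(c) that the paper spells out is the upper bound: $(3s+2d)/2\in\la$ forces $s/2,(s+2d)/2\in\la$ via Proposition~\ref{prop:bar}~(b), and their sum $s+d$ then contradicts Proposition~\ref{prop:bar}~(c) since $s+d$ is odd here.
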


\begin{proof}
Let position $(a,b)$ denote position $(-\lfloor d/2 \rfloor,\lfloor (s+d)/2 \rfloor)$.
\begin{enumerate}
    \item [(a)] Positions $(a-1,b),(a,b)$, and $(a+1,b)$ are labeled by $-s-3d/2, -d/2$, and
    $s+d/2$, respectively.
    First we show that $s+d/2$ and $s+3d/2$ are not parts of $\la$. If $s+d/2 \in \la$, then $d/2\in\la$ by Proposition \ref{prop:bar} (b). It gives a contradiction by Proposition \ref{prop:bar} (c) since $(s+d/2)+d/2=s+d$. One can similarly show that $s+3d/2 \notin \la$. Hence, the only possibility of having a bead in column $b$ is putting it on position $(a,b)$. Thus, $f(b)=a-1$ or $a$. 
    \item [(b)] Positions
    $(a-1,b),(a,b)$, and $(a+1,b)$ are labeled by $-(s+d)/2$, $(s+d)/2$, and $(3s+3d)/2$, respectively.
    We first claim that there is no bead on position $(a+1,b)$. If $(3s+3d)/2 \in \la$, then $(s+d)/2,(s+3d)/2 \in \la$ by Proposition \ref{prop:bar} (b), which contradicts to Proposition \ref{prop:bar} (c) since $(s+d)/2 + (s+3d)/2 = s+2d$. This completes a proof of the claim.
    Therefore, $f(b)=a$ when $(s+d)/2 \in \la$ and $f(b)=a-1$ otherwise. 
    
    Furthermore, we would like to show that $f(b-1)=a-1$ assuming that $f(b)=a$. Consider positions $(a-1,b-1)$ and $(a,b-1)$ which are labeled by $-(s+3d)/2$ and $(s-d)/2$, respectively. Position $(a-1,b-1)$ is a spacer by Proposition \ref{prop:bar} (c) since $(s+3d)/2+(s+d)/2=s+2d$.    
    When $s>d$, position $(a,b-1)$ is also a spacer by Proposition \ref{prop:bar} (c) since $(s-d)/2+(s+d)/2=s$. Otherwise, $(s-d)/2$ is negative and a bead is placed on position $(a,b-1)$ since $(d-s)/2=(s+d)/2-s$.
    In any case, we conclude that $f(b-1)=a-1$.
    \item [(c)] Positions $(a-2,b), (a-1,b), (a,b)$, and $(a+1,b)$ are labeled by $-(3s+4d)/2,-(s+2d)/2,s/2$ and $(3s+2d)/2$, respectively.
    If $(3s+2d)/2 \in \la$, then $s/2, (s+2d)/2\in\la$ by Proposition \ref{prop:bar} (b), which contradicts to Proposition \ref{prop:bar} (c). Thus, $(3s+2d)/2 \notin \la$ and $f(b)<a+1$. Similarly, $(3s+4d)/2 \notin \la$ which implies $f(b)\geq a-2$.
\end{enumerate}
\end{proof}

For coprime positive integers $s$ and $d$, it is obvious that the map from the set of $(\ols{s\phantom{d}}, \overline{s+d}, \overline{s+2d})$-core partitions to the set of functions satisfying the conditions in Propositions \ref{prop:f_initial} and \ref{prop:barf} is well-defined and injective. The following proposition shows that this map is surjective.

\begin{prop}\label{prop:barinv}
For coprime positive integers $s$ and $d$, let $f$ be a function that satisfies Propositions \ref{prop:f_initial} and \ref{prop:barf}. If $\la$ is a strict partition such that $f$ is the $(\overline{s+d},d)$-abacus function of $\la$, then $\la$ is an $(\ols{s\phantom{d}},\overline{s+d},\overline{s+2d})$-core partition.
\end{prop}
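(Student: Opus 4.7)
The plan is to decode $\la$ from $f$ using the $(\overline{s+d},d)$-abacus structure and then verify, via Proposition~\ref{prop:bar}, that $\la$ is an $\overline{m}$-core for each $m \in \{s, s+d, s+2d\}$. Since this is the converse direction of Propositions~\ref{prop:f_initial} and \ref{prop:barf}, the arguments parallel those proofs but in reverse.

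First, the conditions on $f$ determine the bead placement in each column $j$: if $f(j) \geq r(j)$, beads occupy the positive-label positions $(r(j), j), (r(j)+1, j), \dots, (f(j), j)$; if $f(j) < r(j)-1$, beads occupy the negative-label positions $(f(j)+1, j), \dots, (r(j)-1, j)$; and if $f(j) = r(j)-1$, column $j$ contains no beads. The parts of $\la$ are then read off as the positive labels at bead positions and the absolute values of the negative labels at bead positions.

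Next, we verify the $\overline{s+d}$-core condition. Conditions (a) and (b) of Proposition~\ref{prop:bar} are immediate: $s+d \notin \la$ follows from $f(0) = 0$, while (b) follows from the contiguity of beads within each column, since subtracting $s+d$ from a label corresponds to moving down one row in the same column. For (c), two parts whose sum is a multiple of $s+d$ must, by Proposition~\ref{prop:injection}, lie in the same column, one at a positive and one at a negative position; this can only happen in the rightmost column and only when $s+d$ is even, corresponding precisely to the allowed exception in Proposition~\ref{prop:bar}(c).

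The $\overline{s+2d}$- and $\overline{s}$-core conditions are checked analogously, exploiting the diagonal structure of the abacus: subtracting $s+2d$ corresponds to moving from $(i, j)$ to $(i-1, j-1)$, and subtracting $s$ corresponds to moving from $(i, j)$ to $(i-1, j+1)$. Condition (b) then follows from the step constraint $f(j-1) - f(j) \in \{-1, 0, 1\}$ in Proposition~\ref{prop:f_initial}(b), which guarantees the required target beads exist whenever the target positions have positive labels; condition (a), i.e.\ $s, s+2d \notin \la$, is handled by the initial conditions $f(0) = 0$ and $f(1) \in \{-1, 0\}$. The hard part is condition (c) for moduli $s$ and $s+2d$: pairs of parts summing to a multiple of $s$ or $s+2d$ need not lie in the same column, and the exceptional configurations depend on the parities of $s$ and $d$. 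The boundary constraints of Proposition~\ref{prop:barf} on $f(\lfloor(s+d-1)/2\rfloor)$, $f(\lfloor(s+d)/2\rfloor)$, and related values were tailored precisely to exclude the rightmost-column configurations whose labels would otherwise produce a forbidden sum; this is where the parity case analysis from Proposition~\ref{prop:barf} is consumed.
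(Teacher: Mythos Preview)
Your outline follows the same route as the paper, and your handling of conditions~(a) and~(b) of Proposition~\ref{prop:bar} for each modulus is correct. The gap is in your account of condition~(c) for the moduli $s$ and $s+2d$. You attribute this case to the boundary constraints of Proposition~\ref{prop:barf}, but those constraints are invoked only for the exceptional configuration in which one of the two parts sits in the rightmost column (which occurs only when $s$ and $d$ are both odd and one part equals $(s+d)/2$). The generic case is handled by the \emph{step constraint} of Proposition~\ref{prop:f_initial}(b), via exactly the diagonal mechanism you already used for~(b): if $h_1+h_2=s$ and the bead for $h_1$ lies at position $(i,j)$ with positive label $h_1$, then position $(i-1,j+1)$ carries the label $h_1-s=-h_2$, so the bead for $h_2$ sits there; this forces $f(j)\ge i$ while $f(j+1)\le i-2$, contradicting $|f(j)-f(j+1)|\le 1$. (When $h_1$'s bead is at a negative-label position the argument is symmetric, using column $j-1$.) Thus Proposition~\ref{prop:barf} is needed only when the required adjacent column $j\pm 1$ would fall off the abacus. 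The paper also first reduces from $h_1+h_2\equiv 0\pmod m$ to $h_1+h_2=m$ for $m\in\{s,s+2d\}$, a step you do not mention; this reduction rests on conditions~(a) and~(b), already established.
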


\begin{proof}
We show that $\la$ satisfies the conditions in Proposition \ref{prop:bar} (a), (b), and (c).
\begin{enumerate}
    \item [(a)] It follows from Proposition \ref{prop:f_initial} (a) that $s,s+d,s+2d \notin \la$.
    \item [(b)] Assume that $h$ is a part in $\la$. If $h > s+d$, then $h - (s+d) \in \la$ by Lemma \ref{lem:beads}.
    Consider the $(\overline{s+d},d)$-abacus diagram and suppose that $h > s$, but $h - s \notin \la$ to the contrary. Let position $(i,j)$ be labeled by $a$ such that $|a|=h$ which has a bead on. If $a>0$, then we get $j<\floor*{(s+d)/2}$ or $h=(s+d)/2$ with $s<d$ for odd numbers $s$ and $d$ by Proposition \ref{prop:barf}.
    First, assume that $j<\floor*{(s+d)/2}$. Then, position $(i-1,j+1)$ is a spacer labeled by $h -s$ which implies $f(j)\geq i$ and $f(j+1)<i-1$, so we get a contradiction to Proposition \ref{prop:f_initial} (b). Now, for odd numbers $s$ and $d$, let $h=(s+d)/2$ with $s<d$. Then, we have a bead on position $(-(d-1)/2,(s+d-2)/2)$ labeled by $(s-d)/2$ by Proposition \ref{prop:barf} (b), which gives a contradiction.
    If $a<0$, then position $(i+1,j-1)$ labeled by $-h +s$ is a spacer. This implies that $f(j-1) \geq i+1$ and $f(j) < i$, which contradicts to Proposition \ref{prop:f_initial} (b).
    By the similar argument, one can show that $h > s+2d$ implies $h - (s+2d) \in \la$.
    \item [(c)] By Lemma \ref{lem:beads} (c) and the construction of $f$, it is sufficient to show that there are no $h_1,h_2 \in \la$ such that $h_1 \neq h_2$ and $h_1 + h_2 \in \{s,s+2d\}$. Assume that there exist $h_1,h_2 \in \la$ satisfying $h_1 + h_2 =s$. If $h_1, h_2\neq (s+d)/2$, then there are positions $(i,j)$ and $(i-1,j+1)$ that are labeled by $h_1$ and $-h_2$, respectively. In this case, we get $f(j)\geq i$ and $f(j+1) < i-1$, which contradicts to Proposition \ref{prop:f_initial} (b). 
    If $h_2=(s+d)/2$ (so both $s$ and $d$ are odd), then positions $(i,(s+d-2)/2)$ and $(i,(s+d)/2)$ are labeled by $h_1$ and $h_2$, respectively, and we get a contradiction to Proposition \ref{prop:barf} (b). Similar argument works for the case when $h_1+h_2 \neq s+2d$. 
\end{enumerate}
\end{proof}

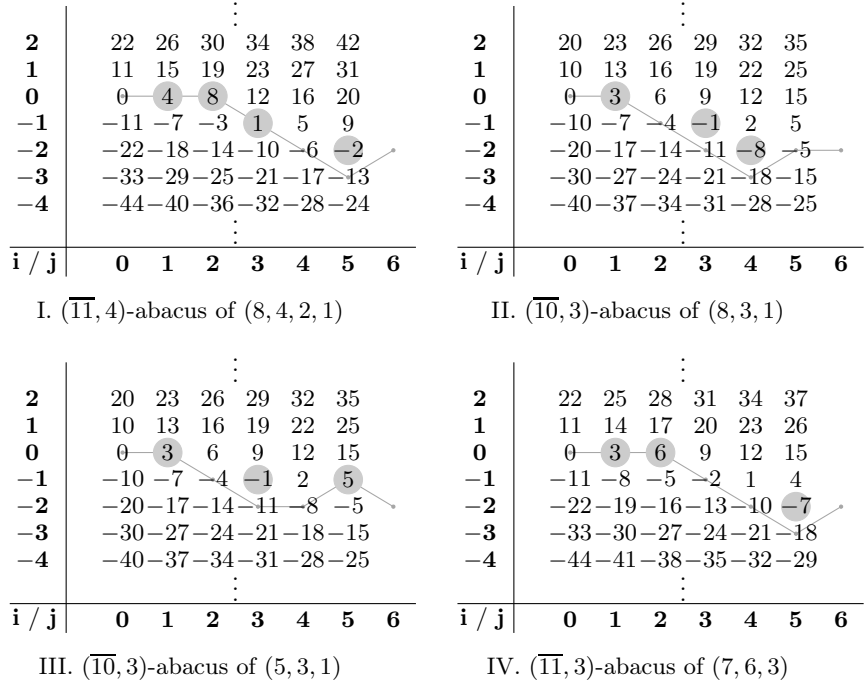
\begin{figure}[ht!]
\centering
\begin{tikzpicture}[scale=.3]
\small
\draw[color=gray!70] (0,0)--(2,0)--(4,0)--(6,-1.2)--(8,-2.4)--(10,-3.6)--(12,-2.4);

\filldraw[color=gray!70]
(0,0) circle (2pt)
(2,0) circle (2pt)
(4,0) circle (2pt)
(6,-1.2) circle (2pt)
(8,-2.4) circle (2pt)
(10,-3.6) circle (2pt)
(12,-2.4) circle (2pt)
;

\filldraw[color=gray!40] (2,0) circle (18pt);
\filldraw[color=gray!40] (4,0) circle (18pt);
\filldraw[color=gray!40] (6,-1.2) circle (18pt);
\filldraw[color=gray!40] (10,-2.4) circle (18pt);

\node at (-4,2.4) {$\mathbf{2}$};
\node at (-4,1.2) {$\mathbf{1}$};
\node at (-4,0) {$\mathbf{0}$};
\node at (-4,-1.2) {$\mathbf{-1}$};
\node at (-4,-2.4) {$\mathbf{-2}$};
\node at (-4,-3.6) {$\mathbf{-3}$};
\node at (-4,-4.8) {$\mathbf{-4}$};

\node at (-3.9,-7.4) {$\mathbf{i~/~j}$};

\node at (0,-7.4) {$\mathbf{0}$};
\node at (2,-7.4) {$\mathbf{1}$};
\node at (4,-7.4) {$\mathbf{2}$};
\node at (6,-7.4) {$\mathbf{3}$};
\node at (8,-7.4) {$\mathbf{4}$};
\node at (10,-7.4) {$\mathbf{5}$};
\node at (12,-7.4) {$\mathbf{6}$};

\foreach \i in {22,26,30,34,38,42}
\node at (\i/2-22/2,2.4) {$\i$};
\foreach \i in {11,15,19,23,27,31}
\node at (\i/2-11/2,1.2) {$\i$};
\foreach \i in {0,4,8,12,16,20}
\node at (\i/2,0) {$\i$};
\foreach \i in {-11,-7,-3,1,5,9}
\node at (\i/2+11/2,-1.2) {$\i$};
\foreach \i in {-22,-18,-14,-10,-6,-2}
\node at (\i/2+22/2,-2.4) {$\i$};
\foreach \i in {-33,-29,-25,-21,-17,-13}
\node at (\i/2+33/2,-3.6) {$\i$};
\foreach \i in {-44,-40,-36,-32,-28,-24}
\node at (\i/2+44/2,-4.8) {$\i$};

\node at (5,4) {\vdots};
\node at (5,-5.7) {\vdots};

\draw (-2.5,4)--(-2.5,-8);

\draw (-5,-6.7)--(13,-6.7);

\node at (3,-9.5) {I. $(\overline{11},4)$-abacus of $(8,4,2,1)$};
\end{tikzpicture}
\quad
\begin{tikzpicture}[scale=.3]
\small
\draw[color=gray!70] (0,0)--(2,0)--(4,-1.2)--(6,-2.4)--(8,-3.6)--(10,-2.4)--(12,-2.4);

\filldraw[color=gray!70]
(0,0) circle (2pt)
(2,0) circle (2pt)
(4,-1.2) circle (2pt)
(6,-2.4) circle (2pt)
(8,-3.6) circle (2pt)
(10,-2.4) circle (2pt)
(12,-2.4) circle (2pt)
;

\filldraw[color=gray!40] (2,0) circle (18pt);
\filldraw[color=gray!40] (6,-1.2) circle (18pt);
\filldraw[color=gray!40] (8,-2.4) circle (18pt);

\node at (-4,2.4) {$\mathbf{2}$};
\node at (-4,1.2) {$\mathbf{1}$};
\node at (-4,0) {$\mathbf{0}$};
\node at (-4,-1.2) {$\mathbf{-1}$};
\node at (-4,-2.4) {$\mathbf{-2}$};
\node at (-4,-3.6) {$\mathbf{-3}$};
\node at (-4,-4.8) {$\mathbf{-4}$};

\node at (-3.9,-7.4) {$\mathbf{i~/~j}$};

\node at (0,-7.4) {$\mathbf{0}$};
\node at (2,-7.4) {$\mathbf{1}$};
\node at (4,-7.4) {$\mathbf{2}$};
\node at (6,-7.4) {$\mathbf{3}$};
\node at (8,-7.4) {$\mathbf{4}$};
\node at (10,-7.4) {$\mathbf{5}$};
\node at (12,-7.4) {$\mathbf{6}$};

\foreach \i in {20,23,26,29,32,35}
\node at (\i/1.5-20/1.5,2.4) {$\i$};
\foreach \i in {10,13,16,19,22,25}
\node at (\i/1.5-10/1.5,1.2) {$\i$};
\foreach \i in {0,3,6,9,12,15}
\node at (\i/1.5,0) {$\i$};
\foreach \i in {-10,-7,-4,-1,2,5}
\node at (\i/1.5+10/1.5,-1.2) {$\i$};
\foreach \i in {-20,-17,-14,-11,-8,-5}
\node at (\i/1.5+20/1.5,-2.4) {$\i$};
\foreach \i in {-30,-27,-24,-21,-18,-15}
\node at (\i/1.5+30/1.5,-3.6) {$\i$};
\foreach \i in {-40,-37,-34,-31,-28,-25}
\node at (\i/1.5+40/1.5,-4.8) {$\i$};

\node at (5,4) {\vdots};
\node at (5,-5.7) {\vdots};

\draw (-2.5,4)--(-2.5,-8);

\draw (-5,-6.7)--(13,-6.7);

\node at (3,-9.5) {II. $(\overline{10},3)$-abacus of $(8,3,1)$};
\end{tikzpicture}\\

\begin{tikzpicture}[scale=.3]
\small
\draw[color=gray!70] (0,0)--(2,0)--(4,-1.2)--(6,-2.4)--(8,-2.4)--(10,-1.2)--(12,-2.4);

\filldraw[color=gray!70]
(0,0) circle (2pt)
(2,0) circle (2pt)
(4,-1.2) circle (2pt)
(6,-2.4) circle (2pt)
(8,-2.4) circle (2pt)
(10,-1.2) circle (2pt)
(12,-2.4) circle (2pt)
;

\filldraw[color=gray!40] (2,0) circle (18pt);
\filldraw[color=gray!40] (6,-1.2) circle (18pt);
\filldraw[color=gray!40] (10,-1.2) circle (18pt);

\node at (-4,2.4) {$\mathbf{2}$};
\node at (-4,1.2) {$\mathbf{1}$};
\node at (-4,0) {$\mathbf{0}$};
\node at (-4,-1.2) {$\mathbf{-1}$};
\node at (-4,-2.4) {$\mathbf{-2}$};
\node at (-4,-3.6) {$\mathbf{-3}$};
\node at (-4,-4.8) {$\mathbf{-4}$};

\node at (-3.9,-7.4) {$\mathbf{i~/~j}$};

\node at (0,-7.4) {$\mathbf{0}$};
\node at (2,-7.4) {$\mathbf{1}$};
\node at (4,-7.4) {$\mathbf{2}$};
\node at (6,-7.4) {$\mathbf{3}$};
\node at (8,-7.4) {$\mathbf{4}$};
\node at (10,-7.4) {$\mathbf{5}$};
\node at (12,-7.4) {$\mathbf{6}$};

\foreach \i in {20,23,26,29,32,35}
\node at (\i/1.5-20/1.5,2.4) {$\i$};
\foreach \i in {10,13,16,19,22,25}
\node at (\i/1.5-10/1.5,1.2) {$\i$};
\foreach \i in {0,3,6,9,12,15}
\node at (\i/1.5,0) {$\i$};
\foreach \i in {-10,-7,-4,-1,2,5}
\node at (\i/1.5+10/1.5,-1.2) {$\i$};
\foreach \i in {-20,-17,-14,-11,-8,-5}
\node at (\i/1.5+20/1.5,-2.4) {$\i$};
\foreach \i in {-30,-27,-24,-21,-18,-15}
\node at (\i/1.5+30/1.5,-3.6) {$\i$};
\foreach \i in {-40,-37,-34,-31,-28,-25}
\node at (\i/1.5+40/1.5,-4.8) {$\i$};

\node at (5,4) {\vdots};
\node at (5,-5.7) {\vdots};

\draw (-2.5,4)--(-2.5,-8);

\draw (-5,-6.7)--(13,-6.7);

\node at (3,-9.5) {III. $(\overline{10},3)$-abacus of $(5,3,1)$};
\end{tikzpicture}
\quad
\begin{tikzpicture}[scale=.3]
\small
\draw[color=gray!70] (0,0)--(2,0)--(4,0)--(6,-1.2)--(8,-2.4)--(10,-3.6)--(12,-2.4);

\filldraw[color=gray!70]
(0,0) circle (2pt)
(2,0) circle (2pt)
(4,0) circle (2pt)
(6,-1.2) circle (2pt)
(8,-2.4) circle (2pt)
(10,-3.6) circle (2pt)
(12,-2.4) circle (2pt)
;

\filldraw[color=gray!40] (2,0) circle (18pt);
\filldraw[color=gray!40] (4,0) circle (18pt);
\filldraw[color=gray!40] (10,-2.4) circle (18pt);

\node at (-4,2.4) {$\mathbf{2}$};
\node at (-4,1.2) {$\mathbf{1}$};
\node at (-4,0) {$\mathbf{0}$};
\node at (-4,-1.2) {$\mathbf{-1}$};
\node at (-4,-2.4) {$\mathbf{-2}$};
\node at (-4,-3.6) {$\mathbf{-3}$};
\node at (-4,-4.8) {$\mathbf{-4}$};

\node at (-3.9,-7.4) {$\mathbf{i~/~j}$};

\node at (0,-7.4) {$\mathbf{0}$};
\node at (2,-7.4) {$\mathbf{1}$};
\node at (4,-7.4) {$\mathbf{2}$};
\node at (6,-7.4) {$\mathbf{3}$};
\node at (8,-7.4) {$\mathbf{4}$};
\node at (10,-7.4) {$\mathbf{5}$};
\node at (12,-7.4) {$\mathbf{6}$};

\foreach \i in {22,25,28,31,34,37}
\node at (\i/1.5-22/1.5,2.4) {$\i$};
\foreach \i in {11,14,17,20,23,26}
\node at (\i/1.5-11/1.5,1.2) {$\i$};
\foreach \i in {0,3,6,9,12,15}
\node at (\i/1.5,0) {$\i$};
\foreach \i in {-11,-8,-5,-2,1,4}
\node at (\i/1.5+11/1.5,-1.2) {$\i$};
\foreach \i in {-22,-19,-16,-13,-10,-7}
\node at (\i/1.5+22/1.5,-2.4) {$\i$};
\foreach \i in {-33,-30,-27,-24,-21,-18}
\node at (\i/1.5+33/1.5,-3.6) {$\i$};
\foreach \i in {-44,-41,-38,-35,-32,-29}
\node at (\i/1.5+44/1.5,-4.8) {$\i$};

\node at (5,4) {\vdots};
\node at (5,-5.7) {\vdots};

\draw (-2.5,4)--(-2.5,-8);

\draw (-5,-6.7)--(13,-6.7);

\node at (3,-9.5) {IV. $(\overline{11},3)$-abacus of $(7,6,3)$};
\end{tikzpicture}
\caption{The $(\overline{s+d},d)$-abaci of several partitions and the corresponding free Motzkin paths}\label{fig:abacus_bar}
\end{figure}

For given coprime integers $s$ and $d$, let $\la$ be an $(\ols{s\phantom{d}}, \overline{s+d}, \overline{s+2d})$-core partition. For the $(\overline{s+d},d)$-abacus function $f$ of $\la$, we set $f(\floor*{(s+d+2)/2})\coloneqq -\floor*{(d+1)/2}$ and 
define $\phi(\la)$ to be the path $P=P_1P_2 \cdots P_{\floor*{(s+d+2)/2}}$, where the $j$th step is given by $P_j=(1,f(j)-f(j-1))$ for each $j$. By Proposition \ref{prop:f_initial} (b), $P_j$ is one of the three steps $U=(1,1)$, $F=(1,0)$, and $D=(1,-1)$, so $P$ is a free Motzkin path. From this construction together with Proposition~\ref{prop:barf}, we obtain a path interpretation of an $(\ols{s\phantom{d}}, \overline{s+d}, \overline{s+2d})$-core partition as described in the following theorem.

\begin{thm}\label{thm:barcore}
For coprime positive integers $s$ and $d$, there is a bijection between the sets $\mathcal{BC}_{(s,s+d,s+2d)}$ and 
\begin{enumerate}
\item[(a)] 
$\mathcal{F}(\frac{s+d+1}{2},-\frac{d}{2} \,;\, \{U\},\{D\})$ if $s$ is odd and $d$ is even;
\item[(b)] $\mathcal{F}(\frac{s+d+2}{2},-\frac{d+1}{2} \,;\, \{U\},\{FD,DD,U\})$ if both $s$ and $d$ are odd;
\item[(c)] $\mathcal{F}(\frac{s+d+1}{2},-\frac{d+1}{2} \,;\,
\{U\},\emptyset)$ if $s$ is even and $d$ is odd.
\end{enumerate}
\end{thm}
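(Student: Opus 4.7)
The plan is to verify that the map $\phi$ constructed just before the theorem, sending $\la \mapsto P$ with $P_j = (1, f(j) - f(j-1))$, is a bijection onto the indicated set of free Motzkin paths in each of the three parity cases. First, the ambient endpoint is automatic: the path has $\lfloor (s+d+2)/2 \rfloor$ steps, and its total vertical displacement is $f(\lfloor (s+d+2)/2 \rfloor) - f(0) = -\lfloor (d+1)/2 \rfloor$ by Proposition~\ref{prop:f_initial}(a) together with the chosen extension of $f$, while Proposition~\ref{prop:f_initial}(b) guarantees that each step is $U$, $F$, or $D$.

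Next I would check the forbidden step patterns. The start restriction $P_1 \neq U$ is immediate from $f(1) \in \{0,-1\}$ in Proposition~\ref{prop:f_initial}(a), which forces $P_1 \in \{F,D\}$. The end restrictions require a parity-by-parity reading of Proposition~\ref{prop:barf}. In case (a), the two allowed values of $f(\tfrac{s+d-1}{2})$ combined with the appended $f(\tfrac{s+d+1}{2}) = -\tfrac{d}{2}$ make the last step $F$ or $U$, never $D$. In case (c), the three allowed values of $f(\tfrac{s+d-1}{2})$ yield last step $U$, $F$, or $D$, so no ending restriction is needed. The delicate case is (b): $f(\tfrac{s+d}{2}) \in \{-\tfrac{d+1}{2},-\tfrac{d-1}{2}\}$ and the appended value $-\tfrac{d+1}{2}$ force the last step to be $F$ or $D$ (ruling out $U$); when it is $D$ we must have $f(\tfrac{s+d}{2}) = -\tfrac{d-1}{2}$, which by Proposition~\ref{prop:barf}(b) forces $f(\tfrac{s+d-2}{2}) = -\tfrac{d+1}{2}$, so the preceding step is $U$. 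Thus a $D$ ending is always preceded by $U$, simultaneously ruling out the endings $FD$ and $DD$.

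For surjectivity, I would invert the construction: given a path $P$ in the target set, define $f(0) = 0$ and $f(j) = f(j-1) + v_j$, where $v_j \in \{1,0,-1\}$ is the vertical component of $P_j$. The start restriction $P_1 \neq U$ gives $f(1) \in \{0,-1\}$; the prescribed endpoint fixes $f(\lfloor (s+d+2)/2 \rfloor) = -\lfloor (d+1)/2 \rfloor$; and the ending restrictions translate, case by case, into exactly the constraints of Proposition~\ref{prop:barf} on $f(\lfloor (s+d)/2 \rfloor)$ (and, in case (b), on $f(\tfrac{s+d-2}{2})$ as well). Proposition~\ref{prop:barinv} then produces an $(\ols{s\phantom{d}}, \overline{s+d}, \overline{s+2d})$-core partition whose $(\overline{s+d},d)$-abacus function is $f$, and this partition is the $\phi$-preimage of $P$. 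The main obstacle is the bookkeeping in case (b), where one must show that the compound endpoint condition ``$f(\tfrac{s+d-2}{2}) = -\tfrac{d+1}{2}$ whenever $f(\tfrac{s+d}{2}) = -\tfrac{d-1}{2}$'' matches precisely the combined forbidden endings $\{FD, DD\}$; the other two cases are a direct translation once the abacus-function encoding of Propositions~\ref{prop:f_initial} and~\ref{prop:barf} is in hand.
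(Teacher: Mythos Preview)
Your proposal is correct and follows essentially the same route as the paper: both arguments use Propositions~\ref{prop:f_initial} and~\ref{prop:barf} to constrain the shape of the path (start, endpoint, and terminal steps) and then invoke Proposition~\ref{prop:barinv} for the inverse direction. The only cosmetic difference is that the paper first records the bijection to a disjoint union of shorter path sets (one for each allowed terminal value of $f$) and then appends the final step to merge them into a single set, whereas you work directly with the extended path from the outset.
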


\begin{proof} 
All the bijections come from Propositions \ref{prop:f_initial}, \ref{prop:barf}, and \ref{prop:barinv}. By drawing line segments that connects the positions $(f(j),j)$ and $(f(j+1),j+1)$ to obtain $P=P_1P_2 \cdots P_{\floor*{(s+d)/2}}$ in the $(\overline{s+d},d)$-abacus, we have the one-to-one correspondences between the sets $\mathcal{BC}_{(s,s+d,s+2d)}$ and

{
\small
\begin{align*}
\text{(a) }& \mathcal{F}\left(\frac{s+d-1}{2},-\frac{d}{2}\,;\, \{U\},\emptyset\right)\cup\mathcal{F}\left(\frac{s+d-1}{2}, -\frac{d+2}{2}\,;\, \{U\},\emptyset\right);\\
\text{(b) }& \mathcal{F}\left(\frac{s+d}{2},-\frac{d+1}{2} \,;\, \{U\},\emptyset\right) \cup \mathcal{F}\left(\frac{s+d}{2},-\frac{d-1}{2} \,;\, \{U\},\{F,D\}\right);\\
\text{(c) }& \mathcal{F}\left(\frac{s+d-1}{2},-\frac{d-1}{2} \,;\, \{U\},\emptyset\right) \cup \mathcal{F}\left(\frac{s+d-1}{2},-\frac{d+1}{2} \,;\, \{U\},\emptyset\right) \\ &\hspace{55mm}\cup\mathcal{F}\left(\frac{s+d-1}{2},-\frac{d+3}{2} \,;\, \{U\},\emptyset\right).
\end{align*}
}

The addition of the last step gives free Motzkin paths of type $(\lfloor (s+d+2)/2\rfloor,-\lfloor (d+1)/2 \rfloor)$ as we desired.

\end{proof}

\begin{ex}
For a $(\overline{7}, \overline{11}, \overline{15})$-core partition $\la=(8,4,2,1)$, Diagram I in Figure \ref{fig:abacus_bar} illustrates the $(\overline{11},4)$-abacus of $\la$. The $(\overline{11},4)$-abacus function $f$ of $\la$ is given by
$$f(0)=0,~ f(1)=0,~ f(2)=0,~ f(3)=-1,~ f(4)=-2, ~f(5)=-3, ~f(6)=-2,$$
and its corresponding path is $P=\phi(\la)=FFDDDU$.
\end{ex}


\subsection{Doubled distinct $(s,s+d,s+2d)$-core partitions}

Recall that for an $\overline{s}$-core partition $\la$ with even $s$, $\la\la$ is a doubled distinct $s$-core if and only if $s/2 \notin \la$.

\begin{prop}\label{prop:dd_f}
For a strict partition $\la$ such that $\la\la$ is a doubled distinct $(s,s+d,s+2d)$-core, the $(\overline{s+d},d)$-abacus function $f$ of $\la$ satisfies the following.
\begin{enumerate}
    \item [(a)] If $s$ is odd and $d$ is even, then $f(\frac{s+d-1}{2})\in \{ -\frac{d+2}{2}, -\frac{d}{2}\}$.
    \item [(b)] If $s$ and $d$ are both odd, then $f(\frac{s+d}{2})=-\frac{d+1}{2}$.
    \item [(c)] If $s$ is even and $d$ is odd, then $f(\frac{s+d-1}{2})=-\frac{d+1}{2}$.
\end{enumerate}
\end{prop}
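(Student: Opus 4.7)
The plan is to combine the characterization of doubled distinct $s$-cores in Proposition \ref{prop:dd} with the list of possible values of $f$ for bar-core partitions provided by Proposition \ref{prop:barf}, and eliminate the values that would force one of $s/2$, $(s+d)/2$, or $(s+2d)/2$ to be a part of $\la$. Since $\gcd(s,d)=1$, the even elements among $\{s,\,s+d,\,s+2d\}$ are none in case (a), only $s+d$ in case (b), and exactly $s$ and $s+2d$ in case (c); so the additional doubled-distinct constraint from Proposition \ref{prop:dd} is vacuous in (a), asks $(s+d)/2\notin\la$ in (b), and asks both $s/2\notin\la$ and $(s+2d)/2\notin\la$ in (c). This makes case (a) immediate: with no constraint beyond being a bar-core, the possible values of $f(\tfrac{s+d-1}{2})$ are exactly those listed in Proposition \ref{prop:barf}(a).

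For case (b), I would revisit the coordinate calculation in the proof of Proposition \ref{prop:barf}(b): the position $(-\tfrac{d-1}{2},\,\tfrac{s+d}{2})$ carries the label $(s+d)/2$, and the two allowed values of $f(\tfrac{s+d}{2})$ are distinguished precisely by whether a bead sits there. Since $f(\tfrac{s+d}{2})=-\tfrac{d-1}{2}$ corresponds to $(s+d)/2\in\la$, this value is excluded by the doubled-distinct hypothesis, leaving $f(\tfrac{s+d}{2})=-\tfrac{d+1}{2}$ as the unique survivor.

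For case (c), the same idea applies to two positions in column $(s+d-1)/2$: from the proof of Proposition \ref{prop:barf}(c), the positions $(-\tfrac{d-1}{2},\,\tfrac{s+d-1}{2})$ and $(-\tfrac{d+1}{2},\,\tfrac{s+d-1}{2})$ carry the labels $s/2$ and $-(s+2d)/2$, respectively. The value $f(\tfrac{s+d-1}{2})=-\tfrac{d-1}{2}$ places a bead at the first position, forcing $s/2\in\la$, while $f(\tfrac{s+d-1}{2})=-\tfrac{d+3}{2}$ forces, via Lemma \ref{lem:beads}(b), a bead at the second position, hence $(s+2d)/2\in\la$. Both are prohibited, so only $f(\tfrac{s+d-1}{2})=-\tfrac{d+1}{2}$ survives.

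The main bookkeeping point will be correctly translating each allowed value of $f(j)$ back into a bead pattern, namely that $f(j)=r(j)-1$ says column $j$ is empty, $f(j)\geq r(j)$ encodes a block of positive-label beads topping out at row $f(j)$, and $f(j)<r(j)-1$ encodes a block of negative-label beads stretching from row $r(j)-1$ down to row $f(j)+1$. With this dictionary in hand, the forbidden half-values read off directly from the row-column coordinates already computed inside the proof of Proposition \ref{prop:barf}, and no new abacus argument is required.
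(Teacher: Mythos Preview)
Your proposal is correct and follows essentially the same approach as the paper: in each case you start from the range of values allowed by Proposition~\ref{prop:barf} and eliminate those that would put the relevant half-value into $\la$, exactly as the paper does. One small remark: in case~(c), the fact that $f(\tfrac{s+d-1}{2})=-\tfrac{d+3}{2}$ forces a bead at $(-\tfrac{d+1}{2},\tfrac{s+d-1}{2})$ follows already from the definition of $f$ (that row carries a non-positive label, so if it were a spacer then $f$ would be at least $-\tfrac{d+1}{2}$), so the appeal to Lemma~\ref{lem:beads}(b) is unnecessary, though not wrong.
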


\begin{proof}
\begin{enumerate}
    \item [(a)] It follows from Proposition \ref{prop:barf} (a) since we do not need to consider the additional property of a doubled distinct core partition. 
    \item [(b)] Positions $(-(d+1)/2,(s+d)/2)$ and $(-(d-1)/2,(s+d)/2)$  are labeled by $-(s+d)/2$ and  $(s+d)/2$, respectively. Since $(s+d)/2 \notin \la$ by Proposition \ref{prop:dd} (b), there is no bead in column $(s+d)/2$, and $f((s+d)/2)=-(d+1)/2$.
    \item [(c)] Positions $(-(d+1)/2,(s+d-1)/2)$ and $(-(d-1)/2,(s+d-1)/2)$ are labeled by $-(s+2d)/2$ and $s/2$, respectively. We know that $s/2,(s+2d)/2 \notin \la$ by Proposition \ref{prop:dd} (b), so $f((s+d-1)/2)=-(d+1)/2$.
\end{enumerate}
\end{proof}

Similar to the bar-core case considered in Section \ref{sec:bar}, there is a one-to-one correspondence between the set of doubled distinct $(s,s+d,s+2d)$-cores and the set of functions satisfying the conditions in Propositions \ref{prop:f_initial} and \ref{prop:dd_f}. The following proposition completes the existence of the bijection.

\begin{prop}\label{prop:dd_inverse}
For coprime positive integers $s$ and $d$, let $f$ be a function that satisfies Propositions \ref{prop:f_initial} and \ref{prop:dd_f}. If $\la$ is a strict partition such that $f$ is the $(\overline{s+d},d)$-abacus function of $\la$, then $\la\la$ is a doubled distinct $(s,s+d,s+2d)$-core.
\end{prop}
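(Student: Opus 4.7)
The plan is to deduce this proposition from Proposition \ref{prop:barinv} together with a direct analysis of the refined value of $f$ at the last column. The key observation is that the constraints on $f(\lfloor (s+d)/2\rfloor)$ imposed by Proposition \ref{prop:dd_f} are strictly stronger than those imposed by Proposition \ref{prop:barf}: in cases (a), (b), (c) respectively, the value set in Proposition \ref{prop:dd_f} is contained in the one in Proposition \ref{prop:barf}, and in cases (b) and (c) it is singleton. Hence the hypotheses of Proposition \ref{prop:barinv} are satisfied, so $\la$ is automatically an $(\ols{s\phantom{d}},\overline{s+d},\overline{s+2d})$-core.

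By Proposition \ref{prop:dd}, to upgrade ``$\overline{m}$-core" to ``doubled distinct $m$-core", it remains to show, for each even $m\in\{s,s+d,s+2d\}$, that $m/2\notin\la$. The case split is by parity. In case (a) all three of $s,\,s+d,\,s+2d$ are odd, so nothing further is required; Remark \ref{rmk:oddoddodd} makes this immediate. In case (b) only $s+d$ is even, so I must show $(s+d)/2\notin\la$. In case (c) both $s$ and $s+2d$ are even, so I must show $s/2\notin\la$ and $(s+2d)/2\notin\la$.

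The core computation is to locate the positions labeled by these half-values on the $(\overline{s+d},d)$-abacus. A direct calculation shows: in column $(s+d)/2$ of the abacus (case (b)), the positive label $(s+d)/2$ sits at row $-(d-1)/2$ and the negative label $-(s+d)/2$ at row $-(d+1)/2$; in column $(s+d-1)/2$ (case (c)), the label $s/2$ sits at row $-(d-1)/2$ and the label $-(s+2d)/2$ at row $-(d+1)/2$. Now argue from the definition of $f$ in two cases. If there were a bead with positive label in the column, then $f$ would be the largest row with a bead; but the positive labels only appear at rows $\ge -(d-1)/2$, so such a $f$-value would be $\ge -(d-1)/2>-(d+1)/2$, contradicting the hypothesis. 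Hence we are in the ``otherwise" branch of the definition of $f$: no bead has positive label (so $(s+d)/2\notin\la$ in case (b), and $s/2\notin\la$ in case (c)), and position $(-(d+1)/2,\lfloor (s+d)/2\rfloor)$ must be a spacer with non-positive label (so $(s+2d)/2\notin\la$ in case (c)).

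The main obstacle is the bookkeeping around the ``otherwise" branch: one must carefully verify that ``$f(j)=-(d+1)/2$" forces the position at that row to be a spacer, rather than a bead with negative label, and rule out the presence of a bead with positive label higher in the column. Once these two observations are made explicit, the proof reduces to the label arithmetic above and an appeal to Proposition \ref{prop:barinv}; no further case analysis on the bar-core conditions of Proposition \ref{prop:bar} is needed.
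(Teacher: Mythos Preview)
Your proof is correct and follows essentially the same approach as the paper's: first reduce to Proposition~\ref{prop:barinv} by observing that the constraints in Proposition~\ref{prop:dd_f} refine those in Proposition~\ref{prop:barf}, then verify the extra condition $m/2\notin\la$ for each even $m\in\{s,s+d,s+2d\}$ via the label arithmetic in the last column of the abacus. The paper's version is terser (it simply asserts ``there is no bead in column $(s+d)/2$'' in case~(b) and similarly in case~(c)), whereas you make explicit the two branches in the definition of $f$ and why the singleton value forces the relevant positions to be spacers; this extra care is warranted and the argument is sound.
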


\begin{proof}
It is sufficient to show that $\la$ satisfies Proposition \ref{prop:dd} (b). We consider the case according to the parity of $s$ and $d$.
For odd $s$ and even $d$, all of $s,s+d,s+2d$ are odd, so we no longer need to consider the additional property of 
$\la\la$.
For odds $s$ and $d$, there is no bead in column $(s+d)/2$ by Proposition \ref{prop:dd_f} (b). Since the only column that has labels whose absolute values are $(s+d)/2$ is the column $(s+d)/2$,
it follows that $(s+d)/2 \notin \la$.
If $s$ is even and $d$ is odd, then $s$ and $s+2d$ are even. In a similar way, $s/2,(s+2d)/2 \notin \la$ by Proposition \ref{prop:dd_f} (c).
\end{proof}

Now we give a path interpretation for the doubled distinct $(s,s+d,s+2d)$-cores.

\begin{thm}\label{thm:dd3}
For coprime positive integers $s$ and $d$, there is a bijection between the sets $\mathcal{DD}_{(s,s+d,s+2d)}$ and 
\begin{enumerate}
\item[(a)] $\mathcal{F}(\frac{s+d+1}{2},-\frac{d}{2} \,;\, \{U\},\{D\})$ if $s$ is odd and $d$ is even;
\item[(b)] $\mathcal{F}(\frac{s+d}{2},-\frac{d+1}{2} \,;\, \{U\},\emptyset)$ if both $s$ and $d$ are odd;
\item[(c)] $\mathcal{F}(\frac{s+d-1}{2},-\frac{d+1}{2} \,;\, \{U\},\emptyset)$
 if $s$ is even and $d$ is odd.
\end{enumerate}
\end{thm}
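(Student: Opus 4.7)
The plan is to mirror the argument used for Theorem~\ref{thm:barcore}, substituting Proposition~\ref{prop:dd_f} for Proposition~\ref{prop:barf} and Proposition~\ref{prop:dd_inverse} for Proposition~\ref{prop:barinv}. Given a strict partition $\la$ whose double $\la\la$ is a doubled distinct $(s,s+d,s+2d)$-core, I form its $(\overline{s+d},d)$-abacus function $f$ and encode it as a lattice path whose $j$th step is $P_j=(1,\,f(j)-f(j-1))$ for $j=1,\dots,\lfloor(s+d)/2\rfloor$. Proposition~\ref{prop:f_initial}(b) guarantees $P_j\in\{U,F,D\}$, so that $P$ is a free Motzkin path, and Proposition~\ref{prop:f_initial}(a), which gives $f(0)=0$ and $f(1)\in\{0,-1\}$, forces $P_1\neq U$; this is the ``no $U$ at the start'' condition appearing in all three cases.

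The three cases differ only in how the endpoint is handled. In case~(a), $s$ odd and $d$ even, $\lfloor(s+d)/2\rfloor=(s+d-1)/2$ and Proposition~\ref{prop:dd_f}(a) gives $f((s+d-1)/2)\in\{-d/2,\,-(d+2)/2\}$; I append a final step that lands at height $-d/2$, namely an $F$ if $f((s+d-1)/2)=-d/2$ and a $U$ if $f((s+d-1)/2)=-(d+2)/2$. The resulting path has type $(\tfrac{s+d+1}{2},-\tfrac{d}{2})$ and by construction never ends with $D$. In case~(b), $s$ and $d$ both odd, $\lfloor(s+d)/2\rfloor=(s+d)/2$ and Proposition~\ref{prop:dd_f}(b) pins down $f((s+d)/2)=-(d+1)/2$ exactly, so $P$ already has type $(\tfrac{s+d}{2},-\tfrac{d+1}{2})$ without requiring an extra step, and no restriction on the final step arises. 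Case~(c) is analogous: $\lfloor(s+d)/2\rfloor=(s+d-1)/2$ and Proposition~\ref{prop:dd_f}(c) forces $f((s+d-1)/2)=-(d+1)/2$, again with no final-step restriction.

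For the reverse direction, starting from a path $P$ in the prescribed set, I would define $f(j)$ as the partial sum of the vertical components of $P_1,\dots,P_j$ (truncating the appended step in case~(a)), verify that $f$ satisfies Propositions~\ref{prop:f_initial} and~\ref{prop:dd_f} by reading off the step restrictions, and then invoke Proposition~\ref{prop:dd_inverse} to obtain a strict $\la$ for which $\la\la$ is a doubled distinct $(s,s+d,s+2d)$-core. The two constructions are mutually inverse by design, giving the desired bijections. The main potential obstacle is purely bookkeeping: in case~(a) one must check that the dichotomy of endpoints in Proposition~\ref{prop:dd_f}(a) corresponds exactly to the ``no $D$ at the end'' condition after the extra step is appended, while in cases~(b) and~(c) one must confirm that the exact endpoint provided by Proposition~\ref{prop:dd_f} really does leave the last step unconstrained. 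These matches are forced once the parities of $\lfloor(s+d)/2\rfloor$ and $\lfloor(d+1)/2\rfloor$ are tracked carefully in each case.
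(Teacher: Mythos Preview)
Your proposal is correct and follows essentially the same approach as the paper's own proof: part~(a) reduces to Theorem~\ref{thm:barcore}(a) because Proposition~\ref{prop:dd_f}(a) coincides with Proposition~\ref{prop:barf}(a), while in parts~(b) and~(c) the exact endpoint supplied by Proposition~\ref{prop:dd_f} makes the extra step unnecessary, just as you describe. The paper states this more tersely but the underlying construction and the appeal to Propositions~\ref{prop:f_initial}, \ref{prop:dd_f}, and~\ref{prop:dd_inverse} are identical.
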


\begin{proof}
Part (a) comes from Theorem \ref{thm:barcore} (a). Parts (b) and (c) are followed by Propositions \ref{prop:f_initial} and \ref{prop:dd_f}. Note that the length of the corresponding paths in parts (b) and (c) are different than the original setting. Since parts (b) and (c) in Proposition \ref{prop:dd_f} give only one option for the value of $f$ at the second last step, we no longer need to extend the corresponding path to the end point.

\end{proof}

\subsection{$(s,s+d,s+2d)$-CSYDs}

We recall that for even $s$, $\la$ is an $s$-CSYD if and only if $\la$ is an $\overline{s}$-core and $3s/2 \notin \la$.
\begin{prop}\label{prop:csyd_f}
For a strict partition $\la$ such that $S(\la)$ is an $(s,s+d,s+2d)$-CSYD, the $(\overline{s+d},d)$-abacus function $f$ of $\la$ satisfies the following.
\begin{enumerate}
    \item [(a)] If $s$ is odd and $d$ is even, then $f(\frac{s+d-1}{2})\in\{-\frac{d+2}{2},-\frac{d}{2}\}$.
    \item [(b)] If $s$ and $d$ are both odd, then $f(\frac{s+d}{2}) \in  \{-\frac{d+1}{2},-\frac{d-1}{2}\}$. In addition, $f(\frac{s+d-2}{2})=-\frac{d+1}{2}$ when $f(\frac{s+d}{2})=-\frac{d-1}{2}$.
    \item [(c)] If $s$ is even and $d$ is odd, then $f(\frac{s+d-1}{2}), f(\frac{s+d-3}{2}) \in \{ -\frac{d+3}{2}, -\frac{d+1}{2},  -\frac{d-1}{2}\}$. 
    
\end{enumerate}
\end{prop}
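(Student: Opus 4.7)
The plan is to leverage the CSYD characterization from Proposition \ref{prop:CSYD}---that $S(\la)$ is an $s$-CSYD if and only if $\la$ is an $\overline{s}$-core and, when $s$ is even, $3s/2 \notin \la$---and to combine it with Proposition \ref{prop:barf}. Being an $(s,s+d,s+2d)$-CSYD adds to the bar-core hypothesis only the extra constraint $3m/2 \notin \la$ for each $m \in \{s, s+d, s+2d\}$ that happens to be even. So I would apply Proposition \ref{prop:barf} first to get the baseline conditions on $f$, then translate any new forbidden parts into extra conditions on $f$.

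For part (a) with $s$ odd and $d$ even, all of $s$, $s+d$, $s+2d$ are odd, so no new condition arises and the claim is verbatim Proposition \ref{prop:barf}(a). For part (b) with both $s$ and $d$ odd, only $s+d$ is even, but the proof of Proposition \ref{prop:barf}(b) already establishes $3(s+d)/2 \notin \la$ as a consequence of $\overline{s+2d}$-coreness (if $3(s+d)/2 \in \la$, then $(s+d)/2, (s+3d)/2 \in \la$ by Proposition \ref{prop:bar}(b), but their sum $s+2d$ contradicts Proposition \ref{prop:bar}(c)). So the CSYD condition contributes nothing new and the statement coincides with Proposition \ref{prop:barf}(b).

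The substantive case is part (c), with $s$ even and $d$ odd. The condition on $f(\tfrac{s+d-1}{2})$ is inherited from Proposition \ref{prop:barf}(c), so I only need to derive the new bound on $f(\tfrac{s+d-3}{2})$ from the two forbidden elements $3s/2$ and $3(s+2d)/2$. A direct computation shows that in the $(\overline{s+d},d)$-abacus, $3s/2$ labels position $(\tfrac{3-d}{2}, \tfrac{s+d-3}{2})$ and $-3(s+2d)/2$ labels position $(-\tfrac{d+3}{2}, \tfrac{s+d-3}{2})$; in particular, both beads sit in column $\tfrac{s+d-3}{2}$. Unpacking the definition of $f$ together with Lemma \ref{lem:beads}: a bead at $(\tfrac{3-d}{2}, \tfrac{s+d-3}{2})$ is equivalent to $f(\tfrac{s+d-3}{2}) \geq \tfrac{3-d}{2}$, so $3s/2 \notin \la$ forces $f(\tfrac{s+d-3}{2}) \leq -\tfrac{d-1}{2}$; dually, a bead at $(-\tfrac{d+3}{2}, \tfrac{s+d-3}{2})$, via Lemma \ref{lem:beads}(b), appears exactly when every row strictly between $-\tfrac{d+3}{2}$ and $r(\tfrac{s+d-3}{2})$ is also beaded, which by the definition of $f$ means $f(\tfrac{s+d-3}{2}) \leq -\tfrac{d+5}{2}$. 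Thus $3(s+2d)/2 \notin \la$ gives $f(\tfrac{s+d-3}{2}) \geq -\tfrac{d+3}{2}$, and combining the two bounds yields the desired three-element set.

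The main obstacle I expect is the bookkeeping inside part (c): one must handle the two sub-cases $s > 2d$ and $s < 2d$ separately, since $r(\tfrac{s+d-3}{2}) = -\tfrac{d-1}{2}$ in the first and $r(\tfrac{s+d-3}{2}) = \tfrac{3-d}{2}$ in the second, and the switch in the definition of $f$ between ``largest bead with positive label'' and ``largest spacer with non-positive label'' must be tracked at the boundary rows. However, after this case split the same pair of inequalities above is produced in both sub-cases, and the edge case $s = 2, d = 1$ reduces to the automatic equality $f(0) = 0$ from Proposition \ref{prop:f_initial}(a).
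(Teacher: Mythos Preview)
Your proposal is correct and follows essentially the same route as the paper: parts (a) and (b) are reduced to Proposition~\ref{prop:barf} after observing that the CSYD condition adds nothing new there, and for part (c) you locate the two forbidden parts $3s/2$ and $3(s+2d)/2$ in column $\tfrac{s+d-3}{2}$ of the $(\overline{s+d},d)$-abacus and translate their absence into the bounds $-\tfrac{d+3}{2}\le f(\tfrac{s+d-3}{2})\le -\tfrac{d-1}{2}$. The paper does exactly this, only phrasing it as ``at most one bead remains in the four positions $(a,b),\dots,(a+3,b)$'' rather than via the pair of inequalities you write; your extra discussion of the split $s\gtrless 2d$ for $r(\tfrac{s+d-3}{2})$ and of the edge case $s=2,d=1$ is more careful than the paper's own proof but not strictly needed.
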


\begin{proof}
\begin{enumerate}
    \item [(a)] It also follows from Proposition \ref{prop:barf} (a) since we do not need to consider the additional property of an $S(\la)$.
    \item [(b)] From the proof of Proposition \ref{prop:barf} (b), we have $(3s+3d)/2\notin \la$ for an $(\ols{s\phantom{d}},\overline{s+d},\overline{s+2d})$-core partition $\la$. Therefore, $\la$ is an $(\ols{s\phantom{d}},\overline{s+d},\overline{s+2d})$-core partition if and only if $S(\la)$ is an $(s,s+d,s+2d)$-CSYD for odd numbers $s$ and $d$.
    \item [(c)] Let $(a,b)=(-(d+3)/2,(s+d-3)/2)$. By the proof of Proposition \ref{prop:barf} (c) we have $f(b+1)=a,a+1$, or $a+2$. Note that positions $(a,b)$, $(a+1,b)$,
    $(a+2,b)$, and $(a+3,b)$ are labeled by $-(3s+6d)/2$, $-(s+4d)/2$, $(s-2d)/2$, and $3s/2$ respectively. 
    Since $3s/2,(3s+6d)/2 \notin \la$ by Proposition \ref{prop:CSYD} (b), there is at most one bead labeled by $(s-2d)/2$ or $-(s+4d)/2$ in column $b$. Hence, $f(b)=a,a+1$, or $a+2$.
\end{enumerate}
\end{proof}

Again, we construct a bijection between the set of $(s,s+d,s+2d)$-CSYDs and the set of functions satisfying the conditions in Propositions \ref{prop:f_initial} and \ref{prop:csyd_f}.

\begin{prop}
For coprime positive integers $s$ and $d$, let $f$ be a function that satisfies Propositions \ref{prop:f_initial} and \ref{prop:csyd_f}. If $\la$ is a strict partition such that $f$ is the $(\overline{s+d},d)$-abacus function of $\la$, then $S(\la)$ is an $(s,s+d,s+2d)$-CSYD.
\end{prop}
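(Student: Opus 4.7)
My plan is to mirror the proof of Proposition \ref{prop:dd_inverse}. First I would observe that the conditions on $f$ stated in Proposition \ref{prop:csyd_f} contain the conditions of Proposition \ref{prop:barf}: they agree verbatim in cases (a) and (b), while in case (c) the CSYD version only adds the further requirement on $f((s+d-3)/2)$. Together with Proposition \ref{prop:f_initial}, Proposition \ref{prop:barinv} then yields that $\la$ is an $(\ols{s\phantom{d}},\overline{s+d},\overline{s+2d})$-core. By Proposition \ref{prop:CSYD}, what remains is to show that $3m/2\notin\la$ for each even $m\in\{s,s+d,s+2d\}$, and I would carry this out by a parity case analysis paralleling Proposition \ref{prop:csyd_f}.

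Case (a) is automatic, since all three moduli are odd. In case (b) only $s+d$ is even; a direct computation of $(s+d)i+dj=3(s+d)/2$ shows that $3(s+d)/2$ labels the position $(-(d-3)/2,(s+d)/2)$ in the positive-label region of the abacus. The bound $f((s+d)/2)\le-(d-1)/2$ from Proposition \ref{prop:csyd_f}(b) places the topmost positive-label bead in that column strictly below this position, so no bead sits there and $3(s+d)/2\notin\la$.

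Case (c) is where both $3s/2$ and $3(s+2d)/2$ must be excluded, and it is the step I expect to be the main obstacle. A direct calculation locates both labels in column $b=(s+d-3)/2$: the positive label $3s/2$ sits at $(-(d-3)/2,b)$, and the negative label $-3(s+2d)/2$ sits at $(-(d+3)/2,b)$. The upper bound $f(b)\le-(d-1)/2$ excludes a positive-label bead at $(-(d-3)/2,b)$ exactly as in case (b), giving $3s/2\notin\la$. For $3(s+2d)/2$ I would argue by contradiction: a bead at the negative-label position $(-(d+3)/2,b)$ would, via Lemma \ref{lem:beads}(b), force beads on every row from there up to $(r(b)-1,b)$, hence by Lemma \ref{lem:beads}(c) rule out all positive-label beads in column $b$. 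Reading $f(b)$ in this ``no positive-label bead'' regime then gives $f(b)=q(b)-1\le-(d+5)/2$, contradicting the lower bound $f(b)\ge-(d+3)/2$ supplied by Proposition \ref{prop:csyd_f}(c).

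The delicate point, and the reason the lower bound on $f((s+d-3)/2)$ must be included in Proposition \ref{prop:csyd_f}(c), is precisely this two-regime behavior of $f$: when positive-label beads exist $f(j)$ is the topmost positive-label bead row, while otherwise $f(j)$ is one less than the topmost negative-label bead row. The exclusion of $3s/2$ uses the former regime and the exclusion of $3(s+2d)/2$ uses the latter, so both ends of the stated range for $f(b)$ are needed; keeping the bead-pattern bookkeeping straight through Lemma \ref{lem:beads} (a)--(c) is the part that requires the most care.
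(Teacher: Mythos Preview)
Your proposal is correct and follows essentially the same approach as the paper: reduce to Proposition~\ref{prop:CSYD}(b) via Proposition~\ref{prop:barinv}, then do a parity case analysis locating the positions labeled by $\pm 3m/2$ in the $(\overline{s+d},d)$-abacus and showing no bead can sit there given the range of $f$. The paper argues case~(c) more tersely, simply observing from Proposition~\ref{prop:csyd_f}(c) that the only beads in column $(s+d-3)/2$ can be at the positions labeled $-(s+4d)/2$ or $(s-2d)/2$, whereas you unpack this via Lemma~\ref{lem:beads} and the two regimes of $f$; both arrive at the same conclusion. One minor point: your symbol $q(b)$ is undefined---presumably you mean the lowest bead row in the negative-label regime---so you should either define it or phrase that step directly in terms of the definition of $f$.
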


\begin{proof}
Similar to Proposition \ref{prop:dd_inverse}, it is sufficient to show that $\la$ satisfies Proposition \ref{prop:CSYD} (b). Also, we do not need to check the additional condition when $s$ is odd and $d$ is even. If $s$ and $d$ are both odd, by Proposition \ref{prop:csyd_f} (b), there is at most one bead labeled by $(s+d)/2$ in column $(s+d)/2$. Since no columns but the column $(s+d)/2$ has labels whose absolute values are $(3s+3d)/2$, it follows that $(3s+3d)/2 \notin \la$. If $s$ is even and $d$ is odd, then only the column $(s+d-3)/2$ has positions labeled by $-(3s+6d)/2$ and $3s/2$. Since there is at most one bead being labeled by $(-s+4d)/2$ or $(s-2d)/2$ in column $(s+d-3)/2$ by Proposition \ref{prop:csyd_f} (c), we have $3s/2,(3s+6d)/2 \notin \la$. It completes the proof.
\end{proof}

Similarly, we give a path interpretation for $(s,s+d,s+2d)$-CSYDs.

\begin{thm}\label{thm:csyd3}
For coprime positive integers $s$ and $d$, there is a bijection between the sets $\mathcal{CS}_{(s,s+d,s+2d)}$ and
\begin{enumerate}
\item[(a)]
$\mathcal{F}(\frac{s+d+1}{2},-\frac{d}{2} \,;\, \{U\},\{D\})$ if $s$ is odd and $d$ is even;
\item[(b)] $\mathcal{F}(\frac{s+d+2}{2},-\frac{d+1}{2} \,;\, \{U\},\{FD,DD,U\})$ if both $s$ and $d$ are odd;
\item[(c)] 
$\mathcal{F}(\frac{s+d+1}{2},-\frac{d+1}{2} \,;\, \{U\},\{UU,DD\})$ if $s$ is even and $d$ is odd.
\end{enumerate}
\end{thm}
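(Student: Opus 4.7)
The plan is to mirror the approach of Theorem \ref{thm:barcore}: for each $(s,s+d,s+2d)$-CSYD $\la$, take its $(\overline{s+d},d)$-abacus function $f$, extend it by one more canonical value, and read off the free Motzkin path $P = P_1 P_2 \cdots$ with $P_j = (1, f(j) - f(j-1))$. The inverse direction is furnished by the preceding proposition, which guarantees that any function $f$ satisfying Propositions \ref{prop:f_initial} and \ref{prop:csyd_f} produces a strict partition that is an $(s,s+d,s+2d)$-CSYD.

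Parts (a) and (b) should turn out to be immediate. Comparing Proposition \ref{prop:csyd_f}(a) and (b) with Proposition \ref{prop:barf}(a) and (b), the endpoint constraints on $f$ for $(s,s+d,s+2d)$-CSYDs are literally the same as those for $(\ols{s\phantom{d}},\overline{s+d},\overline{s+2d})$-cores. Intuitively, in part (a) each of $s,\, s+d,\, s+2d$ is odd, so the additional CSYD condition $3s_i/2\notin\la$ is vacuous; in part (b) the only relevant extra condition $(3s+3d)/2\notin\la$ is already forced by the $\overline{s+2d}$-core condition, as shown inside the proof of Proposition \ref{prop:barf}(b). So parts (a) and (b) reduce directly to the corresponding statements in Theorem \ref{thm:barcore}, yielding exactly the same target path sets.

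The main work, and the main obstacle, lies in part (c), where $s$ is even and $d$ is odd. Here Proposition \ref{prop:csyd_f}(c) adds a new constraint on $f(\frac{s+d-3}{2})$ beyond the bar-core constraint on $f(\frac{s+d-1}{2})$, both of which must now lie in $\{-\frac{d+3}{2},\, -\frac{d+1}{2},\, -\frac{d-1}{2}\}$. Following the pattern of Theorem \ref{thm:barcore}(c), I would set $f(\frac{s+d+1}{2}) := -\frac{d+1}{2}$, producing a path of type $(\frac{s+d+1}{2},-\frac{d+1}{2})$. The ``no initial $U$'' restriction is automatic from Proposition \ref{prop:f_initial}(a). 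For the ``no terminal $UU$ or $DD$'' restriction, the plan is a short case analysis: enumerate the seven pairs of admissible values $(f(\frac{s+d-3}{2}), f(\frac{s+d-1}{2}))$ that differ by at most one (by Proposition \ref{prop:f_initial}(b)), compute the corresponding two-step endings $FU$, $UF$, $DU$, $FF$, $UD$, $DF$, $FD$, and verify that the two remaining endings $UU$ and $DD$ would require $f(\frac{s+d-3}{2}) \in \{-\frac{d+5}{2},\, -\frac{d-3}{2}\}$, which falls outside the admissible set.

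Once the case analysis in (c) is checked, the bijection $\la \mapsto P$ follows directly in all three parts. The hard part will be keeping track of the combined last-two-step constraints in (c) and confirming that every forbidden pair of values forces an ending outside the admissible set while every admissible pair lifts to a genuine CSYD; the rest is an essentially mechanical translation between the abacus and path pictures already developed in Section \ref{sec:bar}.
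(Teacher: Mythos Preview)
Your proposal is correct and follows essentially the same approach as the paper. For parts (a) and (b) you reduce to Theorem~\ref{thm:barcore} exactly as the paper does; for part (c) the paper organizes the case analysis by first fixing $f(\tfrac{s+d-1}{2})$ at each of its three admissible values, obtaining the three intermediate sets $\mathcal{F}(\tfrac{s+d-1}{2},-\tfrac{d-1}{2}\,;\,\{U\},\{D\})$, $\mathcal{F}(\tfrac{s+d-1}{2},-\tfrac{d+1}{2}\,;\,\{U\},\emptyset)$, $\mathcal{F}(\tfrac{s+d-1}{2},-\tfrac{d+3}{2}\,;\,\{U\},\{U\})$, and then appending the final step to $(\tfrac{s+d+1}{2},-\tfrac{d+1}{2})$, while you enumerate the seven admissible pairs $(f(\tfrac{s+d-3}{2}),f(\tfrac{s+d-1}{2}))$ directly and read off the forbidden endings $UU,DD$ --- these are two presentations of the same finite check.
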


\begin{proof}
Parts (a) and (b) follow from Theorem \ref{thm:barcore}. Now we need to construct a bijection for the set $\mathcal{CS}_{(s,s+d,s+2d)}$ when $s$ is even and $d$ is odd. Until the second last step of the corresponding free Motzkin paths, the paths should be in one of the following sets:
\begin{align*}
&\mathcal{F}\left((s+d-1)/2,-(d-1)/2 \,;\, \{U\},\{D\}\right),\\ 
&\mathcal{F}\left((s+d-1)/2,-(d+1)/2 \,;\, \{U\},\emptyset\right),\\ 
&\mathcal{F}\left((s+d-1)/2,-(d+3)/2 \,;\, \{U\},\{U\}\right).
\end{align*}
By adding the end point of the free Motzkin path, we get the statements. 

\end{proof}

\subsection{Enumerating $(s,s+d,s+2d)$-core partitions}
In this subsection we give a proof of Theorem~\ref{thm:unifying}. We begin with a useful lemma.

\begin{lem}\label{lem:path1}
Let $a$ and $b$ be positive integers. 
\begin{enumerate}
\item[(a)] The total number of free Motzkin paths of type $(a+b,-b)$ for which starts with either a down or a flat step is given by
\[
|\mathcal{F}(a+b,-b \,;\, \{U\},\emptyset)|=\sum_{i=0}^{a}\binom{a+b-1}{\lfloor i/2 \rfloor, b+\lfloor (i-1)/2\rfloor, a-i}.
\]
\item[(b)] The total number of free Motzkin paths of type $(a+b,-b)$ for which starts with either a down or a flat step and ends with either a up or a flat step is
\[
|\mathcal{F}(a+b,-b \,;\, \{U\},\{D\})|=\sum_{i=0}^{a-1}\binom{a+b-2}{\lfloor i/2 \rfloor}\binom{a+b-1-\lfloor i/2 \rfloor}{a-i-1}.
\]
\item[(c)] The total number of free Motzkin paths of type $(a+b,-b)$ for which starts with either a down or a flat step and ends with either a down or a flat step is
\[
|\mathcal{F}(a+b,-b \,;\, \{U\},\{U\})|=\sum_{i=0}^{a}\binom{a+b-2}{\lfloor i/2 \rfloor}\binom{a+b-1-\lfloor i/2 \rfloor}{a-i}.
\]
\end{enumerate}
\end{lem}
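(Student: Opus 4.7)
The plan is to enumerate these restricted free Motzkin paths by conditioning on the admissible first (and, where applicable, last) step, and then to reconcile the resulting trinomial sums with the displayed product-of-binomials formulas via a Pascal expansion.

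For (a), the first step of any $P\in\mathcal{F}(a+b,-b\,;\,\{U\},\emptyset)$ is either $F$ or $D$. Removing this step yields a free Motzkin path of type $(a+b-1,-b)$ or $(a+b-1,-b+1)$, respectively, and with $u$ middle up-steps each is counted by a single trinomial coefficient $\binom{a+b-1}{u,u+b,a-2u-1}$ or $\binom{a+b-1}{u,u+b-1,a-2u}$. Indexing the $F$-case by odd $i=2u+1$ and the $D$-case by even $i=2u$ fuses the two sums into the displayed formula; the floor expressions $\lfloor i/2\rfloor$ and $\lfloor(i-1)/2\rfloor$ are exactly what is needed to track the parity, with the convention that a trinomial with a negative entry vanishes.

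For (b) I would condition on both endpoints: the first step is in $\{F,D\}$ and the last in $\{F,U\}$, giving four cases. Removing both endpoints leaves a middle of type $(a+b-2,-b)$ (from $FF$ and $DU$), $(a+b-2,-b-1)$ (from $FU$), or $(a+b-2,-b+1)$ (from $DF$); after summing the trinomial counts over $u$ the total becomes
\[
\sum_{u}\!\left[\binom{a+b-2}{u,u+b-1,a-2u-1}+2\binom{a+b-2}{u,u+b,a-2u-2}+\binom{a+b-2}{u,u+b+1,a-2u-3}\right].
\]
To identify this with the claimed product-of-binomials form, I would invoke Pascal's identity in the guise
\[
\binom{a+b-2}{k}\binom{a+b-1-k}{j}=\binom{a+b-2}{k,j,a+b-2-k-j}+\binom{a+b-2}{k,j-1,a+b-1-k-j},
\]
then set $k=\lfloor i/2\rfloor$ and $j=a-i-1$ and split on the parity of $i$. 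The first trinomial reproduces the $DF$-contribution (even $i$) and a $DU/FF$-contribution (odd $i$); the second reproduces the complementary $DU/FF$-contribution (even $i$) and the $FU$-contribution (odd $i$). Summing over $0\le i\le a-1$ recovers the enumeration.

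Part (c) follows by the same template, the only change being that the last-step set is now $\{F,D\}$: the four cases $FF,FD,DF,DD$ produce middle paths of types $(a+b-2,-b)$, $(a+b-2,-b+1)$ twice, and $(a+b-2,-b+2)$, and the same Pascal expansion with $j$ replaced by $a-i$ delivers the claimed formula. The main obstacle throughout is not the individual enumerations but this final bookkeeping: verifying that after the parity split the ranges of $u$ align with the four conditioned cases and that the doubled middle trinomial correctly absorbs the two symmetric cases ($FF$ with $DU$ in (b), and $FD$ with $DF$ in (c)). Once this matching is settled, the remaining computations are routine.
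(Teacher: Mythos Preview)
Your argument for (a) is essentially identical to the paper's: condition on the first step, remove it, record the trinomial count of the remainder, and reindex by parity of $i$.

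For (b) and (c) your route is correct but differs from the paper's. You condition on \emph{both} endpoints, producing four middle-path types and a sum of four trinomial families that must then be reassembled via a Pascal expansion split by parity. The paper instead conditions only on the \emph{last} step and observes that what remains is exactly a path counted by part~(a): for (b),
\[
|\mathcal{F}(a+b,-b\,;\,\{U\},\{D\})|
=|\mathcal{F}(a+b-1,-b\,;\,\{U\},\emptyset)|+|\mathcal{F}(a+b-1,-b-1\,;\,\{U\},\emptyset)|,
\]
and then applies the formula from (a) to each summand and combines the two via a single Pascal step. This avoids re-doing the first-step analysis and cuts the four-case bookkeeping (and the doubled middle term you flag as the ``main obstacle'') down to two cases; the trade-off is that your approach is self-contained within each part, while the paper's leans on (a) as a lemma.
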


\begin{proof}
\begin{enumerate}
\item[(a)] The number of free Motzkin paths of type $(a+b,-b)$ having $k$ up steps (so that it has $b+k$ down steps and $a-2k$ flat steps) for which starts with a down (resp. flat) step is $\binom{a+b-1}{k,b+k-1,a-2k}$ (resp. $\binom{a+b-1}{k,b+k,a-(2k+1)}$). Hence, the total number of free Motzkin paths of type $(a+b,-b)$ for which starts with either a down or a flat step is 
\[
\sum_{k=0}^{\lfloor a/2 \rfloor}\binom{a+b-1}{k,b+k-1,a-2k}
+\sum_{k=0}^{\lfloor (a-1)/2 \rfloor}\binom{a+b-1}{k,b+k,a-(2k+1)},
\]
which can be written as in the statement.
\item[(b)] Note that
$|\mathcal{F}(a+b,-b\,;\,\{U\},\{D\})|$ is equal to the sum of the two values, which are given by (a), 
\begin{align*}
|\mathcal{F}(a+b-1,-b \,;\, \{U\},\emptyset)|&=\sum_{i=0}^{a-1}\binom{a+b-2}{\lfloor i/2 \rfloor, b+\lfloor (i-1)/2\rfloor, a-i-1},\\
|\mathcal{F}(a+b-1,-b-1 \,;\, \{U\},\emptyset)|&=\sum_{i=0}^{a-2}\binom{a+b-2}{\lfloor i/2 \rfloor, b+\lfloor (i+1)/2\rfloor, a-i-2}.
\end{align*}
Hence, $|\mathcal{F}(a+b,-b\,;\,\{U\},\{D\})|$ is equal to 
\[
\sum_{i=0}^{a-1}\binom{a+b-2}{\lfloor i/2 \rfloor}\left(\binom{a+b-2-\lfloor i/2 \rfloor}{a-i-1}
+\binom{a+b-2-\lfloor i/2 \rfloor}{a-i-2}\right),
\]
which can be written as in the statement.
\item[(c)] Similar to (b), the formula follows.
\end{enumerate}
\end{proof}

For coprime positive integers $s$ and $d$, let $\mathfrak{sc}$, $\mathfrak{bc}$, $\mathfrak{cs}$, and $\mathfrak{dd}$ denote the cardinalities of the sets $\mathcal{SC}_{(s,s+d,s+2d)}$, $\mathcal{BC}_{(s,s+d,s+2d)}$, $\mathcal{CS}_{(s,s+d,s+2d)}$, and $\mathcal{DD}_{(s,s+d,s+2d)}$, respectively.

\begin{proof}[Proof of Theorem~\ref{thm:unifying}.]
\begin{enumerate}
\item[(a)] Recall that for odd $s$ and even $d$, the three sets $\mathcal{BC}_{(s,s+d,s+2d)}$,  $\mathcal{DD}_{(s,s+d,s+2d)},$ and $\mathcal{CS}_{(s,s+d,s+2d)}$ are actually the same by Remark~\ref{rmk:oddoddodd}.  By Theorem \ref{thm:barcore} (a), the set $\mathcal{BC}_{(s,s+d,s+2d)}$ is bijective with $\mathcal{F}((s+d+1)/2,-d/2 \,;\, \{U\},\{D\}).$
By setting $a=(s+1)/2$ and $b=d/2$ in Lemma~\ref{lem:path1}~(b), we obtain a desired formula.

\item[(b)] 
For odd numbers $s$ and $d$, we have $\mathfrak{bc}=\mathfrak{cs}$ by Theorems \ref{thm:barcore} (b) and \ref{thm:csyd3} (b).
By Lemma~\ref{lem:path1}~(a), we get

\begin{align*}
&\left|\mathcal{F}\left(\frac{s+d}{2},-\frac{d+1}{2} \,;\, \{U\},\emptyset\right)\right|=\sum_{i=0}^{(s-1)/2}\binom{(s+d-2)/2}{\lfloor i/2 \rfloor, \lfloor (d+i)/2\rfloor, (s-1)/2-i}, \\
&\left|\mathcal{F}\left(\frac{s+d}{2},-\frac{d-1}{2} \,;\, \{U\},\{F,D\}\right)\right|=\left|\mathcal{F}\left(\frac{s+d-2}{2},-\frac{d+1}{2} \,;\, \{U\},\emptyset\right)\right|\\
& \hspace{54.5mm} =\sum_{i=0}^{(s-3)/2}\binom{(s+d-4)/2}{\lfloor i/2 \rfloor, \lfloor (d+i)/2\rfloor, (s-3)/2-i}.
\end{align*}
As in the proof of Theorem \ref{thm:barcore}, $\mathfrak{bc}$ is equal to the sum of these two terms, which can be written as follows. 
\[
\mathfrak{bc}=\mathfrak{cs}=\sum_{i=0}^{(s-1)/2}\binom{(d-1)/2+i}{\lfloor i/2 \rfloor}\left( \binom{(s+d-2)/2}{(d-1)/2+i} + \binom{(s+d-4)/2}{(d-1)/2+i}\right).
\]

\item[(c)] By Theorem \ref{thm:barcore} (c), the set $\mathcal{BC}_{(s,s+d,s+2d)}$ is bijective with the set $\mathcal{F}((s+d+1)/2,-(d+1)/2 \,;\, \{U\},\emptyset)$ for even $s$ and odd $d$.
By Lemma~\ref{lem:path1}~(a), 
\[
\mathfrak{bc}=\sum_{i=0}^{s/2}\binom{(s+d-1)/2}{\lfloor i/2 \rfloor, (d+1)/2+\lfloor (i-1)/2\rfloor, s/2-i}.
\]
Now we consider the set $\mathcal{CS}_{(s,s+d,s+2d)}$. As in the proof of Theorem \ref{thm:csyd3},  $\mathfrak{cs}=|\mathcal{F}_1|+|\mathcal{F}_2|+|\mathcal{F}_3|$, where
\begin{align*}
\mathcal{F}_1&\coloneqq\mathcal{F}\left(\frac{s+d-1}{2},-\frac{d-1}{2} \,;\, \{U\},\{D\}\right)\!,\\ \mathcal{F}_2&\coloneqq\mathcal{F}\left(\frac{s+d-1}{2},-\frac{d+1}{2} \,;\, \{U\},\emptyset\right)\!,\\ \mathcal{F}_3&\coloneqq\mathcal{F}\left(\frac{s+d-1}{2},-\frac{d+3}{2} \,;\, \{U\},\{U\}\right)\!.
\end{align*}
From Lemma~\ref{lem:path1}, we obtain that
\begin{align*}
|\mathcal{F}_2|&=\sum_{i=0}^{(s-2)/2}\binom{(s+d-3)/2}{\left\lfloor i/2 \right\rfloor} \binom{(s+d-3)/2-\left\lfloor i/2 \right\rfloor}{(s-2)/2-i},\\
|\mathcal{F}_1|+|\mathcal{F}_3|&=\sum_{i=0}^{(s-2)/2}\binom{(s+d-5)/2}{\left\lfloor i/2 \right\rfloor} \binom{(s+d-1)/2-\left\lfloor i/2 \right\rfloor}{(s-2)/2-i},
\end{align*}
which completes the proof.

\item[(d)] Theorem \ref{thm:dd3} (b) and (c), and Lemma \ref{lem:path1} give an expression of $\mathfrak{dd}$ depending on the parity of $s$. By manipulating binomial terms, one can combine two expressions into one.

\end{enumerate}
\end{proof}

\begin{rem} 
From the path constructions, we compare the sizes among them.

\begin{enumerate}
\item[(a)] If $s$ is odd and $d$ is even, then $\mathfrak{sc}<\mathfrak{bc}=\mathfrak{cs}=\mathfrak{dd}$.
\item[(b)] If both $s$ and $d$ are odd, then $\mathfrak{sc}=\mathfrak{dd}<\mathfrak{bc}=\mathfrak{cs}$.
\item[(c)] If $s$ is even and $d$ is odd, then $\mathfrak{dd}<\mathfrak{cs}<\mathfrak{sc}=\mathfrak{bc}$.
\end{enumerate}
\end{rem}

\section*{Acknowledgments}
Hyunsoo Cho was supported by the National Research Foundation of Korea(NRF) grant funded by the Korea government(MSIT) (No. 2021R1C1C2007589) and the Ministry of Education (No. 2019R1A6A1A11051177). JiSun Huh was supported by the National Research Foundation of Korea(NRF) grant funded by the Korea government(MSIT) (No. 2020R1C1C1A01008524). Jaebum Sohn was supported by the National Research Foundation of Korea (NRF) grant funded by the Korea government (MSIT) (NRF-2020R1F1A1A01066216).








\end{document}